\crefname{remark}{Remark}{Remarks}
\crefname{property}{Property}{Properties}
\crefname{hypothesis}{Hypothesis}{Hypotheses}
\crefname{assumption}{Assumption}{Assumptions}
\crefname{problem}{Problem}{Problems}
\title{Parabolic hysteresis problems revisited: Finite element error analysis and convergent Newton-type solvers \thanks{Submitted to the editors \today.
\funding{This work was funded by the National Natural Science Foundation of China under contract no.~12371437 and the Beijing Natural Science Foundation under contract no.~Z240001.}}}
\author{Shu Xu \thanks{School of Mathematical Sciences, Peking University, Beijing 100871, China
  (\email{mathxushu@pku.edu.cn}).}
\and Liqun cao\thanks{Corresponding author. LSEC, NCMIS, Institute of Computational Mathematics and Scientific/Engineering Computing, Academy of Mathematics and Systems Science, Chinese Academy of
Sciences, Beijing 100190, China 
  (\email{clq@lsec.cc.ac.cn}).}
}
\begin{document}

\maketitle

\begin{abstract}
  Numerical investigations of partial differential equations with hysteresis have largely focused on simulations, leaving numerical error analysis unexplored and relying mainly on derivative-free nonlinear solvers.
  This work establishes rigorous finite element error estimates for the backward Euler fully discrete scheme applied to semilinear and quasilinear parabolic equations involving continuous hysteresis operators.
  To efficiently handle the inherent nonsmoothness of the resulting nonlinear algebraic systems, we develop a damped smoothing Newton solver under a general condition on the smoothing approximation, ensuring global convergence together with local Q-quadratic convergence. Numerical experiments confirm the theoretical convergence rates for semilinear problems, while showing higher-than-predicted orders for quasilinear ones. The robustness and efficiency of the proposed solver are further demonstrated in comparison with existing methods.
\end{abstract}

\begin{keywords}
  hysteresis, finite element, error analysis, piecewise smoothness, Newton method
\end{keywords}

\begin{MSCcodes}
  47J40, 65M60, 65J15, 49M15
\end{MSCcodes}
 
\section{Introduction}\label{sec:intro}
  Hysteresis is a ubiquitous phenomenon in physics, chemistry,  biology, and engineering \cite{mayergoyz_mathematical_2003,bertotti_science_2006,nooriHysteresisPhenomenaBiology2014, morro_mathematical_2023}, mathematically characterized as a rate-independent memory effect \cite{visintinDifferentialModelsHysteresis1994,brokateHysteresisPhaseTransitions1996}. 
  Partial differential equations incorporating hysteresis are indispensable for accurately capturing system responses in diverse applications, including electromagnetic loss calculations \cite{bermudezElectromagneticComputationsPreisach2017,HANSER2021852}, elasto-plastic deformation \cite{visintin_hysteresis_2002,lallart_modeling_2011}, and phase-transition-driven chemical or biological processes \cite{hoppensteadt_pattern_1980,gurevich_reaction-diffusion_2013}.  
  Accurately simulating such systems is therefore of both theoretical and practical significance.  

  Hysteresis is typically modeled through constitutive relations represented by hysteresis operators, variational inequalities, or differential inclusions \cite{visintin_ten_2014}. 
  In this work, we focus on parabolic equations involving hysteresis operators of the form
  \begin{gather}
    \frac{\partial }{\partial t} u + \mathcal{A} u + \mathcal{W}\qty(u, w^{0}) = f, \label{eq:preli_semi_pde_strong}\\
    \frac{\partial }{\partial t} \qty[u+\mathcal{W}\qty(u, w^{0})]  + \mathcal{A} u  = f;\label{eq:preli_quasi_pde_strong}
  \end{gather}
  here $ \mathcal{A}$ is a bounded, self-adjoint, second-order elliptic operator, $f$ is a given source term, $w^0$ denotes the initial memory state, and $\mathcal{W}$ is a space-distributed hysteresis operator
  \begin{equation}\label{eq:intro_map_space}
    L^p\left(\Omega ; C\qty[0, T]\times \mathbb{R}\right) \rightarrow L^p\left(\Omega ; C\qty[0, T]\right), \quad p \geq 1.
  \end{equation}
  The defining feature of hysteresis is its memory dependence: at any instant $t$, the output value $\left[\mathcal{W}\qty(u, w^{0})\right](t)$ depends not only on the current input $u(t)$, but also on its entire history $u(t')$ for $0\leq t'<t$.
  As noted in \cite{little_semilinear_1994,showalter1996parabolic}, such nonlocal-in-time nonlinearities pose substantial mathematical and numerical challenges.
   
  Parabolic equations with hysteresis are relevant to a broad spectrum of applications. The semilinear equation \cref{eq:preli_semi_pde_strong} describes, for instance, heat conduction with thermostatic hysteresis control \cite{visintin_evolution_1986,tsuzuki_existence_2015,tsuzuki_existence_2015-1,brokate_weak_2019}, or diffusion-reaction systems where diffusive and non-diffusive species interact under hysteretic laws \cite{hoppensteadt_pattern_1980,jager_diffusion_1982,gurevich_uniqueness_2012,gurevich_reaction-diffusion_2013}. The quasilinear equation \cref{eq:preli_quasi_pde_strong} arises as a simplified form of Maxwell's equations with hysteresis under the eddy-current approximation and certain geometric assumptions, such as symmetry or dimensional reduction \cite{dupre_numerical_1996,vankeerNumericalMethod2D1996,vankeerComputationalMethodsEvaluation1998,bermudezElectromagneticComputationsPreisach2017}. In this study, we restrict attention to continuous hysteresis operators, which are general enough to encompass a wide range of models of practical relevance. For comprehensive theoretical foundations, we refer the reader to \cite{krasnoselskiiSystemsHysteresis1989,visintinDifferentialModelsHysteresis1994,krejčí1996hysteresis,brokateHysteresisPhaseTransitions1996}.    

  Mathematical analysis of parabolic partial differential equations with hysteresis dates back to the 1980s and is now relatively well established. Under general assumptions, the existence of solutions to \cref{eq:preli_semi_pde_strong} and \cref{eq:preli_quasi_pde_strong} with appropriate initial and boundary conditions was proved in \cite{visintin_preisach_1984,visintinDifferentialModelsHysteresis1994,visintin_model_1982}. These analytical frameworks have also been successfully applied to specific combinations of differential and hysteresis operators encountered in applications \cite{bermudez_mathematical_2014,bermudezElectromagneticComputationsPreisach2017,bermudez_mathematical_2020}. 
  However, uniqueness of solutions remains technically delicate unless additional assumptions are imposed. Global Lipschitz continuity is often required to establish well-posedness of \cref{eq:preli_semi_pde_strong}, either via $L^2$-based techniques \cite{visintinDifferentialModelsHysteresis1994} or $L^\infty$-based approaches \cite{verdiNumericalApproximationHysteresis1985,tsuzuki_existence_2015}. The quasilinear case \cref{eq:preli_quasi_pde_strong} poses even greater challenges and is generally well-posed only for specific play-type hysteresis operators with the aid of the Hilpert inequality \cite{hilpert1989uniqueness}. In addition, for generalized Prandtl-Ishlinskiĭ operators of play type, an alternative semigroup approach can be used for both \eqref{eq:preli_semi_pde_strong} and \eqref{eq:preli_quasi_pde_strong} by a suitable reformulation \cite{visintinDifferentialModelsHysteresis1994,little_semilinear_1994,kopfova_nonlinear_2007}. 

  In contrast, numerical studies of parabolic hysteresis equations remain relatively active. Implicit Euler finite element schemes are the standard approach for temporal-spatial discretization \cite{verdiNumericalApproximationHysteresis1985,verdiNumericalApproximationPreisach1989,bermudez_mathematical_2014,bermudezElectromagneticComputationsPreisach2017,bermudez_mathematical_2020}, whose stability and convergence were first established in \cite{verdiNumericalApproximationHysteresis1985,verdiNumericalApproximationPreisach1989}.  Higher-order time-stepping schemes such as the Crank-Nicolson method have also been explored \cite{vankeerNumericalMethod2D1996,vankeerComputationalMethodsEvaluation1998}, but they require modified temporal integration to account for non-monotone hysteretic input within each time step. Linearized schemes have been proposed to enhance computational efficiency \cite{verdiNumericalApproximationPreisach1989,verdi_numerical_1994}, combining efficient linear solvers with pointwise nonlinear corrections. Although numerical convergence rates have been empirically observed \cite{verdiNumericalApproximationHysteresis1985,bermudezElectromagneticComputationsPreisach2017}, such results remain limited to low-dimensional (1D-2D) settings and lack rigorous theoretical justification.
  
  Numerical error analysis has long been recognized as particularly delicate \cite{brokate1990some}, and to date, no substantial theoretical progress has been achieved. The difficulties arise from the nonstandard Lipschitz continuity and the lack of strong monotonicity introduced by the space-distributed hysteresis operator. Compared with Volterra integro-differential operators \cite{renardy_mathematical_1987,fabrizio_mathematical_1992} that also encode memory effects, hysteresis operators are Lipschitz continuous in the sense of \eqref{eq:intro_map_space} rather than as mappings
  \[
      L^1\left(0,T;L^2\left(\Omega\right)\right) \rightarrow  L^\infty\left(0,T;L^2\left(\Omega\right)\right).
  \]
  Consequently, classical error estimates for partial integro-differential equations (PIDEs) of type \eqref{eq:preli_semi_pde_strong} \cite{le_roux_numerical_1989,cannon_non-classicalh1_1988,cannon_priori_1990} do not apply. 
  For problems of type \eqref{eq:preli_quasi_pde_strong}, the challenges are twofold. 
  First, in PIDEs, the memory operator or its time derivative usually takes the form of a convolution with an integrable kernel \cite{hornung_diffusion_1990,peszynska_finite_1996}, enabling an equivalent semilinear reformulation \eqref{eq:preli_semi_pde_strong} \cite{lin_semi-discrete_1998}. Such equivalence, however, breaks down for general hysteresis operators. 
  Second, full discretization yields nonlinear systems with spatially varying nonlinearities by pointwisely distinct input histories, thereby destroying the monotonicity arguments \cite{epperson_finite_1984,elliott_error_1987,verdi_numerical_1994,rulla_optimal_1996}.
  To the best of our knowledge, finite element error estimates for such systems have not been available.
  The first objective of this paper is thus to close this theoretical gap.
  
  The piecewise smoothness of fully discrete systems has also hindered the development of efficient nonlinear solvers for a long time \cite{verdiNumericalApproximationPreisach1989,miklos_kuczmann_finite_2008,peszynskaApproximationHysteresisFunctional2021}. Existing studies have mainly relied on derivative-free methods \cite{brent_algorithms_1973,lions_splitting_1979,bermudezDualityMethodsSolving1981,white_parallel_1986,conn_introduction_2009}, such as nonlinear Gauss-Seidel iteration \cite{verdi_numerical_1994} and the dual iterative algorithm \cite{ bermudezElectromagneticComputationsPreisach2017}. 
  Recently, semismooth Newton methods \cite{hintermuller2010semismooth,ulbrich_semismooth_2011} have been applied to handle the nonsmoothness arising from hysteresis memory \cite{bermudez_mathematical_2020,peszynskaApproximationScalarConservation2020,peszynskaApproximationHysteresisFunctional2021}, exhibiting quadratic convergence near the solution \cite{kojima_extension_1986,kummer_newtons_1992,qi_nonsmooth_1993}.  Nevertheless, convergence failures have occasionally been reported \cite{peszynskaApproximationScalarConservation2020}, necessitating globalization strategies. Unfortunately, standard line-search globalization guarantees convergence only when the merit function is continuously differentiable \cite{rheinboldt_iterative_1987,dennis_numerical_1996,de_luca_semismooth_1996}.
  The Jacobian smoothing Newton method \cite{chen_global_1998,qiSurveyNonsmoothEquations1999} provides a promising alternative that circumvents these difficulties. However, selecting or designing smoothing approximations \cite{arandiga_interpolation_2005,mazroui_simple_2005,lipman_approximating_2010,bagirov_hyperbolic_2013,wu_smoothing_2015,yilmaz_new_2019} to ensure well-definedness and convergence remains a nontrivial task. 
  To this end, we propose a general smoothing framework for the fully discrete systems, under which the corresponding smoothing Newton method with a backtracking line search is proved to converge globally with local Q-quadratic convergence, achieving both robustness and efficiency.

  The main contributions of this paper are fourfold.
  First, we derive finite element error estimates for backward-Euler fully discrete schemes applied to (i) semilinear parabolic problems with general continuous hysteresis operators, and (ii) quasilinear parabolic problems with linear play-type hysteresis, including a family of Preisach operators. To the best of our knowledge, these constitute the first rigorous finite element error results in this setting.
  Second, we introduce a general smoothing framework to treat the piecewise smoothness that arises in the fully discrete systems. The framework is sufficiently flexible to cover a broad class of smoothing Newton methods; under the framework we prove both global convergence and local quadratic convergence.
  Third, we present a concrete, practically implementable smoothing Newton solver built on an arc-based smoothing strategy. The solver is accompanied by a full theoretical justification along the entire algorithmic path, thereby bridging the gap between abstract smoothing framework and implementable algorithms.
  Finally, we validate the theoretical convergence rates by numerical experiments in spatial dimensions $1\leq N \leq 3$ and we compare the proposed solver with several existing methods. To our knowledge, such a comprehensive numerical study has not been previously reported.
  
  The remainder of the paper is organized as follows.
  \Cref{sec:preli} reviews the fundamentals of parabolic hysteresis problems.
  \Cref{sec:fem} develops the finite element error estimates for the fully discrete formulations.
  \Cref{sec:solver} introduces the proposed smoothing framework and establishes the convergence of the associated smoothing Newton method, including a practical instantiation.
  In \Cref{sec:experiments}, the theoretical results are validated through numerical experiments, and the performance of the solver is assessed.
 
\section{Parabolic hysteresis problems}\label{sec:preli}
  This section revisits the mathematical formulation of parabolic equations involving hysteresis. We introduce the functional framework for continuous hysteresis operators and the weak formulations, together with essential assumptions that underpin the finite element error analysis and nonlinear solver design.

  \subsection{Continuous hysteresis operators}\label{subsec:preli_Op} 
  Continuous hysteresis refers to a type of hysteresis in which a continuous input generates a continuous output, thereby producing a continuous hysteresis loop.
  Formally, we consider a general operator
  \begin{equation}\label{eq:preli_Op_operator}
    \mathcal{F}\colon C[0, T]\times \mathbb{R} \rightarrow C[0, T], \quad \left(u,w^0\right) \mapsto w=\mathcal{F}\left(u, w^0\right),
  \end{equation} 
  where $w^0$ represents the initial memory state of the system. $\mathcal{F}$ is a hysteresis operator if it satisfies two fundamental properties:
  \begin{property}[Rate independence]\label{asp:preli_Op_RateIndependence}
    $\forall\left(u, w^0\right) \in C[0,T]\times \mathbb{R}$, if $s\colon [0, T] \rightarrow [0, T]$ is a continuous increasing function satisfying $s(0)=0$ and $s\left(T\right)=T$, then 
    \[
      \left[\mathcal{F}\left(u \circ s, w^0\right)\right](t)=\left[\mathcal{F}\left(u, w^0\right)\right](s(t)), \quad \forall t \in [0, T]. 
    \] 
  \end{property}
  \begin{property}[Causality/Volterra]\label{asp:preli_Op_Causality}
    $\forall u_1, u_2 \in  C[0, T]$, $w^0\in \mathbb{R}$, $t \in[0, T]$, if $u_1=u_2$ in $[0, t]$, then $\qty[\mathcal{F}(u_1,w^0)]\qty(t)=[\mathcal{F}(u_2,w^0)]\qty(t)$.
  \end{property}
  \begin{remark}\label{rm:preli_Op_state}
    For clarity, we restrict attention to operators whose state is completely characterized by the pair $(u, w) \in \mathbb{R}^2$ at each instant. More general formulations allow the state to be represented by a variable $\xi$ in a metric space $X$ (see \cite[p. 61]{visintinDifferentialModelsHysteresis1994}), leading to $\mathcal{F}\colon C[0, T]\times X \rightarrow C[0, T]$ and $w(t)=\left[\mathcal{F}\left(u, \xi^0\right)\right](t)$, where $\xi^0 \in X$ encodes the initial memory configuration.
  \end{remark}

  Since the above definition is highly general, additional assumptions are required for the analysis. The following conditions are standard and provide the foundation for establishing the existence and uniqueness of weak solutions to parabolic problems with hysteresis:
    \begin{assumption}[Strong continuity]\label{asp:preli_Op_Continuity}
      The operator $\mathcal{F}\colon C[0, T]\times \mathbb{R}\rightarrow C[0, T]$ is continuous: if $u_n\rightarrow u$ in $C\qty[0,T]$ and $w_n^0 \rightarrow w^0$, then $\mathcal{F}\qty(u_n,w_n^0)\rightarrow \mathcal{F}\qty(u,w^0)$ in $C\qty[0,T]$.
    \end{assumption}
    \begin{assumption}[Affine boundedness]\label{asp:preli_Op_Affinely_Boundedness}
    There exist nonnegative constants $C_1$, $C_2$, such that for all $\left(u, w^0\right) \in C[0, T]\times \mathbb{R}$,
    \[ \norm{\mathcal{F}\qty(u;w^0)}_{C\qty[0, T]} \leq C_1\left(\|u\|_{C[0, T]} + \left|w^0\right|\right) + C_2.\]
    \end{assumption}
    \begin{assumption}[Piecewise monotonicity]\label{asp:preli_Op_pw_monotone}
      For all $\left(u, w^0\right) \in C[0, T]\times \mathbb{R}$ and intervals $\left[t_1, t_2\right] \subset\qty[ 0, T]$, if $u$ is either nondecreasing or nonincreasing on $\left[t_1, t_2\right]$, then so is $\mathcal{F}\left(u, w^0\right)$.
    \end{assumption}
    \begin{assumption}[Lipschitz continuity]\label{asp:preli_Op_lipschitz}
      There exists $L>0$ such that for all $u_1, u_2 \in C\left[0, T\right]$ and $ w^0_1, w^0_2 \in \mathbb{R}$, 
      \begin{equation*}
      \norm{\mathcal{F}\left(u_1 , w^0_1\right) - \mathcal{F}\left(u_2 , w^0_2\right)}_{C\qty[0,T]} 
      \leq L \max \left\{ \left\|u_1-u_2\right\|_{C\qty[0,T]},\qty|w^0_1 - w^0_2|\right\}.
      \end{equation*}
    \end{assumption}
    \begin{assumption}[Piecewise Lipschitz continuity]\label{asp:preli_Op_pw_lipschitz}
      There exists $L>0$ such that for any $u \in C\left[0, T\right]$, $ w^0 \in \mathbb{R}$ and interval $\qty[t_1,t_2]\subset \qty[0,T]$,  if $u$ is affine on $\left[t_1, t_2\right]$, then
      \[ 
        \left|[\mathcal{F}(u,w^0)]\left(t_1\right)-[\mathcal{F}(u,w^0)]\left(t_2\right)\right| \leq L\left|u\left(t_1\right)-u\left(t_2\right)\right|.
      \]
    \end{assumption}

  Compared with \cref{asp:preli_Op_lipschitz}, the property in \cref{asp:preli_Op_pw_lipschitz} concerns the Lipschitz continuity of the output function rather than the operator itself when the input is affine. This provides additional temporal regularity: if $u \in W^{1,p}\left(0,T\right)$, then
  \begin{equation}\label{eq:preli_Wkp}
    \mathcal{F}\qty(u, w^{0})  \in W^{1,p}\left(0, T\right),\quad p\in [1,+\infty].
  \end{equation}       
  Furthermore, by density and strong continuity, $\left|\frac{\mathrm{d} \mathcal{F}\qty(u, w^{0})}{\mathrm{d} t} \right| \leq L\left|\frac{\mathrm{d} u}{\mathrm{d} t}\right|$, 
  and \cref{asp:preli_Op_pw_monotone} can equivalently be expressed as for all $\left(u, w^0\right) \in C[0, T]\times \mathbb{R}$, 
  \begin{equation}\label{eq:preli_Op_pw_monotone_2}
    \frac{\mathrm{d} u}{\mathrm{d} t} \frac{\mathrm{d} \mathcal{F}\left(u, w^0\right)  }{\mathrm{d} t}  \geq 0  \quad \text{ a.e. in }\qty(0, T).
  \end{equation}
   
  \begin{figure}
    \centering
    \begin{subfigure}[b]{0.45\textwidth}
      \centering
      \includegraphics[width=\textwidth]{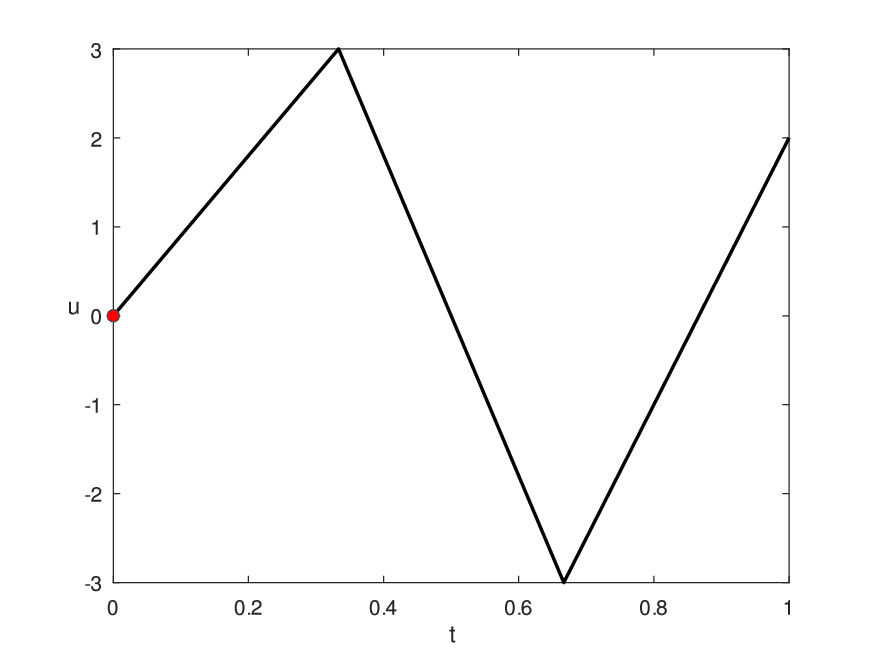}
      \caption{}
      \label{fig:input_ut}
    \end{subfigure} 
    \hfill
    \begin{subfigure}[b]{0.45\textwidth}
      \centering
      \includegraphics[width=\textwidth]{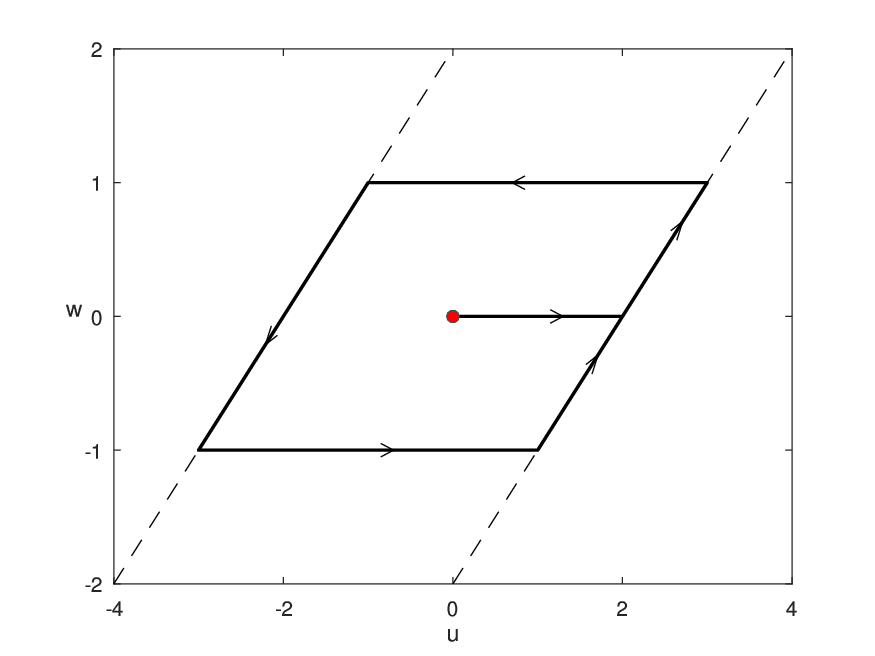}
      \caption{}
      \label{fig:uw_curve}
    \end{subfigure}
    \caption{\enspace Diagram of the linear play operator. (a) Input function $u(t)$. (b) Input-output $u$-$w$ curve exhibiting hysteresis.}
    \label{fig:linear_play}
  \end{figure}

  A fundamental example of a continuous hysteresis operator is the generalized play operator $\mathcal{F}_{[\gamma_{\ell}, \gamma_r]}\colon \left(u,w^0\right)\mapsto w$, which can be expressed by the variational inequality
  \begin{equation*}\label{eq:preli_Op_generalized_play}
    \begin{cases}
    w \in [\gamma_{r}(u), \gamma_\ell(u)], \quad \frac{\mathrm{d} w}{ \mathrm{d} t} (w-v) \leq 0,\quad \forall v\in [\gamma_{r}(u), \gamma_\ell(u)], \\ 
    w(0) = \max\left\{\gamma_r\left(u(0)\right),  \min \left\{ \gamma_l\left(u(0)\right),w^0\right\} \right\},
    \end{cases}
  \end{equation*}  
  where $\gamma_{\ell}, \gamma_r\colon \mathbb{R} \rightarrow[-\infty,+\infty]$ are continuous, nondecreasing functions with $\gamma_r \leq \gamma_{\ell} $. If both $\gamma_{\ell}$ and $ \gamma_r$ are Lipschitz continuous, the above assumptions hold (see \cite[Theorem III.2.2]{visintinDifferentialModelsHysteresis1994}). 
  When $\gamma_{\ell}$ and $ \gamma_r$ are linear with identical slopes, e.g.,
  \begin{equation*}
    \gamma_{\ell}=c(u-a) \quad \text{ and } \quad  \gamma_r=c(u-b),
  \end{equation*}
  for constants $a < b$ and $c>0$, the generalized play reduces to the linear play model
  \begin{equation}\label{eq:preli_Op_linear_play}
    \begin{cases}
    u - \frac{w}{c} \in [a,b], \quad \frac{\mathrm{d} w}{ \mathrm{d} t} (u - \frac{w}{c} - v) \geq 0,\quad \forall v\in [a,b], \\ 
    w(0) = \max\left\{c\left(u(0)-b\right),  \min \left\{c\left(u(0)-a\right), w^0\right\} \right\}.
    \end{cases}
  \end{equation}
  An illustration of its mechanism is shown in \Cref{fig:linear_play}.

  In contrast to \eqref{eq:preli_semi_pde_strong}, uniqueness of weak solutions to \eqref{eq:preli_quasi_pde_strong} is established only for play-type operators, attributable to their characteristic $L^1$-type accretivity. This covers widely used hysteresis models, including the Prandtl-Ishlinskiĭ and Preisach operators \cite{hilpert1989uniqueness}. We therefore adopt the following assumption.
    \begin{assumption}[$L^1$-type accretivity]\label{asp:preli_Op_Accretivity}
      Let $\left(u_i, w_i^0\right) \in W^{1,1}(0, T) \times \mathbb{R}$ $(i=1,2)$, and $s:[0, T] \rightarrow \mathbb{R}$ be a measurable function satisfying $s \in \operatorname{sign}\left(u_1-u_2\right)$ a.e. in $ \qty(0, T)$. Set $w_i:=\mathcal{F}\left(u_i, w_i^0\right), \bar{w}:=w_1-w_2$. Then
      \[
      \frac{\mathrm{d} \bar{w}}{\mathrm{d} t} s \geq \frac{\mathrm{d}}{\mathrm{d} t}|\bar{w}| \quad \text { a.e. in }\qty(0, T).
      \]
    \end{assumption}
   
  \subsection{Parabolic equations with hysteresis}

    We fix a nonempty open set $\Omega \subset \mathbb{R}^N(N \geq 1)$ with a Lipschitz boundary $\partial \Omega$ and a final time $T>0$. We set $Q:= \Omega \times  \qty(0, T)$ and $\Sigma:=\partial \Omega \times  \qty(0, T)$. 
    Let $\Gamma \subset \partial \Omega$ be a relatively open subset, and 
    \begin{equation*}
      V:=H_{\Gamma}^1(\Omega):=\left\{v \in H^1(\Omega): \gamma_0 v=0 \text { on } \Gamma\right\},
    \end{equation*}
    where $\gamma_0$ denotes the trace operator. We identify $L^2(\Omega)$ with its dual, so that 
    \begin{equation*} 
      V \subset L^2(\Omega)=L^2(\Omega)^{\prime} \subset V^{\prime},
    \end{equation*}
    holds with continuous, dense, and compact embeddings. 
    Since $ \mathcal{A}: V \rightarrow V^{\prime}$ is bounded, self-adjoint and uniformly elliptic, the corresponding symmetric bilinear form $a(\cdot,\cdot)$ on $V$ is defined by 
    \begin{equation*}
      a(u,v) =  {}_{V^{\prime}}\langle \mathcal{A} u, v\rangle_V, 
    \end{equation*}
    and there exist positive constants $\alpha$ and $\beta$ such that for all $u,v \in V$, 
    \begin{gather*}
      \alpha \left\|\nabla u\right\|_{L^2\left(\Omega\right)}^2 \leq a(u,u) ,  \quad 
      \left| a(u,v) \right| \leq \beta  \left\|u\right\|_V\left\|v\right\|_V.
    \end{gather*}
    To streamline the exposition and avoid unnecessary technical complications, we assume that $\Omega$ is either smooth or convex. We also focus on the prototype case $\mathcal{A} = -\Delta$ with  homogeneous Dirichlet or Neumann boundary conditions on $\partial \Omega$.

    To account for spatially distributed hysteresis effects, we extend the operator $\mathcal{F}$ in \eqref{eq:preli_Op_operator} by introducing the space variable $x$ as a parameter:
    \begin{equation}\label{eq:preli_SpaceOp} 
      \qty[\mathcal{W}(u,w^0)](x, t):=\qty[{\mathcal{F}}\qty(u(x, \cdot), w^0(x))](t), \quad \forall(x, t) \in \Omega \times[0, T],
    \end{equation}
    for $u : \Omega \times[0, T] \rightarrow \mathbb{R}$ and $w^0 :  \Omega  \rightarrow \mathbb{R}$. 
    Thus, $\mathcal{F}$ acts pointwise in space, describing a local constitutive law for homogeneous materials. This extension introduces temporal memory effects but no spatial coupling.
    Let $M\left(\Omega ; X\right)$ denote the Fréchet space of strongly measurable functions $\Omega \rightarrow X$ with $X$ a real Banach space. 
    For any $u\in M\left(\Omega ; C\qty[0, T]\right)$ and $w^0 \in  M\left(\Omega;\mathbb{R} \right)$, the mapping $\Omega \rightarrow C\qty[0,T]$, $ x \mapsto \qty[\mathcal{W}\qty(u,w^0)](x,\cdot)$ is measurable, i.e., 
    \begin{equation*}
      \mathcal{W}: M\left(\Omega ; C\qty[0, T]\times \mathbb{R}\right) \rightarrow M\left(\Omega ; C\qty[0, T]\right).
    \end{equation*}
    Under Assumption~\ref{asp:preli_Op_Affinely_Boundedness}, for any $p\in[1,+\infty]$, we have 
    \begin{equation*}
      \mathcal{W}: L^p\left(\Omega ; C\qty[0, T]\times \mathbb{R}\right) \rightarrow L^p\left(\Omega ; C\qty[0, T]\right).
    \end{equation*}
    Moreover, $\mathcal{W}$ is Lipschitz continuous in the following sense: there exist a constant $L>0$ such that for all $u_1, u_2 \in L^p\left(\Omega ; C\qty[0, T]\right) $ and $w_1^0,w_2^0 \in L^p(\Omega)$,
    \begin{equation*}
        \left\|\mathcal{W}\left(u_1,w_1^0\right)-\mathcal{W}\left(u_2,w_2^0\right)\right\|_{L^p\left(\Omega ; C\qty[0, T]\right)} 
        \leq L \left(\left\|v_1-v_2\right\|_{L^p\left(\Omega ; C\qty[0, T]\right)}+ \left\|w_1^0-w_2^0\right\|_{L^p\left(\Omega\right)}\right).
    \end{equation*}
    Hence, weak formulations of \eqref{eq:preli_semi_pde_strong} and \eqref{eq:preli_quasi_pde_strong} can be naturally established in the Hilbert-Sobolev framework. Using the $L^2\left(\Omega\right)$ inner product
    \(
     (u,v) = \int_{\Omega} u v\, \mathrm{d} x 
    \)
    to simplify the notations, the following propositions summarize the well-posedness and regularity results, as detailed in \cite{visintinDifferentialModelsHysteresis1994}.

    \begin{proposition}\label{prop:preli_semi_pde_wp}
      Let \cref{asp:preli_Op_Causality,asp:preli_Op_Continuity,asp:preli_Op_Affinely_Boundedness,asp:preli_Op_lipschitz} hold. Given $u^0\in V$, $w^0 \in L^2\qty(\Omega)$ and $f\in L^2\qty(Q)$, there exists a unique weak solution $u$ to \eqref{eq:preli_semi_pde_strong} satisfying 
      \begin{equation*}
        u \in H^1\left(0, T ; L^2\left(\Omega\right)\right) \cap L^\infty\qty(0,T;V) \cap  L^2\left(0,T;H^2\left(\Omega\right)\right) ,
      \end{equation*}
      such that $u(\cdot, 0) = u^0$, $w:=\mathcal{W}\qty(u, w^{0}) \in L^2\left(\Omega ; C\qty[0, T]\right)$ and for a.e. $t>0$,
      \begin{equation}\label{eq:preli_semi_pde_weak}
          \left(\frac{\partial u}{\partial t}, \varphi \right) + a(u,\varphi) + \left(w , \varphi\right)   =\left( f ,\varphi\right),\quad \forall \varphi\in V.
      \end{equation}
      If assume, moreover, that $f\in H^1\left(0,T;L^2\left(\Omega\right)\right)$, $w^0, \Delta u^0 \in L^2\left(\Omega\right)$, and \cref{asp:preli_Op_pw_lipschitz} holds, then $ w \in H^1\left(0, T ; L^2\left(\Omega\right)\right)$ and 
      \begin{gather*}
        u\in H^2\left(0,T;L^2\left(\Omega\right)\right)\cap W^{1,\infty}\left(0,T;V\right) \cap L^\infty\left(0,T;H^2\left(\Omega\right)\right).
      \end{gather*} 
    \end{proposition}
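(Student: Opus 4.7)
The plan is an implicit-Euler semi-discretization in time, compactness to produce a limit, a max-norm Gronwall argument for uniqueness, and difference quotients for the higher regularity. The central analytic obstacle throughout is that \cref{asp:preli_Op_lipschitz} controls $\mathcal{W}$ only in the $C\qty[0,T]$-norm of its input, so mere $L^2(Q)$ convergence of the discrete iterates cannot pass to the limit in the hysteresis term; the core effort is to upgrade the $\tau$-independent energy bounds to strong convergence in $L^2\qty(\Omega;C\qty[0,T])$.

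For existence I set $\tau=T/N$ and define iterates $u_\tau^n\in V$ implicitly by the discrete weak equation, with hysteresis value $w_\tau^n(x)$ computed as $\mathcal{F}$ at $t_n$ applied to the piecewise-linear interpolant of $u_\tau^0,\dots,u_\tau^n$. Each step is uniquely solvable by a Banach contraction in $L^2(\Omega)$, combining causality and Lipschitz continuity of $\mathcal{F}$ with the coercivity of $a$ once $\tau$ is small. Testing with $u_\tau^n$ and with $u_\tau^n-u_\tau^{n-1}$, and invoking the affine bound of \cref{asp:preli_Op_Affinely_Boundedness}, yields $\tau$-independent bounds on the piecewise-linear interpolant $\tilde u_\tau$ in $L^\infty(0,T;V)\cap H^1(0,T;L^2(\Omega))$, and hence on $\Delta u_\tau$ in $L^2(Q)$ by standard elliptic regularity for smooth or convex $\Omega$. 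Aubin-Lions then delivers strong convergence in $C(\qty[0,T];L^2(\Omega))\cap L^2(0,T;V)$ along a subsequence, and a diagonal extraction yields $\tilde u_\tau(x,\cdot)\to u(x,\cdot)$ in $C\qty[0,T]$ for a.e.\ $x\in\Omega$. \cref{asp:preli_Op_Continuity}, together with the dominating envelope of \cref{asp:preli_Op_Affinely_Boundedness}, converts this into $\mathcal{W}(\tilde u_\tau,w^0)\to\mathcal{W}(u,w^0)$ in $L^2\qty(\Omega;C\qty[0,T])$ via dominated convergence, which is exactly what is needed to pass to the limit in every term and recover \eqref{eq:preli_semi_pde_weak}.

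Uniqueness is obtained by subtracting the equations for two solutions, testing with $u_1-u_2$, dropping the nonnegative elliptic contribution, and applying \cref{asp:preli_Op_lipschitz} pointwise in $x$ to obtain
\begin{equation*}
  \norm{w_1(\cdot,t)-w_2(\cdot,t)}_{L^2(\Omega)}\leq L\sup_{s\in\qty[0,t]}\norm{u_1(\cdot,s)-u_2(\cdot,s)}_{L^2(\Omega)}.
\end{equation*}
Setting $\phi(t):=\sup_{s\in\qty[0,t]}\norm{u_1(\cdot,s)-u_2(\cdot,s)}_{L^2(\Omega)}^2$ and combining this with the energy identity gives a Gronwall inequality for $\phi$, forcing $\phi\equiv 0$.

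For the higher regularity part I pass to the discrete difference $\delta u_\tau^n:=(u_\tau^n-u_\tau^{n-1})/\tau$, subtract two consecutive discrete equations, and test with $\delta u_\tau^n$. The hypotheses $\Delta u^0\in L^2(\Omega)$ and $f\in H^1(0,T;L^2(\Omega))$ (so that $f(\cdot,0)\in L^2(\Omega)$ by the trace theorem) bound $\delta u_\tau^1$ in $L^2(\Omega)$ uniformly in $\tau$, and \cref{asp:preli_Op_pw_lipschitz} applied on each affine piece of $\tilde u_\tau$ gives $\norm{w_\tau^n-w_\tau^{n-1}}_{L^2(\Omega)}\leq L\tau\norm{\delta u_\tau^n}_{L^2(\Omega)}$. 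These ingredients yield uniform bounds on $\delta u_\tau$ in $L^\infty(0,T;L^2(\Omega))\cap L^2(0,T;V)$, and hence on $\partial_t\mathcal{W}(\tilde u_\tau,w^0)$ in $L^2(Q)$. Passing $\tau\downarrow 0$ and invoking elliptic regularity at a.e.\ $t$ place $u$ in the announced class and yield $w\in H^1(0,T;L^2(\Omega))$ as a byproduct. The hardest step remains the first one: squeezing $C\qty[0,T]$-strong, pointwise-in-$x$ convergence of $\tilde u_\tau$ out of only $H^1(0,T;L^2)\cap L^\infty(0,T;V)$ bounds, since without it \cref{asp:preli_Op_Continuity} cannot be activated.
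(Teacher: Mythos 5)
The paper itself does not prove this proposition; it defers to Visintin's monograph, and your plan reconstructs essentially that standard route (implicit Euler in time, $\tau$-independent energy bounds, compactness, an $L^2$-technique for uniqueness, difference quotients for regularity). However, two steps as written do not hold, and both fail for the same reason that the paper's introduction flags as the central difficulty: \cref{asp:preli_Op_lipschitz} makes $\mathcal{W}$ Lipschitz only in the sense of $L^2\qty(\Omega;C\qty[0,T])$, not as a map into $C\qty(\qty[0,T];L^2(\Omega))$. Concretely, your uniqueness inequality is not a consequence of \cref{asp:preli_Op_lipschitz}: pointwise in $x$ one gets $\qty|\bar w(x,t)|\le L\sup_{s\le t}\qty|\bar u(x,s)|$, hence $\norm{\bar w(\cdot,t)}_{L^2(\Omega)}\le L\,\norm{\sup_{s\le t}\qty|\bar u(\cdot,s)|}_{L^2(\Omega)}$, and this last quantity \emph{dominates} $\sup_{s\le t}\norm{\bar u(\cdot,s)}_{L^2(\Omega)}$; the interchange of $\sup$ and the $x$-integral in the direction you need is false in general (think of a bump whose spatial location moves in time). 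So the Gronwall argument for $\phi$ does not close. The standard repair is to use the regularity already established: since both solutions lie in $H^1\qty(0,T;L^2(\Omega))$, test the difference of the equations with $\partial_t\bar u$, bound $\sup_{s\le t}\qty|\bar u(x,s)|\le\int_0^t\qty|\partial_s\bar u(x,s)|\,\mathrm{d}s$ pointwise in $x$ before integrating, so that $\norm{\bar w(\cdot,s)}\le L\sqrt{s}\,\norm{\partial_t\bar u}_{L^2(0,s;L^2)}$, absorb and apply Gronwall to conclude $\partial_t\bar u\equiv0$.

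The same issue undermines your limit passage. Convergence in $C\qty(\qty[0,T];L^2(\Omega))\cap L^2(0,T;V)$ plus a ``diagonal extraction'' does not yield $\tilde u_\tau(x,\cdot)\to u(x,\cdot)$ in $C\qty[0,T]$ for a.e.\ $x$ (you have no $\tau$-uniform equicontinuity of $t\mapsto\tilde u_\tau(x,t)$ at fixed $x$), and your dominated-convergence step has no $\tau$-uniform $L^2(\Omega)$ envelope for $\norm{\tilde u_\tau(x,\cdot)}_{C\qty[0,T]}$. The missing ingredient is the one-dimensional interpolation inequality $\norm{g}_{C\qty[0,T]}^2\lesssim\norm{g}_{L^2(0,T)}\qty(\norm{g}_{L^2(0,T)}+\norm{g'}_{L^2(0,T)})$: applied to $g=\tilde u_\tau(x,\cdot)-u(x,\cdot)$ and integrated over $\Omega$, it converts the strong $L^2(Q)$ convergence from Aubin--Lions together with the uniform $H^1\qty(0,T;L^2(\Omega))$ bound into strong convergence in $L^2\qty(\Omega;C\qty[0,T])$, after which \cref{asp:preli_Op_lipschitz} passes the limit in the hysteresis term directly (no appeal to \cref{asp:preli_Op_Continuity} or dominated convergence is needed). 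Finally, your higher-regularity step falls short of the statement: testing the differenced scheme with $\delta u_\tau^n$ only bounds $\delta u_\tau$ in $L^\infty\qty(0,T;L^2(\Omega))\cap L^2(0,T;V)$, i.e.\ $u\in W^{1,\infty}\qty(0,T;L^2(\Omega))\cap H^1(0,T;V)$, whereas the proposition asserts $u\in H^2\qty(0,T;L^2(\Omega))\cap W^{1,\infty}(0,T;V)$; for that you must test the differenced equation with the increment $\delta u_\tau^n-\delta u_\tau^{n-1}$ (a second difference-quotient argument), using \cref{asp:preli_Op_pw_lipschitz} to bound the corresponding hysteresis increments and the hypotheses $\Delta u^0,\,w^0,\,f(\cdot,0)\in L^2(\Omega)$ to control the first step.
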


    \begin{proposition}\label{prop:preli_quasi_pde_wp}
      Let \cref{asp:preli_Op_Causality,asp:preli_Op_Continuity,asp:preli_Op_Affinely_Boundedness,asp:preli_Op_pw_monotone,asp:preli_Op_pw_lipschitz,asp:preli_Op_Accretivity} hold. Given $u^0\in V$, $w^0 \in L^2\qty(\Omega)$ and $f\in L^2\qty(Q)$, there exists a unique weak solution $u$ to \eqref{eq:preli_quasi_pde_strong} satisfying 
      \begin{equation*}
        u \in H^1\left(0, T ; L^2\left(\Omega\right)\right) \cap L^\infty\qty(0,T;V) \cap  L^2\left(0,T;H^2\left(\Omega\right)\right)
      \end{equation*}
      with $u(\cdot, 0) = u^0$ and $w:=\mathcal{W}\qty(u, w^{0}) \in L^2\left(\Omega ; C\qty[0, T]\right) \cap H^1\left(0, T ; L^2\left(\Omega\right)\right)$, such that for a.e. $t>0$,
      \begin{equation}\label{eq:preli_quasi_pde_weak}
        \left(\frac{\partial u}{\partial t}, \varphi \right) + \left(\frac{\partial w}{\partial t} , \varphi\right) + a(u,\varphi) = \left( f ,\varphi\right),\quad \forall \varphi\in V.
      \end{equation}
    \end{proposition}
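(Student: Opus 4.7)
The plan is to follow Visintin's Rothe-method approach: build approximate solutions by implicit time discretization, extract compactness, pass to the limit using the strong continuity of $\mathcal{W}$, and then obtain uniqueness from the $L^1$-accretivity via a Kato/Hilpert-type argument.

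\textbf{Step 1 (Time-discrete scheme).} Partition $[0,T]$ with step $\tau = T/N$ and construct iterates $\{u^n\}_{n=0}^N \subset V$ by solving
\[
  \bigl(\tfrac{u^n-u^{n-1}}{\tau},\varphi\bigr) + \bigl(\tfrac{w^n-w^{n-1}}{\tau},\varphi\bigr) + a(u^n,\varphi) = (f^n,\varphi), \quad \forall\varphi\in V,
\]
with $w^n(x):=[\mathcal{F}(\hat u_\tau(x,\cdot),w^0(x))](t_n)$ and $\hat u_\tau$ the piecewise-affine interpolant of $u^0,\dots,u^n$. By causality the map $u^n\mapsto w^n$ depends only on the current iterate, and piecewise Lipschitz continuity bounds the increment $|w^n-w^{n-1}|\le L|u^n-u^{n-1}|$ pointwise; existence of each iterate then follows from a monotone-operator / Banach fixed-point argument.

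\textbf{Step 2 (A priori estimates).} Testing with $u^n$ and summing, one uses the piecewise-monotonicity reformulation \eqref{eq:preli_Op_pw_monotone_2} to show that the hysteresis contribution has a controllable sign, yielding $\max_n\|u^n\|_{L^2}^2 + \tau\sum_n a(u^n,u^n)\le C$. Testing instead with $(u^n-u^{n-1})/\tau$ and exploiting the piecewise Lipschitz bound on $w^n-w^{n-1}$ gives
\[
  \tau\sum_{n=1}^N \bigl\|\tfrac{u^n-u^{n-1}}{\tau}\bigr\|_{L^2}^2 + \max_n \|u^n\|_V^2 \le C\bigl(\|f\|_{L^2(Q)},\|u^0\|_V,\|w^0\|_{L^2}\bigr),
\]
together with an $L^2(0,T;H^2)$ estimate from elliptic regularity on the convex/smooth $\Omega$. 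Affine boundedness gives the corresponding $L^\infty(0,T;L^2)$ bound on $w^n$.

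\textbf{Step 3 (Passage to the limit).} Let $\hat u_\tau$ and $\bar u_\tau$ denote the piecewise-linear and piecewise-constant interpolants. The bounds above plus Aubin--Lions yield a subsequence with $\bar u_\tau\rightharpoonup u$ in $L^2(0,T;H^2)$, $\partial_t\hat u_\tau\rightharpoonup\partial_t u$ in $L^2(Q)$, and $\hat u_\tau\to u$ strongly in $L^2(Q)$; passing to a further subsequence, $\hat u_\tau(x,\cdot)\to u(x,\cdot)$ in $C[0,T]$ for a.e.\ $x\in\Omega$. Strong continuity of $\mathcal{F}$ applied fibrewise gives $\mathcal{W}(\hat u_\tau,w^0)(x,\cdot)\to\mathcal{W}(u,w^0)(x,\cdot)$ in $C[0,T]$ a.e.\ in $\Omega$, and affine boundedness supplies an $L^2(\Omega;C[0,T])$ envelope so that Vitali's theorem promotes this to strong $L^2(\Omega;C[0,T])$ convergence. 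The weak form \eqref{eq:preli_quasi_pde_weak} follows by letting $\tau\to 0$ in the discrete identity. The claimed regularity of $w$ is inherited from the uniform discrete estimates and the pointwise bound $|\partial_t w|\le L|\partial_t u|$.

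\textbf{Step 4 (Uniqueness).} Given two solutions $u_1,u_2$ with the same data, set $\bar u=u_1-u_2$, $\bar w=w_1-w_2$. Subtracting the equations gives $\partial_t\bar u+\partial_t\bar w+\mathcal{A}\bar u=0$. Testing against a regularized sign $s_\varepsilon(\bar u)$ and integrating over $\Omega\times(0,t)$, the $L^1$-type accretivity of $\mathcal{F}$ (\cref{asp:preli_Op_Accretivity}), applied pointwise in $x$, controls the hysteresis increment by $\partial_t|\bar w|$, while Kato's inequality controls the elliptic term by $(\mathcal{A}\bar u,s_\varepsilon(\bar u))\ge 0$ in the limit. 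Sending $\varepsilon\to 0$ yields $\|\bar u(t)\|_{L^1(\Omega)}+\|\bar w(t)\|_{L^1(\Omega)}\le 0$, which forces $u_1\equiv u_2$.

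\textbf{Main obstacle.} The delicate step is obtaining, in Step 3, a.e.\ pointwise strong convergence of $\hat u_\tau(x,\cdot)$ in $C[0,T]$: the hysteresis operator is only continuous with respect to uniform-in-time, not $L^2$-in-time, convergence, so merely weak convergence in time-regularity norms is insufficient. Reconciling the fibrewise continuity of $\mathcal{F}$ with the integral bounds from the Rothe scheme is what makes the argument intricate, and it also forces one to invoke Vitali/dominated convergence via affine boundedness to close the limiting passage in the integrated weak formulation.
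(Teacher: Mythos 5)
The paper itself does not prove this proposition: it is stated as a summary of results from Visintin's monograph (\cite{visintinDifferentialModelsHysteresis1994}), to which the reader is referred. Your Rothe-method outline reproduces essentially the argument in that reference — implicit time discretization, a priori estimates from testing with the backward difference quotient, Aubin--Lions compactness, fibrewise passage to the limit in the hysteresis operator, and a Hilpert/Kato $L^1$-accretivity argument for uniqueness — so the route is correct in substance and matches the cited source. One attribution in Step~2 is off: for the quasilinear scheme the hysteresis contribution $\tau^{-1}(w^n-w^{n-1},u^n)$ has no sign when testing with $u^n$, and the piecewise-monotonicity condition \eqref{eq:preli_Op_pw_monotone_2} only yields $(w^n-w^{n-1},u^n-u^{n-1})\ge 0$; hence the primary estimate comes solely from testing with $(u^n-u^{n-1})/\tau$ and using piecewise \emph{monotonicity} to discard the hysteresis term, whereas the piecewise \emph{Lipschitz} bound $|w^n-w^{n-1}|\le L|u^n-u^{n-1}|$ is what one invokes afterwards, e.g.\ to control $\Delta u^n$ and obtain the $L^2(0,T;H^2)$ estimate by elliptic regularity. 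Your Step~3 correctly flags the key difficulty, but the cleaner way to justify a.e.\ $C[0,T]$-convergence of the fibres (rather than ``pass to a further subsequence'') is to use the interpolation inequality $\|v\|_{C[0,T]}^2 \lesssim \|v\|_{L^2(0,T)}\|v\|_{H^1(0,T)}+\|v\|_{L^2(0,T)}^2$ pointwise in $x$, integrate in $x$, and apply Cauchy--Schwarz to conclude $\hat u_\tau\to u$ in $L^2(\Omega;C[0,T])$ from strong $L^2(Q)$ convergence and the uniform $L^2(\Omega;H^1(0,T))$ bound; the Vitali argument is then only needed for the image $w_\tau$, exactly as you describe.
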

    
\section{Finite element error analysis}\label{sec:fem}
  In this section, we present error analysis for the fully discrete approximation of \eqref{eq:preli_semi_pde_weak} and \eqref{eq:preli_quasi_pde_weak}.
  From now on, we assume that $\bar{\Omega}$ is a convex polyhedral domain that can be represented as the union of a finite number of polyhedra.  Let $\mathcal{T}_h$ denote a regular family of triangulations of $\bar{\Omega}$, and assume that all triples $(K, P_K, \Sigma_K)$, $K \in \mathcal{T}_h$, form affine-equivalent finite element families~\cite{ciarlet2002finite}. 
  We denote by $h_K$ the diameter of each element $K$ and define $h := \max_{K\in \mathcal{T}_h} h_K$. 
  The unknown $u$ is approximated by continuous, piecewise-linear finite elements, and the corresponding finite element space is denoted by $V_h$.  
  The initial value $u_h^0$ is also given as an approximation of $u^0$.

  To obtain a full discretization, we consider a uniform partition of the time interval $[0,T]$ with nodes $t_k := k \tau$, $k=0,1,\dots,K$, where $\tau := T/K$ is the time step. For convenience, we introduce the notations $I_k := [t_{k-1},t_k]$, $J_k := [0,t_{k}]$, and $v^k := v(t_k)$ for a generic  time-dependent function $v$.
  Applying the backward Euler method in time, the fully discrete finite element formulations of \eqref{eq:preli_semi_pde_weak} and \eqref{eq:preli_quasi_pde_weak} are given as follows:
  \begin{problem}\label{prob:semi_fem_full}
    Find $u_h^{k+1} \in  V_h$, $k=0,1,\dots,K-1$, such that for all $\varphi_h\in V_h$,
    \begin{equation}\label{eq:semi_fem_full}
      \left(\frac{u_h^{k+1} -  u_h^k}{\tau} , \varphi_h \right) + a(u_h^{k+1},\varphi_h) + \left( w_{h}^{k+1} , \varphi_h\right)   =\left(  \bar f^{k+1} ,\varphi_h\right),
    \end{equation}
    where $\bar f^{k+1} = \frac{1}{\tau}\int_{t_k}^{t_{k+1}}f\, \mathrm{d} s $ and $w_{h}^{k+1}= w_{h\tau}(t_{k+1}) $ with $ w_{h\tau} := \mathcal{W}\left(u_{h\tau},w^{0}\right)$.
  \end{problem}
  \begin{problem}\label{prob:quasi_fem_full}
    Find $u_h^{k+1} \in  V_h$, $k=0,1,\dots,K-1$, such that for all $\varphi_h\in V_h$,
    \begin{equation}\label{eq:quasi_fem_full}
      \left(\frac{u_h^{k+1} -  u_h^k}{\tau} , \varphi_h \right) + \left( \frac{ w_{h}^{k+1} - w_{h}^{k}}{\tau} ,\varphi_h\right) + a(u_h^{k+1},\varphi_h) = \left(  \bar f^{k+1} ,\varphi_h\right),
    \end{equation}
    where $\bar f^{k+1} = \frac{1}{\tau}\int_{t_k}^{t_{k+1}}f\, \mathrm{d} s $, $w_{h}^{k+1}= w_{h\tau}(t_{k+1}) $ and $w_{h}^{k}= w_{h\tau}(t_{k}) $ with $ w_{h\tau} := \mathcal{W}\left(u_{h\tau},w^{0}\right)$.
  \end{problem}
  \begin{remark}
    The causality of hysteresis operators ensures that the solution at time step $k+1$ depends only on the previous $u_h^j$, $j=0,1,\dots,k$, and thus the scheme can be advanced sequentially in time.
    The well-posedness of Problems~\ref{prob:semi_fem_full} and~\ref{prob:quasi_fem_full} follows directly by adapting the argument in~\cite{verdiNumericalApproximationHysteresis1985,verdiNumericalApproximationPreisach1989}.
  \end{remark}
    
  We next define the elliptic projection $R_{h}\colon V \rightarrow V_h$ such that for $u \in V$,
  \begin{equation}\label{eq:semi_fem_ritz_def}
    a\left(u-R_{h} u, \varphi_h\right) + \lambda \left(u-R_{h} u, \varphi_h\right)=0, \quad \forall \varphi_h \in V_h,
  \end{equation}
  where $\lambda$ is a constant.
  It is well-known that for $ 1\leq m \leq 2$, $0 \leq l \leq 1$, there exists a constant $C$ independent of $h$ such that
  \begin{equation*}
    \norm{u-R_{h}u}_{H^l\left(\Omega\right)}  \le Ch^{m-l}\norm{u}_{H^m\left(\Omega\right)}.
  \end{equation*}
  Let $u_{h\tau}$ denote the piecewise-linear time interpolant of $\{u_h^k\}_{k=0}^K$, defined for $s\in[0,1]$ and $k=0,1,\dots,K-1$ by
  \begin{equation*}
    u_{h\tau}\left((k+s)\tau\right):= s u_h^{k+1} + (1-s) u_h^k.
  \end{equation*}
  Following~\cite{wheeler_priori_1973}, we decompose the fully discrete error as
  \begin{equation}\label{eq:semi_fem_semi_u_err_decomp}
      u\qty(t)-u_{h\tau}\qty(t)=\left(u\qty(t)-R_{h} u\qty(t)\right)+\left(R_{h} u\qty(t)-u_{h\tau}\qty(t)\right)=:\rho\qty(t) +\theta\qty(t).
  \end{equation}
  Then for $0 \leq l \leq 1$, there exists a constant $C$ independent of $h$ and $\tau$ such that
  \begin{gather}
      \label{eq:semi_fem_rho_error}
          \norm{\rho(t)}_{H^{l}\qty(\Omega)}  \leq C h^{2-l}\norm{u(t)}_{H^{2}\qty(\Omega)} , \\ 
      \label{eq:semi_fem_rhot_error}
          \norm{\dv{\rho}{t}\qty(t)}_{L^{2}\qty(\Omega)} \leq C h^{1+l}\norm{\dv{u}{t} \qty(t)}_{H^{1+l}\qty(\Omega)}, 
  \end{gather}
  provided that $u\qty(t) \in H^{2}\qty(\Omega)$ and $ \dv{u}{t} \qty(t) \in H^{1+l}\qty(\Omega)$.

  Throughout, we work in the standard Sobolev spaces $H^s(\Omega)$ and their vector-valued counterparts $H^s\qty(\Omega;X)$ with $X$ a Banach space.  
  The norm of $H^s(\Omega)$ is denoted by $\|\cdot\|_s$ and we simply write $\|\cdot\|$ for the $L^2$-norm.  
  Besides, for time-dependent variables $u$, we define $\norm{u}_{\Lambda;s}^2:=\int_{\Lambda} \norm{u(t)}^2_s\,\mathrm{d} t$ where $\Lambda \subset [0,T]$ is a measurable set, and $\norm{u}_{\infty;s}:=\mathrm{ess}\sup_{t \in [0,T]} \norm{u(t)}_s$.
  We use the notation $A \lesssim B$ to indicate that $A \le c B$ for some constant $c>0$ independent of $h$, $\tau$, and the variables in $A$ and $B$.  
  Similarly, $A \gtrsim B$ means $B \lesssim A$, and $A \approx B$ means that both $A \lesssim B$ and $B \lesssim A$ hold.

  \subsection{Error analysis on semilinear equations with memory}\label{subsec:semi_fem}

    \begin{theorem}\label{thm:semi_fem_fully}
      Assume all assumptions in \cref{prop:preli_semi_pde_wp} hold and $u^0_h = R_h u^0$.
      Then, for $k=1,2,\dots,K$,  the following estimates hold:
      \begin{equation*}
        \norm{u(t_k)-u_h^k}_{H^{1}\qty(\Omega)} + \left\|w(t_k)-w_h^{k}\right\|_{L^{2}\qty(\Omega)} \lesssim \tau + h .
      \end{equation*}
      If assume, moreover, that $u^0\in H^2\left(\Omega\right)$ and $\frac{\partial u}{ \partial t} \in L^2\left(0,T;H^2\left(\Omega\right)\right)$, then 
      \begin{equation*}
        \norm{u(t_k)-u_h^k}_{L^{2}\qty(\Omega)}  + \left\|w(t_k)-w_h^{k}\right\|_{L^{2}\qty(\Omega)}  \lesssim \tau + h^{2} .
      \end{equation*}
    \end{theorem}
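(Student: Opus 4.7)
The plan is to extend Wheeler's Ritz projection framework \cite{wheeler_priori_1973} to the memory-dependent setting. Using the decomposition \eqref{eq:semi_fem_semi_u_err_decomp} with $u_h^0 = R_h u^0$ (so $\theta^0 = 0$) and the projection bounds \eqref{eq:semi_fem_rho_error}-\eqref{eq:semi_fem_rhot_error}, the task reduces to controlling $\theta^k = R_h u(t_k) - u_h^k$. Writing $\partial_\tau v^{k+1} := (v^{k+1}-v^k)/\tau$, evaluating \eqref{eq:preli_semi_pde_weak} at $t_{k+1}$, subtracting \eqref{eq:semi_fem_full}, and invoking \eqref{eq:semi_fem_ritz_def} produces the discrete error equation, for $\varphi_h\in V_h$,
\begin{equation*}
  (\partial_\tau\theta^{k+1},\varphi_h) + a(\theta^{k+1},\varphi_h) = (R^{k+1},\varphi_h) - (w(t_{k+1})-w_h^{k+1},\varphi_h),
\end{equation*}
where $R^{k+1}$ collects the backward-Euler consistency error $\partial_t u(t_{k+1})-\partial_\tau u^{k+1}$, the source-averaging residual, and the lower-order terms $-\partial_\tau\rho^{k+1}$ and $\lambda\rho^{k+1}$ from the shifted Ritz projection. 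For the $H^1$-bound I would test with $\partial_\tau\theta^{k+1}$ and sum over $k$, producing a discrete energy identity that controls $\max_k\|\theta^k\|_1$ and $\sum_k\tau\|\partial_\tau\theta^{k+1}\|^2$; for the sharper $L^2$-bound I would test with $\theta^{k+1}$ itself and invoke the additional regularity $u^0\in H^2(\Omega)$, $\partial_t u\in L^2(0,T;H^2(\Omega))$ so that both $\|\rho(0)\|$ and $\|\partial_\tau\rho^{k+1}\|$ are $O(h^2)$.

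The principal obstacle is the hysteresis term. By causality and the Lipschitz property of $\mathcal{W}$ as a map $L^2(\Omega;C[0,t])\to L^2(\Omega;C[0,t])$,
\begin{equation*}
  \|w(t_{k+1})-w_h^{k+1}\| \leq L\,\|u-u_{h\tau}\|_{L^2(\Omega;C[0,t_{k+1}])},
\end{equation*}
and the right-hand side is \emph{not} dominated by $\max_{j\leq k+1}\|u(t_j)-u_h^j\|$, so a naive pointwise-in-time Gronwall closure is impossible. To bring this norm into the discrete energy framework I would apply, for almost every $x\in\Omega$, the elementary bound
\begin{equation*}
  \sup_{s\in[0,t_{k+1}]}|(u-u_{h\tau})(x,s)|^2 \lesssim |(u-u_{h\tau})(x,0)|^2 + t_{k+1}\int_0^{t_{k+1}}|\partial_r(u-u_{h\tau})(x,r)|^2\,\mathrm{d} r,
\end{equation*}
integrate in $x$, and decompose $u-u_{h\tau}=\rho+\theta$. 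Since $\theta$ is piecewise linear in time, $\int_0^{t_{k+1}}\|\partial_t\theta\|^2\,\mathrm{d} t = \sum_{j=0}^{k}\tau\|\partial_\tau\theta^{j+1}\|^2$, which is precisely the quantity already produced on the left-hand side of the energy identity; the $\rho$-piece is controlled by \eqref{eq:semi_fem_rhot_error}, while the $\partial_t u$ time-consistency is handled by the stated temporal regularity.

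The argument then closes by applying Young's inequality to the memory contribution, absorbing a small fraction of $\sum_j\tau\|\partial_\tau\theta^{j+1}\|^2$ into the left-hand side, and invoking the discrete Gronwall inequality to obtain uniform-in-$k$ bounds on $\|\theta^k\|_1$ (or $\|\theta^k\|$ in the $L^2$ case); combining with \eqref{eq:semi_fem_rho_error} yields the asserted estimate for $u(t_k)-u_h^k$, and the bound on $w(t_k)-w_h^k$ follows by a final application of the Lipschitz inequality above. The hard part is genuinely the transfer between the spatially-pointwise, temporally-uniform topology in which $\mathcal{W}$ is Lipschitz and the spatially-integrated, temporally-pointwise topology native to the energy method; it is resolved by the elementary $\sup$-inequality above together with the piecewise-linear time structure of $u_{h\tau}$ and $\theta$.
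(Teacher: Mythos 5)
Your proposal follows the same strategy as the paper: the $\rho/\theta$ split via the shifted Ritz projection, the pointwise-in-$x$ Lipschitz bound on the hysteresis term reduced by $\sup_{s\le t_{k+1}}|g(s)|^2 \lesssim |g(0)|^2 + t_{k+1}\int_0^{t_{k+1}}|\partial_r g|^2\,\mathrm{d}r$ to the quantity $\sum_j \tau\|\partial_\tau\theta^{j+1}\|^2$, the two test functions $\partial_\tau\theta^{k+1}$ and $\theta^{k+1}$ to control respectively the $H^1$- and $L^2$-norms, and the two successive applications of the discrete Gronwall inequality; the paper packages the sup-bound through an $L^1$-in-time step rather than the squared form you write, but the content is identical. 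You have correctly identified the crux — that $\mathcal{W}$ is Lipschitz in $L^2(\Omega;C[0,t])$ rather than in $L^\infty(0,t;L^2(\Omega))$, which forecloses a pointwise-in-time Gronwall and forces the $L^2$-in-time derivative quantity to serve as the bridge — and your resolution matches the paper's.
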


    \begin{proof}
      For each $k = 1,2,\dots,K$, the time derivative of $u$ at $t_k$ can be written as
      \begin{displaymath}
          \frac{\partial u}{\partial t}\qty(t_k) = \frac{u\qty(t_k)-u\left(t_{k-1}\right)}{\tau}+\frac{1}{\tau} \int_{t_{k-1}}^{t_k}\left(s-t_{k-1}\right) \frac{\partial^2 u}{\partial t^2}(s)\, \mathrm{d}  s.
      \end{displaymath}
      This yields the following decomposition for the temporal difference error:
      \begin{equation}\label{eq:semi_fem_derivative}
              \frac{\partial u}{\partial t}\qty(t_k)-\frac{u_h^k-u_h^{k-1}}{\tau}
              =  \frac{\rho^k-\rho^{k-1}}{\tau}+ \frac{\theta^k-\theta^{k-1}}{\tau}+\int_{t_{k-1}}^{t_k}\left(\frac{s-t_{k-1}}{\tau}\right) \frac{\partial^2 u}{\partial t^2}(s)\, \mathrm{d}  s.
      \end{equation}
      Taking $\varphi = \varphi_h$ in \eqref{eq:preli_semi_pde_weak} at $t = t_{k+1}$ and subtracting \eqref{eq:semi_fem_full}, we obtain
      \begin{multline}\label{eq:semi_fem_full_theta_raw}
           \left( \frac{\theta^{k+1}-\theta^k}{\tau}, \varphi_h \right) 
          + a\left( \theta^{k+1}, \varphi_h \right) = - \left(  \frac{\rho^{k+1}-\rho^{k}}{\tau}, \varphi_h \right) + \lambda\left(\rho^{k+1}, \varphi_h\right) \\ 
          - \left(\int_{t_k}^{t_{k+1}} \left(\frac{s-t_{k-1}}{\tau}\right) \frac{\partial^2 u}{\partial t^2}(s)  \mathrm{d}  s , {\varphi_h}\right) - \left(w^{k+1} -  w_{h}^{k+1}, \varphi_h\right) + \left(f^{k+1} -  \bar f^{k+1}, \varphi_h\right).
      \end{multline}
      Choosing $\varphi_h = \theta^{k+1}$ in \eqref{eq:semi_fem_full_theta_raw} and summing over $k$,  we obtain 
      \begin{equation}\label{eq:semi_fem_full_theta_theta}
        \begin{aligned}
        & \left\|\theta^{k+1}\right\|^2  - \left\|\theta^0\right\|^2 + \sum_{n=0}^k \left\|\theta^{n+1}-\theta^n\right\|^2 + 2\alpha \sum_{n=0}^k \tau\left\|\nabla \theta^{n+1}\right\|^2  \\ 
        \lesssim  & \sum_{n=0}^k  \tau \left\|\theta^{n+1}\right\|^2   +  \tau^2 \left\| \frac{\partial^2 u}{\partial t^2}\right\|_{J_{k+1};0}^2  + \left\|\rho\right\|_{\infty;0}^2   \\ 
        & + \sum_{n=0}^k  \left( \tau \left\|\frac{\rho^{n+1}-\rho^{n}}{\tau}\right\|^2   +  \tau \left\|w^{n+1}-w_h^{n+1}\right\|^2 + \tau \left\|f^{n+1}-\bar f^{n+1}\right\|^2 \right).
        \end{aligned}
      \end{equation}    
      A direct computation gives
      \begin{equation} \label{eq:semi_fem_delta_rho}
        \begin{aligned}
          & \sum_{n=0}^k  \tau \left\|\frac{\rho^{n+1}-\rho^{n}}{\tau}\right\|^2
          \leq \left\|\frac{\partial \rho}{\partial t}\right\|_{J_{k+1};0}^2  , \quad \sum_{n=0}^k \tau \left\|f^{n+1}-\bar f^{n+1}\right\|^2 \leq  \tau^2 \left\|\frac{\partial f}{\partial t}\right\|_{J_{k+1};0}^2.
        \end{aligned}
      \end{equation}
      By the Lipschitz continuity in \cref{asp:preli_Op_lipschitz}, we have 
      \begin{equation*}
        \begin{aligned}
          \qty|w^{k+1}(x) - w^{k+1}_h(x)|  = &\qty|\mathcal{W}\left(u(x,\cdot), w^{0}(x)\right)(t_{k+1}) - \mathcal{W}\left(u_{h\tau}(x,\cdot), w^{0}(x)\right)(t_{k+1})| \\ 
          \lesssim &  \norm{u\qty(x,\cdot) - u_{h\tau}(x,\cdot)}_{C[0,t_{k+1}]}.
        \end{aligned}
      \end{equation*}
      Notice for $t' \in [0,t_{k+1}]$, we have 
      \begin{equation*}
        \qty|\qty(u\qty(x,t')-u_{h\tau}\qty(x,t'))| \le \qty|u^0\qty(x)-u_h^0\qty(x)| + \norm{\dv{u}{t}\qty(x) - \dv{u_{h\tau}}{t}\qty(x)}_{L^1\qty(0,t_{k+1})},   
      \end{equation*}      
      and from \eqref{eq:semi_fem_derivative} and \eqref{eq:semi_fem_delta_rho}, we derive
      \begin{equation*}\label{eq:semi_fem_derivative_LQ_error_estimate}
        \begin{aligned}
        \left\|\frac{\partial u}{\partial t}-\frac{\partial u_{h\tau}}{\partial t}\right\|_{J_{k+1};0}^2
        \lesssim & \sum_{n=0}^k \left\|\frac{\partial u}{\partial t} - \left(\frac{\partial u}{\partial t}\right)^{n+1}\right\|_{I_{n+1};0}^2 +  \left\|\left(\frac{\partial u}{\partial t}\right)^{n+1} - \frac{u_h^{n+1}-u_h^n}{\tau}\right\|_{I_{n+1};0}^2  \\ 
        \lesssim &  \tau^2\left\|\frac{\partial^2 u}{\partial t^2}\right\|_{J_{k+1};0}^2   + \left\|\frac{\partial \rho}{\partial t}\right\|_{J_{k+1};0}^2  + \sum_{n=0}^k \tau\left\|\frac{\theta^{n+1}-\theta^n}{\tau}\right\|^2.
        \end{aligned}
      \end{equation*}
      It follows by the desired estimate
      \begin{equation}\label{eq:semi_fem_w_error_raw}
        \left\|w^{k+1}-w_h^{k+1}\right\|^2 \lesssim \norm{u^0-u_h^0}^2 + \sum_{n=0}^k \tau\left\|\frac{\theta^{n+1}-\theta^n}{\tau}\right\|^2    + \tau^2\left\|\frac{\partial^2 u}{\partial t^2}\right\|_{J_{k+1};0}^2    + \left\|\frac{\partial \rho}{\partial t}\right\|_{J_{k+1};0}^2.
      \end{equation} 
      Now proceeding as in \eqref{eq:semi_fem_full_theta_theta},  we obtain
      \begin{equation}\label{eq:semi_fem_theta_intermediate}
           \left\|\theta^{k+1}\right\|^2 
          \lesssim  \underbrace{\sum_{n=0}^k \tau\left\|\frac{\theta^{n+1}-\theta^n}{\tau}\right\|^2}_{(\mathrm{I})} +  \tau\sum_{n=0}^k\left\| \theta^{n+1}\right\|^2 + 
          M_0\tau^2  + M_\rho  + \left\|\theta^{0}\right\|^2 + \norm{u^0-u_h^0}^2,
      \end{equation}
      where we set
      \[M_0 := \left\| \frac{\partial^2 u}{\partial t^2}\right\|_{J_{k+1};0}^2 + \left\|\frac{\partial f}{\partial t}\right\|_{J_{k+1};0}^2, \quad M_\rho := \left\|\rho\right\|_{\infty;0}^2 + \left\|\frac{\partial \rho}{\partial t}\right\|_{J_{k+1};0}^2 .\]
      To estimate $(\mathrm{I})$, we choose $ \varphi_h= \theta^{k+1}-\theta^k $ in \eqref{eq:semi_fem_full_theta_raw} and sum over $k$. Combining \eqref{eq:semi_fem_delta_rho} and \eqref{eq:semi_fem_w_error_raw}, Young's inequality gives 
      \begin{equation*}\label{eq:semi_fem_theta_diff_explicitC}
        \begin{aligned}
          & \sum_{n=1}^{k+1} \tau \left\|\frac{\theta^{n}-\theta^{n-1}}{\tau}\right\|^2 + \alpha \left(\left\|\nabla \theta^{k+1}\right\|^2 - \left\|\nabla \theta^0\right\|^2 \right)  \\
          \lesssim &   \sum_{n=1}^{k+1} \tau \left(\sum_{m=1}^n \tau\left\|\frac{\theta^{m}-\theta^{m-1}}{\tau}\right\|^2 \right)+ M_0\tau^2  + M_\rho + \norm{u^0-u_h^0}^2.
        \end{aligned}
      \end{equation*}
      The discrete Gronwall inequality \cite[Lemma 5.1]{heywood_finite-element_1990} yields
      \begin{equation}\label{eq:semi_fem_theta_full_grad}
          \sum_{n=1}^{k+1}  \tau \left\|\frac{\theta^{n}-\theta^{n-1}}{\tau}\right\|^2  + \alpha \left(\left\|\nabla \theta^{k+1}\right\|^2 - \left\|\nabla \theta^0\right\|^2 \right)  
          \lesssim \norm{u^0-u_h^0}^2 + M_\rho  +  M_0\tau^2.
      \end{equation}
      At this step, we are ready to return to \eqref{eq:semi_fem_theta_intermediate} and invoking once more the discrete Gronwall inequality, we have 
      \begin{equation}\label{eq:semi_fem_full_theta}
          \left\|\theta^{k+1}\right\|^2 
          \lesssim M_\rho  +  M_0\tau^2 + \left\|\theta^{0}\right\|_1^2 + \norm{u^0-u_h^0}^2   .
      \end{equation}
      In addition, from \eqref{eq:semi_fem_w_error_raw}, \eqref{eq:semi_fem_theta_full_grad} and $u^0_h = R_h u^0$, we obtain 
      \begin{equation*}
        \begin{aligned}
          \left\|w^{k+1}-w_h^{k+1}\right\|^2 \lesssim &\norm{u^0-u_h^0}^2 + M_\rho  +  M_0\tau^2 .
        \end{aligned}
      \end{equation*} 
      Finally, combining \cref{eq:semi_fem_full_theta,eq:semi_fem_theta_full_grad,eq:semi_fem_semi_u_err_decomp,eq:semi_fem_rho_error,eq:semi_fem_rhot_error}, we conclude that 
      \begin{equation*}
          \left\| u^k- u_h^k\right\|_1^2 
          \lesssim  M_0 \tau^2   + h^{2}\left(\left\|u\right\|^2_{\infty;2}+ \left\| \frac{\partial u}{\partial t}\right\|_{J_{k+1};1}^2 + \left\|u^0\right\|^2_1\right).
      \end{equation*}
      If assume, moreover, that $u^0\in H^2\left(\Omega\right)$ and $\frac{\partial u}{ \partial t} \in L^2\left(0,T;H^2\left(\Omega\right)\right)$, then 
      \begin{equation*}
        \left\| u^k- u_h^k\right\|^2 
        \lesssim M_0\tau^2 + h^{4}\left(\left\|u\right\|^2_{\infty;2}+  \left\| \frac{\partial u}{\partial t}\right\|_{J_{k+1};2}^2  + \left\|u^0\right\|^2_2\right).
      \end{equation*}
    \end{proof}

  \subsection{Error analysis on quasilinear equations with linear play hysteresis}\label{subsec:quasi_fem}
    We begin this section with a priori estimates for the fully discrete solution, which are required for the subsequent finite element error analysis. These bounds can be derived in essentially the same manner as the stability estimates for the time-discrete problem proved in \cref{prop:preli_quasi_pde_wp}.  
    For completeness, we present the argument here.

    \begin{lemma}\label{lem:quasi_fem_full_apriori_estimate}
      Assume that the assumptions in \cref{prop:preli_quasi_pde_wp} hold. Then
      \begin{equation*}
        \sum_{n=1}^{K}\tau\left\|\frac{u_h^{n} -  u_h^{n-1}}{\tau}\right\|^2 + \max_{1\leq n \leq K} \left\| \nabla u_h^{n}\right\| + \sum_{n=1}^{K}\left\|\nabla u_h^{n} - \nabla u_h^{n-1}\right\|^2\lesssim \left\|\nabla u_h^{0}\right\|^2 + \int_0^{T}\left\|f\right\|^2 \mathrm{d} t.
      \end{equation*}
    \end{lemma}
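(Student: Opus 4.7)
The plan is to mimic, at the fully discrete level, the continuous energy identity obtained by testing \eqref{eq:preli_quasi_pde_weak} with $\partial_t u$. Concretely, I test \eqref{eq:quasi_fem_full} with $\varphi_h = u_h^{k+1}-u_h^k \in V_h$. The parabolic term gives $\|u_h^{k+1}-u_h^k\|^2/\tau$. The symmetric bilinear form telescopes via the polarization identity
$$a(u_h^{k+1}, u_h^{k+1}-u_h^k) = \tfrac{1}{2}\bigl[a(u_h^{k+1},u_h^{k+1}) - a(u_h^k,u_h^k)\bigr] + \tfrac{1}{2}a(u_h^{k+1}-u_h^k, u_h^{k+1}-u_h^k),$$
so ellipticity delivers both the $\|\nabla u_h^n\|^2$-bound and the $\sum_n \|\nabla(u_h^n-u_h^{n-1})\|^2$ contribution. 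The source term is handled by Young's inequality, absorbing half of the discrete time-derivative term into the left-hand side; the residual $\sum_k \tau\|\bar f^{k+1}\|^2$ is controlled by $\int_0^T \|f\|^2\,\mathrm{d}t$ via Cauchy--Schwarz applied to the cell averages defining $\bar f^{k+1}$.

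The decisive step is that the hysteresis contribution is nonnegative, i.e.,
$$\left(\frac{w_h^{k+1}-w_h^k}{\tau},\, u_h^{k+1}-u_h^k\right) \geq 0.$$
This is where piecewise monotonicity (\cref{asp:preli_Op_pw_monotone}) enters: for every $x\in\Omega$, the piecewise-linear time reconstruction $u_{h\tau}(x,\cdot)$ is affine, hence monotone, on each subinterval $[t_k,t_{k+1}]$, so $w_{h\tau}(x,\cdot) = [\mathcal{F}(u_{h\tau}(x,\cdot),w^0(x))](\cdot)$ is monotone in the same direction on $[t_k,t_{k+1}]$. This yields $(u_h^{k+1}(x)-u_h^k(x))(w_h^{k+1}(x)-w_h^k(x))\geq 0$ pointwise in $x$, and integration over $\Omega$ gives the claim. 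The argument uses only \cref{asp:preli_Op_pw_monotone}; neither the linear-play specialization nor the $L^1$-accretivity plays any role in this a priori bound.

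Combining the four contributions, summing the time index from $0$ to $n-1$, and using the ellipticity bounds $a(v,v)\gtrsim\|\nabla v\|^2$ together with $a(u_h^0,u_h^0)\lesssim\|\nabla u_h^0\|^2$ (both immediate for the prototype $\mathcal{A}=-\Delta$ with the homogeneous Dirichlet or Neumann boundary conditions considered here), one obtains, uniformly in $n$,
$$\sum_{m=1}^{n}\tau\left\|\frac{u_h^m-u_h^{m-1}}{\tau}\right\|^2 + \|\nabla u_h^{n}\|^2 + \sum_{m=1}^{n}\|\nabla(u_h^m-u_h^{m-1})\|^2 \lesssim \|\nabla u_h^0\|^2 + \int_0^T \|f\|^2\,\mathrm{d}t.$$
Taking the maximum over $n$ on the middle term yields the stated estimate. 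Notably, no discrete Gr\"onwall argument is required, since all left-hand-side terms are nonnegative and the right-hand-side bound is $n$-independent. The main obstacle is conceptual rather than computational: recognizing that piecewise monotonicity of $\mathcal{W}$ transfers, at the piecewise-linear-in-time reconstruction level, into the simple pointwise sign condition above. Once this is in place, the remainder is a textbook discrete parabolic energy argument.
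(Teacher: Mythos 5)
Your proposal is correct and follows essentially the same route as the paper: test the fully discrete equation with $u_h^{k+1}-u_h^k$, use piecewise monotonicity (via the affine-in-time reconstruction) to drop the hysteresis increment as a nonnegative pointwise product, telescope the bilinear form, absorb the source by Young/Cauchy--Schwarz with $\sum_k\tau\|\bar f^{k+1}\|^2\le\int_0^T\|f\|^2\,\mathrm{d}t$, and sum over $k$ without any Gr\"onwall step. Your write-up merely spells out the pointwise sign argument and the polarization identity more explicitly than the paper does.
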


    \begin{proof}
      Taking $\varphi_h = u_h^{k+1} -  u_h^{k}$ in \eqref{eq:quasi_fem_full} and using the piecewise monotonicity in \cref{asp:preli_Op_pw_monotone}, we obtain
      \(
        \left(w_{h}^{k+1} - w_{h}^{k}\right) \left( u_h^{k+1} -  u_h^{k}\right) \geq 0
      \)
      and 
      \begin{equation*}
        \tau \left\| \frac{ u_h^{k+1} -  u_h^{k}}{\tau}\right\|^2  + \alpha \left\| \nabla u_h^{k+1} -  \nabla u_h^{k} \right\|^2  +   a \left(u_h^{k+1},u_h^{k+1}\right)  -  a \left(u_h^{k},u_h^{k}\right) \leq  \tau \left\|f^{k+1}\right\|^2 
      \end{equation*}
      by Cauchy-Schwarz inequality. The estimate follows from summation over $k$.
    \end{proof}

    The following structural property, satisfied by the linear play operator and its weighted integral extensions (see \cite[Prop III.2.8 and Prop III.4.3]{visintinDifferentialModelsHysteresis1994}), is the key ingredient in handling the hysteresis nonlinearity in \eqref{eq:quasi_fem_full}.
    \begin{proposition}[Monotonicity-type property]
      \label{prop:preli_Op_linear_play_monotonicity}
      Let $\mathcal{F}$ be the linear play operator defined in \eqref{eq:preli_Op_linear_play}.
      For $i=1,2$, let $\left(u_i, w_i^0\right) \in W^{1,1}(0, T) \times \mathbb{R}$, and set $w_i:=\mathcal{F}\left(u_i, w_i^0\right), \bar{u}:=u_1-u_2$, $\bar{w}:=w_1-w_2$. Then for any interval $\left[t_1, t_2\right] \subset[0, T]$,
      \[
      \int_{t_1}^{t_2} \frac{\mathrm{d} \bar{w}}{\mathrm{d} t} \bar{u} \, \mathrm{d} t \geq \frac{1}{2 c}\left[\bar{w}\left(t_2\right)^2-\bar{w}\left(t_1\right)^2\right].
      \]
    \end{proposition}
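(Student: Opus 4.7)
The plan is to exploit the variational inequality that characterizes the linear play operator in \eqref{eq:preli_Op_linear_play} by testing the inequality associated with $w_1$ using the internal state of $w_2$ and vice versa. For $i=1,2$, set $z_i:=u_i-w_i/c$, so that the first line of \eqref{eq:preli_Op_linear_play} gives $z_i(t)\in[a,b]$ for every $t$, and the variational inequality reads
\[
\frac{dw_i}{dt}(t)\bigl(z_i(t)-v\bigr)\ge 0,\qquad \forall v\in[a,b],\ \text{a.e.\ in }(0,T).
\]
Since $u_i\in W^{1,1}(0,T)$ and Assumption~\ref{asp:preli_Op_pw_lipschitz} combined with \eqref{eq:preli_Wkp} ensures $w_i\in W^{1,1}(0,T)$, both $z_i$ belong to $W^{1,1}(0,T)$ and the derivatives $dw_i/dt$ exist a.e.

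Because $z_1(t)$ and $z_2(t)$ both lie in $[a,b]$ for a.e.\ $t$, they are mutually admissible test values. Substituting $v=z_2$ in the inequality for $w_1$ and $v=z_1$ in the one for $w_2$, and adding the results, yields
\[
\frac{d\bar{w}}{dt}(z_1-z_2)\ge 0\quad\text{a.e.\ in }(0,T).
\]
Rewriting $z_1-z_2=\bar{u}-\bar{w}/c$ and using the chain rule for Sobolev functions $\bar{w}\,\tfrac{d\bar{w}}{dt}=\tfrac12\tfrac{d}{dt}(\bar{w}^2)$ a.e., this becomes
\[
\frac{d\bar{w}}{dt}\,\bar{u}\;\ge\;\frac{1}{c}\,\bar{w}\,\frac{d\bar{w}}{dt}\;=\;\frac{1}{2c}\frac{d}{dt}\bigl(\bar{w}^{\,2}\bigr)\quad\text{a.e.\ in }(0,T).
\]
Integrating over $[t_1,t_2]$ and exploiting the absolute continuity of $\bar{w}^{\,2}\in W^{1,1}(0,T)$ yields the claimed bound.

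The main technical subtlety is justifying the a.e.\ pointwise form of the variational inequality for merely $W^{1,1}$-inputs (the definition in \eqref{eq:preli_Op_linear_play} is naturally phrased for absolutely continuous inputs and in a distributional sense); this is handled by a standard density argument together with the regularity $w_i\in W^{1,1}$ granted by Assumption~\ref{asp:preli_Op_pw_lipschitz}. Apart from this, the argument is simply a symmetric coupling of the two variational inequalities, and no strong monotonicity structure beyond what is already encoded in \eqref{eq:preli_Op_linear_play} is required.
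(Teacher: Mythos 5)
Your proof is correct and follows essentially the same route as the paper: the paper likewise tests the variational inequality for $w_{3-i}$ with $v=u_i-w_i/c\in[a,b]$, adds the two resulting inequalities, and integrates using $\bar{w}\,\frac{\mathrm{d}\bar{w}}{\mathrm{d}t}=\frac12\frac{\mathrm{d}}{\mathrm{d}t}\bigl(\bar{w}^2\bigr)$. Your additional remarks on the $W^{1,1}$ regularity of $w_i$ are a harmless elaboration of details the paper leaves implicit.
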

    \begin{proof}
      For $i=1,2$, choose $v = u_i - \frac{w_i}{c} \in [a,b]$ in  \eqref{eq:preli_Op_linear_play} to obtain 
      \[
        \frac{\mathrm{d} w_{3-i}}{ \mathrm{d} t} \left(u_{3-i} - u_i \right) \geq  \frac{\mathrm{d} w_{3-i}}{ \mathrm{d} t} \left( \frac{w_{3-i} - w_i}{c}\right)  .
      \]
      Adding both inequalities and integrating over $\left[t_1, t_2\right]$ yields the stated result. 
    \end{proof}

    \begin{theorem}\label{thm:quasi_fem_fully}
      Assume the assumptions in \cref{prop:preli_quasi_pde_wp,asp:preli_Op_lipschitz,prop:preli_Op_linear_play_monotonicity} hold, $u^0_h = R_h u^0$ and $f\in H^1\left(0,T;L^2\left(\Omega\right)\right)$.  
      Then for $k=1,2,\dots,K$, we have the following error estimate:
      \begin{equation*}
        \|u(t_k) - u_h^k\|_{L^2\left(\Omega\right)} + \norm{w(t_k)-w_h^k}_{L^2\left(\Omega\right)} + \int_{0}^{t_k} \left\| u\qty(t) - u_{h\tau}(t) \right\|_{H^1\left(\Omega\right)}^2  \mathrm{d} t \lesssim \tau^\frac{1}{2} + h . 
      \end{equation*}
    \end{theorem}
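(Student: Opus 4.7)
The strategy parallels the semilinear analysis in Theorem~\ref{thm:semi_fem_fully}, using the elliptic projection splitting $u - u_{h\tau} = \rho + \theta$ of \eqref{eq:semi_fem_semi_u_err_decomp}, but with the integrated monotonicity of Proposition~\ref{prop:preli_Op_linear_play_monotonicity} replacing the role of $L^2$-coercivity of the zero-order term. I set $e^{n+1} := u(t_{n+1}) - u_h^{n+1}$ and $e_w^{n+1} := w(t_{n+1}) - w_h^{n+1}$. Integrating \eqref{eq:preli_quasi_pde_weak} over $I_{n+1}$ with a time-constant test $\varphi_h \in V_h$ and subtracting $\tau$ times \eqref{eq:quasi_fem_full} yields the error identity
\begin{equation*}
(e^{n+1}-e^n,\varphi_h) + (e_w^{n+1}-e_w^n,\varphi_h) + \int_{I_{n+1}} a\bigl(u(t)-u_h^{n+1},\varphi_h\bigr)\,\mathrm{d}t = 0.
\end{equation*}
I then test with $\varphi_h=\theta^{n+1}$, split $u(t)-u_h^{n+1} = \rho(t)+\theta(t) + (u_{h\tau}(t)-u_h^{n+1})$, and use \eqref{eq:semi_fem_ritz_def} to reduce $a(\rho(t),\theta^{n+1})$ to the lower-order term $-\lambda(\rho(t),\theta^{n+1})$. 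Summation-by-parts on $(e^{n+1}-e^n,\theta^{n+1})$ and coercivity of $a(\theta^{n+1},\theta^{n+1})$ then generate $\tfrac12\|\theta^{k+1}\|^2$ and $\alpha\sum_{n=0}^k\tau\|\nabla\theta^{n+1}\|^2$ on the left.

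The crucial new ingredient is the hysteresis increment $\sum_n (e_w^{n+1}-e_w^n,\theta^{n+1}) = \sum_n \int_{I_{n+1}} (\partial_t(w-w_{h\tau}),\theta^{n+1})\,\mathrm{d}t$. Writing $\theta^{n+1} = (u-u_{h\tau})(t) - \rho(t) - (\theta(t)-\theta^{n+1})$ on $I_{n+1}$ and invoking Proposition~\ref{prop:preli_Op_linear_play_monotonicity} pointwise in $x$ for the pair $(u(x,\cdot), u_{h\tau}(x,\cdot)) \in W^{1,1}(0,T)^2$, Fubini delivers
\begin{equation*}
\sum_{n=0}^k \int_{I_{n+1}} \bigl(\partial_t(w-w_{h\tau}),\,u-u_{h\tau}\bigr)\,\mathrm{d}t \,\geq\, \frac{1}{2c}\bigl(\|e_w^{k+1}\|^2 - \|e_w^0\|^2\bigr) .
\end{equation*}
The two remainders generated by $\rho(t)$ and by the fluctuation $\theta(t)-\theta^{n+1}$ are handled by integration by parts in time: the piecewise-linear structure of $\theta(t)$ produces a factor of $\tau$, while $\|\partial_t(w-w_{h\tau})\|_{L^2(0,T;L^2)}$ is bounded using the piecewise Lipschitz property \eqref{eq:preli_Wkp} and the a priori estimates of Lemma~\ref{lem:quasi_fem_full_apriori_estimate}. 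The time-averaging residual $\tau^{-1}\int_{I_{n+1}} a(u(t)-u^{n+1},\theta^{n+1})\,\mathrm{d}t$ is controlled by Cauchy--Schwarz against $\|\nabla\theta^{n+1}\|$, with $\|u(t)-u^{n+1}\|_{L^2(I_{n+1};V)} = O(\tau^{1/2})$ by $u \in H^1(0,T;L^2) \cap L^\infty(0,T;V)$ and standard interpolation.

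Combining these bounds with \eqref{eq:semi_fem_rho_error}--\eqref{eq:semi_fem_rhot_error}, absorbing $\|\nabla\theta^{n+1}\|^2$ terms via Young's inequality, and applying discrete Gronwall, I would arrive at
\begin{equation*}
\|\theta^{k+1}\|^2 + \|e_w^{k+1}\|^2 + \sum_{n=0}^k \tau\|\nabla\theta^{n+1}\|^2 \,\lesssim\, \tau + h^2 .
\end{equation*}
The $L^2$-bound on $u(t_k)-u_h^k$ follows by triangle inequality with $\rho^k$, the integrated $H^1$-error by combining $\sum_n \tau\|\nabla\theta^{n+1}\|^2$ with $\int_0^{t_k}\|\rho\|_1^2\,\mathrm{d}t$ and the time interpolation of $\theta$, and the $L^2$-bound on $w(t_k)-w_h^k$ either directly from $\|e_w^{k+1}\|$ above or alternatively via the Lipschitz continuity of $\mathcal{W}$ (Assumption~\ref{asp:preli_Op_lipschitz}) combined with the already-controlled temporal increment of $u-u_{h\tau}$, precisely as in the proof of Theorem~\ref{thm:semi_fem_fully}. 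Taking square roots gives the desired $\tau^{1/2}+h$.

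The main obstacle is the structural mismatch between Proposition~\ref{prop:preli_Op_linear_play_monotonicity}, which is an inherently time-integrated inequality against the continuous error $u-u_{h\tau}$, and the discrete testing framework, which naturally couples the nodal jumps $w_h^{n+1}-w_h^n$ with the constant-in-time test $\theta^{n+1}$. Controlling the resulting correction integrals on each $I_{n+1}$, together with the weak temporal regularity $u \in H^1(0,T;L^2) \cap L^\infty(0,T;V)$ that is all Proposition~\ref{prop:preli_quasi_pde_wp} provides in the quasilinear case, is precisely what limits the rate to $O(\tau^{1/2})$ rather than the $O(\tau)$ of the semilinear theorem and also forces the restriction to linear play hysteresis.
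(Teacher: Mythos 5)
Your overall architecture (elliptic-projection splitting, Proposition~\ref{prop:preli_Op_linear_play_monotonicity} applied pointwise in $x$ to the pair $(u,u_{h\tau})$, piecewise Lipschitz bounds for $\partial_t w$ and $\partial_t w_{h\tau}$, Lemma~\ref{lem:quasi_fem_full_apriori_estimate}, discrete Gronwall) matches the paper's, but the specific testing/telescoping structure you chose creates a genuine gap. You test the $I_{n+1}$-integrated error identity with the time-constant $\theta^{n+1}$ and telescope $\tfrac12\|\theta^{k+1}\|^2$ from $(e^{n+1}-e^n,\theta^{n+1})$. Since $e^{n+1}-e^n=(\theta^{n+1}-\theta^n)+(\rho^{n+1}-\rho^n)$, this leaves the term $\sum_{n}(\rho^{n+1}-\rho^n,\theta^{n+1})$, which you never address. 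To bound it at the claimed rate you would need either $\|\partial_t\rho\|$ to carry a power of $h$ (i.e.\ $\partial_t u\in L^2(0,T;H^1)$) or, after Abel summation, pointwise-in-time control of $\|\rho^n\|$ of order $h^2$ (i.e.\ $u\in L^\infty(0,T;H^2)$); neither is provided by \cref{prop:preli_quasi_pde_wp} in the quasilinear case, where only $\partial_t u\in L^2(0,T;L^2)$, $u\in L^\infty(0,T;V)\cap L^2(0,T;H^2)$ are available. With that regularity the term is only $O(1)$ (or $O(h^2/\tau)$ via Abel summation with the $L^\infty(V)$ bound), which destroys the estimate unless a mesh condition such as $h^2\lesssim\tau$ is imposed. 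The same defect appears in your treatment of the elliptic residual: your split charges the temporal increment to the exact solution, and the required bound $\sum_n\|u-u^{n+1}\|^2_{I_{n+1};1}\lesssim\tau$ amounts to a H\"older-$\tfrac12$ modulus of continuity of $u$ with values in $H^1$, again not supplied by the stated regularity; your remark that $\|u(t)-u^{n+1}\|_{L^2(I_{n+1};V)}=O(\tau^{1/2})$ is true per interval by mere boundedness but sums to $O(1)$, not $O(\tau)$.

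The paper's proof is arranged precisely to avoid both pitfalls. It tests with the time-dependent $\theta(t)$ and then rewrites every problematic pairing so that (i) the time derivative of the error always hits the full error, giving the telescoping $\tfrac12\|u^k-u_h^k\|^2-\tfrac12\|u^{k-1}-u_h^{k-1}\|^2$ directly, while $\rho$ only ever appears undifferentiated in time (bounded through $\|\rho\|_{I_k;0}\lesssim h^2\|u\|_{I_k;2}$, summable under $u\in L^2(0,T;H^2)$); and (ii) all temporal-increment terms measured in $H^1$ are charged to the discrete solution, $u_{h\tau}(t)-u_h^k$, which is controlled by the a priori bound $\sum_n\|\nabla u_h^n-\nabla u_h^{n-1}\|^2\lesssim 1$ of Lemma~\ref{lem:quasi_fem_full_apriori_estimate}, yielding the $O(\tau)$ contributions without any extra smoothness of $u$. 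Your handling of the hysteresis increment itself (writing $\theta^{n+1}=(u-u_{h\tau})(t)-\rho(t)-(\theta(t)-\theta^{n+1})$ and invoking the integrated monotonicity) is sound and close to the paper's decomposition of the term $(\square)$; if you replace the $\|\theta\|^2$-telescoping by the full-error pairing $\int_{I_{n+1}}(\partial_t(u-u_{h\tau}),(u-u_{h\tau}))$ and adopt the discrete-increment splitting for the elliptic term, your argument closes and coincides in substance with the paper's.
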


    \begin{proof}
      We take $\varphi_h = \theta(t) \in V_h$ in \eqref{eq:preli_quasi_pde_weak} and \eqref{eq:quasi_fem_full}, and integrate over $(t_{k-1}, t_k)$. 
      Subtracting the two equations yields
      \begin{equation*}
        \begin{aligned}
          &\underbrace{\int_{t_{k-1}}^{t_k} \left(\pdv{u}{t}\qty(t)- \frac{u_h^{k} -  u_h^{k-1}}{\tau}, \theta(t) \right)  \mathrm{d}  t}_{\left(\vartriangle\right)} + \underbrace{\int_{t_{k-1}}^{t_k} a\left(  u\qty(t) - u_h^{k}, \theta(t) \right)  \mathrm{d}  t}_{\left(\triangledown\right)}  \\ 
          = &-\underbrace{\int_{t_{k-1}}^{t_k}\left(\pdv{w}{t}\qty(t)- \frac{w_h^{k} -  w_h^{k-1}}{\tau}, \theta(t) \right)  \mathrm{d}  t}_{\left(\square\right)} + \underbrace{\int_{t_{k-1}}^{t_k}\left(f - \bar f^k, \theta(t) \right)  \mathrm{d}  t}_{(\circ)}.
        \end{aligned}
      \end{equation*}
      To estimate the right-hand side term $(\square)$, noticing $u_{h\tau} \in H^1\left(0,T;L^2\left(\Omega\right)\right)$ implies $w_{h\tau} \in H^1\left(0,T;L^2\left(\Omega\right)\right)$ by \cref{asp:preli_Op_pw_lipschitz}, we decompose
      \begin{equation*}\small 
        \begin{aligned}
          (\square) = &\int_{t_{k-1}}^{t_k}\left( \pdv{\qty(w-w_{h\tau})}{t}\qty(t),  \qty(u-u_{h\tau})\qty(t)\right) \mathrm{d}  t   
           -\int_{t_{k-1}}^{t_k}\left( \pdv{w}{t}\qty(t) - \frac{w_h^{k} -  w_h^{k-1}}{\tau},  \rho\qty(t)\right) \mathrm{d}  t \\ 
           & + \int_{t_{k-1}}^{t_k}\left( \pdv{w_{h\tau}}{t}\qty(t) - \frac{w_h^{k} -  w_h^{k-1}}{\tau} ,  \qty(u-u_{h\tau})\qty(t)\right) \mathrm{d}  t 
           =: (\mathrm{I})  + (\mathrm{II}) +  (\mathrm{III}).
        \end{aligned}
      \end{equation*}
      Using the monotonicity-type property in \cref{prop:preli_Op_linear_play_monotonicity}, we have
      \begin{equation*}
        (\mathrm{I}) \ge \frac{1}{2c}\norm{w^k-w_{h}^k}^2 - \frac{1}{2c}\norm{w^{k-1}-w_{h}^{k-1}}^2.
      \end{equation*}
      By the piecewise Lipschitz continuity in \cref{asp:preli_Op_pw_lipschitz}, we obtain 
      \begin{equation*}
        \left|\pdv{w}{t}\qty(x,t)\right| \leq L \left|\pdv{u}{t}\qty(x,t)\right|, \quad \left|\frac{w_h^k - w_h^{k-1}}{\tau}\right| \le L\left|\frac{u_h^k - u_h^{k-1}}{\tau}\right|,
      \end{equation*} 
      and consequently, 
      \begin{equation*}
        \left|(\mathrm{II})\right| \lesssim  \left\|  \rho \right\|_{I_k;0} \left(\left\| \frac{\partial u}{\partial t}  \right\|_{I_k;0} + \tau^\frac{1}{2}\left\|\frac{ u_{h}^{k} -  u_{h}^{k-1}}{\tau}\right\|\right).
      \end{equation*}
      Consider for $\left(\mathrm{III}\right)$ the decomposition 
      \begin{equation}\label{eq:quasi_fem_full_u-uhtau_decomp}
          u(t) - u_{h\tau}(t) =  \left( u(t) -  u^k\right) +  \left(u^k - u_h^{k}\right) +  \left( u_h^k - u_{h\tau}(t)\right). 
      \end{equation}
      A direct computation gives 
      \begin{equation*}
        \left\|  u -  u^k \right\|_{I_k;0}^2  \leq \tau^{2} \left\| \frac{\partial u}{\partial t}  \right\|_{I_k;0}^2,   \quad 
        \left\| u_h^k - u_{h\tau} \right\|_{I_k;0}^2 \leq \tau^{{3}}  \left\| \frac{u_h^k - u_h^{k-1}}{\tau} \right\|^2,
      \end{equation*}
      and since $u^k - u_h^{k}$ is independent of $t$, it follows that
      \begin{equation*}
        \int_{t_{k-1}}^{t_k} \left(\pdv{w_{h\tau}}{t}\qty(t) - \frac{w_h^{k} -  w_h^{k-1}}{\tau} , u^k - u_h^{k} \right) \mathrm{d}  t = 0.
      \end{equation*}
      Since $u_{h\tau}$ is affine in $[t_{k-1}, t_k]$, by the piecewise Lipschitz continuity we obtain 
      \[
        \qty|\pdv{w_{h\tau}}{t}\qty(x,t)| \leq L  \qty|\pdv{u_{h\tau}}{t}\qty(x,t)| = L\left|\frac{u_h^k - u_h^{k-1}}{\tau}\right|.
      \]
      Substituting the above bounds gives
      \begin{equation*}
          \left|(\mathrm{III})\right| \lesssim \tau \left(  \tau^\frac{1}{2}\left\|\frac{ u_{h}^{k} -  u_{h}^{k-1}}{\tau}\right\| \left\| \frac{\partial u}{\partial t}  \right\|_{I_k;0} + \tau\left\|\frac{ u_{h}^{k} -  u_{h}^{k-1}}{\tau}\right\|^2  \right).
      \end{equation*}
      For the linear term $\left(\triangledown\right)$, by recalling \cref{eq:semi_fem_ritz_def} and $R_h u(t) - u_h^{k} \in V_h$, we have 
      \begin{equation*}
        \begin{aligned}
          \left(\triangledown\right) = &\int_{t_{k-1}}^{t_k} a\left(  u\qty(t) - u_h^{k}, u(t) - u_{h\tau}(t)  \right)  \mathrm{d}  t  -  \int_{t_{k-1}}^{t_k} a\left(  \rho(t) , \rho(t)  \right) \mathrm{d}  t \\ 
          & + \lambda \int_{t_{k-1}}^{t_k}  \left(  R_h u(t) - u_h^k , \rho(t)  \right) \mathrm{d}  t =: (a) + (b) + (c).
        \end{aligned}
      \end{equation*}
      Cauchy-Schwarz inequality gives
      \begin{equation*}
        \begin{aligned}
          (a) = & \int_{t_{k-1}}^{t_k} a\left(  u\qty(t) - u_{h\tau}(t), u(t) - u_{h\tau}(t)  \right)  \mathrm{d}  t  +  \int_{t_{k-1}}^{t_k} a\left( u_{h\tau}(t) - u_h^{k}, u(t) - u_{h\tau}(t)  \right)  \mathrm{d}  t  \\ 
          \geq & \alpha  \left\| \nabla u - \nabla u_{h\tau} \right\|_{I_k;0}^2  - \tau^{\frac{1}{2}} \beta \left\| u_h^k - u_{h}^{k-1} \right\|_1 \left\|  u -  u_{h\tau} \right\|_{I_k;1} \\ 
          \geq &  \frac{\alpha}{2}\left\| \nabla u - \nabla u_{h\tau} \right\|_{I_k;0}^2  -  \frac{\tau \beta^2}{2\alpha} \left\| u_h^k - u_{h}^{k-1} \right\|_1^2 - \frac{\alpha}{2}\left\|  u -  u_{h\tau} \right\|_{I_k;0}^2,  
        \end{aligned}
      \end{equation*}
      where by the decomposition  \eqref{eq:quasi_fem_full_u-uhtau_decomp}, 
      \begin{equation}\label{eq:quasi_fem_full_L2_raw}
        \begin{aligned}
          \left\|  u -  u_{h\tau} \right\|_{I_k;0}^2 \lesssim \tau^2 \left\| \frac{\partial u}{\partial t}  \right\|_{I_k;0} ^2 + \tau \left\| u^k-u^k_h  \right\|^2 + \tau^2\left(\tau\left\|\frac{ u_{h}^{k} -  u_{h}^{k-1}}{\tau}\right\|^2 \right).
        \end{aligned}
      \end{equation}
      In addition, by $ R_h u(t) - u_h^k  = -\rho(t) + \left(u(t) - u^k\right) + \left(u^k - u_h^k\right)$,
      \begin{equation*}
        (c) \gtrsim - \left\|  \rho \right\|_{I_k;0}^2 -  \tau^2  \left\| \frac{\partial u}{\partial t}  \right\|_{I_k;0}^2  -  \tau \left\| u^k-u^k_h  \right\|^2 .  
      \end{equation*}
      For the linear term $\left(\vartriangle\right)$, by the decomposition \eqref{eq:semi_fem_semi_u_err_decomp}, we have 
      \begin{equation*}
        \begin{aligned}
        \left(\vartriangle\right) = &\int_{t_{k-1}}^{t_k} \qty(\pdv{\left(u-u_{h\tau }\right)}{t}\qty(t), \left(u-u_{h\tau }\right)(t) ) -  \qty(\pdv{u}{t}\qty(t)- \frac{\partial u_{h\tau }}{\partial t}(t), \rho(t))\: \mathrm{d}  t \\
         \geq & \frac{1}{2}\left\| u^k -u_h^k \right\|^2 - \frac{1}{2}\left\| u^{k-1} -u_h^{k-1} \right\|^2  
         - \left\|  \rho \right\|_{I_k;0} \left(\left\| \frac{\partial u}{\partial t}  \right\|_{I_k;0} +  \tau^\frac{1}{2}\left\|\frac{ u_{h}^{k} -  u_{h}^{k-1}}{\tau}\right\| \right),
        \end{aligned}
      \end{equation*}
      For the source term $\left(\circ\right)$, combining \eqref{eq:semi_fem_semi_u_err_decomp} and the estimate \eqref{eq:quasi_fem_full_L2_raw}, it follows
      \begin{equation*}
        \begin{aligned}
        \left(\circ\right) = &\int_{t_{k-1}}^{t_k} \qty(f(t) - \bar f^k, \left(u-u_{h\tau }\right)(t) ) -  \qty(f(t) - \bar f^k, \rho(t))\: \mathrm{d}  t \\
         \lesssim & \tau^2 \left\| \frac{\partial f}{\partial t}  \right\|_{I_k;0}^2  + \tau \left\| u^k-u^k_h  \right\|^2 + \tau^2\left(\left\| \frac{\partial u}{\partial t}  \right\|_{I_k;0} ^2+\tau\left\|\frac{ u_{h}^{k} -  u_{h}^{k-1}}{\tau}\right\|^2 \right) +  \left\|  \rho \right\|_{I_k;0}^2.
        \end{aligned}
      \end{equation*}
      Combining all the estimates above, we obtain
      \begin{equation*}
        \begin{aligned}
          & \left\| u^k -u_h^k \right\|^2 - \left\| u^{k-1} -u_h^{k-1} \right\|^2 + \alpha \left\| \nabla u -  \nabla u_{h\tau} \right\|_{I_k;0}^2   \\ 
          & + \frac{1}{c}\left(\norm{w^k-w_{h}^k}^2 - \norm{w^{k-1}-w_{h}^{k-1}}^2\right) \\ 
          \lesssim &  \tau \left\| u^k-u^k_h  \right\|^2  + \left\|  \rho \right\|_{I_k;1}^2  + \left\| \rho \right\|_{I_k;0} \left(\left\| \frac{\partial u}{\partial t}  \right\|_{I_k;0} +  \tau^\frac{1}{2}\left\|\frac{ u_{h}^{k} -  u_{h}^{k-1}}{\tau}\right\| \right) +  \tau \left\| u_h^k - u_{h}^{k-1} \right\|_1^2     \\ 
          & + \tau^2 \left( \left\| \frac{\partial u}{\partial t}  \right\|_{I_k;0}^2 +  \left\|\frac{ u_{h}^{k} -  u_{h}^{k-1}}{\tau}\right\|^2 + \left\| \frac{\partial f}{\partial t}  \right\|_{I_k;0}^2 \right)
          + \tau^\frac{3}{2}\left\|\frac{ u_{h}^{k} -  u_{h}^{k-1}}{\tau}\right\|  \left\| \frac{\partial u}{\partial t}  \right\|_{I_k;0} .
        \end{aligned}
      \end{equation*}
      Summing over $k$ and invoking \cref{lem:quasi_fem_full_apriori_estimate}, the discrete Gronwall inequality gives
      \begin{multline*}
          \left\|  u^n -u_h^n \right\|^2 + \left\| \nabla u - \nabla u_{h\tau}\right\|_{J_n;0}^2  + \norm{w^n-w_{h}^n}^2   \\ 
          \lesssim   \left\| u^0 - u_h^0 \right\|^2  + \norm{w^0-w_{h}^0}^2   +  \tau M_1 +  h^2 M_2,
      \end{multline*}
      where we set $M_0 = \left\|\nabla u^0_h \right\|^2 + \left\|f \right\|_{J_k;0}^2 +  \left\| \frac{\partial u}{\partial t}  \right\|_{J_K;0}^2 $, $M_1 = M_0 + \left\|\frac{\partial f}{\partial t} \right\|_{J_k;0}^2$ and $M_2 = M_0 + \left\|u \right\|_{J_K;2}^2$. Finally, \cref{asp:preli_Op_lipschitz} yields
      \[
        \norm{w^0-w_{h}^0}^2 \lesssim \left\| u^0 - u_h^0 \right\|^2, 
      \]
      which completes the proof.
    \end{proof}

    \begin{remark}
      The incorporation of Preisach hysteresis in electromagnetic simulations is of significant industrial relevance.  
      Using the operator $\mathcal{F}_{r}$ defined in \eqref{eq:preli_Op_linear_play} with parameters 
      $b = -a =r$, $c=1$, and $w^{0}=0$, the classical Preisach model \cite{mayergoyzMathematicalModelsHysteresis1991} can be written as  
      \begin{equation}\label{eq:def_preisach}
        \mathcal{P}[v](t)
        = 2 \int_{0}^{+\infty} \int_{0}^{\mathcal{F}_{r}[v](t)} \omega(r,\sigma)\, \mathrm{d}\sigma\, \mathrm{d}r,
      \end{equation}
      where $\omega\colon \mathbb{R}^+\times \mathbb{R} \rightarrow \mathbb{R}$ is the distribution function with  $\omega(r, \sigma) = \omega(r, -\sigma) \geq 0 $ and 
      \begin{equation*}
        \int_0^{+\infty} \sup_{\sigma \in \mathbb{R}} \omega(r, \sigma) \mathrm{d} r < \infty.
      \end{equation*}
      One can verify that $\mathcal{P}$ satisfies \cref{asp:preli_Op_RateIndependence,asp:preli_Op_Causality} as well as Assumptions~\ref{asp:preli_Op_Continuity}--\ref{asp:preli_Op_pw_lipschitz}, with suitable modifications under \cref{rm:preli_Op_state}.
      Suppose further that the distribution is $\sigma$-independent, i.e., $\omega(r,\sigma)\equiv\omega_{0}(r)$.  
      Then, following the argument of \cref{prop:preli_Op_linear_play_monotonicity}, for $i=1,2$ let 
      $u_i\in W^{1,1}(0,T)$, and define $w_i:=\mathcal{P}[u_i]$,  
      $f_{i,r}:=\mathcal{F}_{r}[u_i]$,  
      $\bar{u}:=u_1-u_2$,  
      $\bar{w}:=w_1-w_2$,  
      and $\bar{f}_r:=f_{1,r}-f_{2,r}$.  
      Then, for any interval $[t_1,t_2]\subset[0,T]$, we have  
      \[
        \int_{t_1}^{t_2} \frac{\mathrm{d}\bar{w}}{\mathrm{d}t}\,\bar{u}\, \mathrm{d}t
        \ge 
          \int_{0}^{+\infty} \omega_0(r)
          \left[
            \bar{f}_{r}(t_2)^{2} - \bar{f}_{r}(t_1)^{2}
          \right] \,
          \mathrm{d}r.
      \]
      Consequently, the error estimates for $u$ derived above can be extended to this Preisach setting.
    \end{remark}

\section{Convergent Jacobian smoothing Newton solvers}\label{sec:solver}
  The purpose of this section is to develop globally convergent Newton-type methods for solving both \cref{prob:semi_fem_full,prob:quasi_fem_full} within a unified abstract model problem, namely \cref{prob:solver_model}. 
  Notably, fully discrete axisymmetric transient eddy current problems with hysteresis \cite{bermudezElectromagneticComputationsPreisach2017} as well as parabolic problems involving discontinuous hysteresis \cite{verdiNumericalApproximationHysteresis1985} also fit into the same formulation.
  The derivation of the model problem will be detailed in \cref{subsec:solver_derivation_model}.

  \begin{problem}\label{prob:solver_model}
    Given $f \in \mathbb{R}^n$, a symmetric positive-definite matrix $A \in \mathbb{R}^{n\times n} $, and a piecewise $C^{2}$-function\footnote{Here, a piecewise \(C^{k}\)-function is understood to consist of only finitely many \(C^{k}\)-smooth pieces and therefore forms a strict subclass of the \(PC^k\) functions in \cite{scholtes_introduction_2012}, the latter allowing for infinitely many such pieces.} $F\colon \mathbb{R}^n\rightarrow \mathbb{R}^n$, where each component function \(F_i(x)=\phi_i(x_i)\), and \(\phi_i\) is nondecreasing with derivative bounded almost everywhere, 
    find $u \in \mathbb{R}^n$ such that 
    \begin{equation}
      H(u) := Au + F(u) - f = 0. 
      \label{eq:solver_model}
    \end{equation}   
  \end{problem}
  \begin{remark}
    \cref{prob:solver_model} implies that $H$ is a strongly monotone operator, since
    \begin{equation}\label{eq:solver_strong_monotone}
        \left(Hu_1 - Hu_2 , u_1 - u_2\right)
        \geq\lambda_{\min}(A)\norm{u_1 - u_2}^2,
    \end{equation}
    where $\lambda_{\min}(A)$ denotes the smallest eigenvalue of $A$. Therefore, the well-posedness of \eqref{eq:solver_model} follows from \cite[Cor.~10.42]{renardy_introduction_2004}.
  \end{remark}

  Differentiating \eqref{eq:solver_model}, the Jacobian takes the simple diagonal-perturbed form
  \begin{equation} \label{eq:newton_jacobian}
  J = A +  \sum_{i=1}^n \phi_i' e_i e_i^T,  
  \end{equation}
  whose sparsity pattern coincides with that of $A$. However, the derivative $\phi_i'$ may not exist due to the piecewise-$C^{1}$ nature. Furthermore, the standard merit function used in line search,
  \begin{equation}\label{eq:solver_merit}
    \theta(x) = \frac{1}{2}\norm{H(x)}^2,
  \end{equation}
  is generally nonsmooth. Consequently, the classical damped Newton method cannot be applied directly to \cref{prob:solver_model}, and its global convergence cannot be guaranteed. 

  The essential idea, also the principal challenge, of the Jacobian smoothing method is to construct a family of smooth mappings 
  \begin{equation*}\label{eq:solver_smoothing}
    \tilde{F}(x,\varepsilon) : \mathbb{R}^n \times \mathbb{R}_+ \longrightarrow \mathbb{R}^n,
  \end{equation*}
  that approximate the nonsmooth mapping $F(x)$ while 
  ensuring both global and locally superlinear convergence.
  We say that the smoothing $\tilde{F}(x,\varepsilon)$ is admissible for \cref{prob:solver_model} if each component $\tilde{F}_i(x,\varepsilon) = \tilde \phi_i(x_i,\varepsilon)$  satisfies the following conditions:
  \begin{enumerate}
    \item (Smoothness) $\tilde \phi_i(\cdot,\varepsilon) \in C^1\left(\mathbb{R}\right)$ for all  $\varepsilon > 0$;
    \item (Approximation) There exists $\mu>0$, such that for all $ x_i\in\mathbb{R}$ and $\varepsilon > 0$,
    \begin{equation*}
      \left|\tilde \phi_i(x_i,\varepsilon)-\phi_i(x_i)\right| \le \mu\varepsilon;
    \end{equation*}
    \item (Monotonicity preservation) $ \tilde \phi_i(\cdot,\varepsilon)$ is nondecreasing for all  $\varepsilon > 0$;
    \item (Intermediate slope property) 
    For each $x_i\in\mathbb{R}$, there exists $\varepsilon_0>0$ such that for all $0 <\varepsilon < \varepsilon_0$, there exist $\delta_1, \delta_2 \geq 0$ satisfying 
    \begin{equation*}
        \frac{\partial \tilde \phi_i}{\partial x_i}(x_i,\varepsilon)  \in \mathrm{co}\left(\left\{\frac{\partial \phi_i}{\partial x_i}(x_i-\delta_1), \frac{\partial \phi_i}{\partial x_i}(x_i +\delta_2)\right\}\right),
    \end{equation*}
    where $\operatorname{co}(\cdot)$ denotes the convex hull. Moreover, $\delta_1,\delta_2 \rightarrow 0^+$ as $\varepsilon \rightarrow 0^+$.
  \end{enumerate}
  The main result of this section establishes that if $\tilde{F}(x,\varepsilon)$ is admissible for \cref{prob:solver_model}, then the following Jacobian smoothing Newton algorithm converges globally and quadratically near the solution.
  \begin{alg}\label{alg: jacobian_smoothing} Let $\tilde{H}(x,\varepsilon) := Ax +\tilde{F}(x,\varepsilon) -f$.
    Given constants $\rho,\ \alpha,\ \eta\in(0,1)$, $\gamma\in(0,+\infty)$, and an initial guess $x_0\in \mathbb{R}^n$, choose $\sigma \in \qty(0,\frac{1}{2}(1-\alpha))$ and $\mu >0$ such that for all $ x\in \mathbb{R}^n$ and $\varepsilon>0$,
    \begin{equation*}
      \norm{\tilde H(x,\varepsilon)-H(x)}\le \mu\varepsilon.
    \end{equation*}
    Set $\beta_0=\norm{H(x_0)}$, $\varepsilon^0=\frac{\alpha}{2\mu}\beta_0$ and $k=0$. Then iterate as follows:

    (1) Solve the linearization problem about $d^k$:
    \begin{equation}\label{eq: smoothing_linearization_problem}
    H(x^k) + \tilde H'_x(x^k,\varepsilon^k)d^k=0,
    \end{equation}
    where $\tilde H'_x(x,\varepsilon)$ denotes the derivative of $\tilde H$ with respect to the first variable
    at $(x,\varepsilon)$.  

    (2) Let $m_k$ be the smallest non-negative integer $m$ that satisfies 
    \begin{equation}\label{neq: linesearch}
    \theta_k(x^k+\rho^md^k) - \theta_k(x^k) \le -2\sigma\rho^m\theta(x^k),
    \end{equation}
    where $\theta_k(x) = \frac{1}{2}\norm{\tilde{H}(x,\varepsilon_k)}^2$.
    Set $t_k=\rho^{m_k}$ and $x^{k+1}=x^k+t_k d^k$.

    (3) If $\norm{H(x^{k+1})}=0$, terminate. Otherwise, if
    \begin{equation}\label{neq: decrease}
    \norm{H(x^{k+1})}\le \max\qty{\eta\beta_k, \alpha^{-1}\norm{H(x^{k+1}) - \tilde H(x^{k+1},\varepsilon^k)}},
    \end{equation}
    set $\beta_{k+1}=\norm{H(x^{k+1})}$ and choose $\varepsilon^{k+1}$  satisfying 
    \begin{equation*}
      0<\varepsilon^{k+1}\le \min\qty{\frac{\alpha}{2\mu}\beta_{k+1},\frac{\varepsilon^k}{2}}, \quad \mathrm{dist}\qty(\tilde H'_x(x^{k+1},\varepsilon^{k+1}),\ \partial_CH(x^{k+1})) \le \gamma\beta_{k+1}.
    \end{equation*}
    If \eqref{neq: decrease} is not satisfied, set $\beta_{k+1}=\beta_k$ and $\varepsilon^{k+1}=\varepsilon^k$.

    (4) Set $k=k+1$ and return to step (1).
  \end{alg}


  \subsection{Derivation of the abstract problem}\label{subsec:solver_derivation_model}
  
    Let $\{\psi_j\}_{j=1}^{n}$ be the nodal basis associated with the nodes $\left\{ x_1,x_2,\dots, x_n \right\}$ for $V_h$ and let $\mathbf{I}_h$ denote the nodal interpolation operator, i.e.,
    \[
      \mathbf{I}_h v = \sum_{i=1}^{n} v(x_i)\psi_i, \quad \forall\, v \in C(\bar\Omega).
    \] 
    The entries of the mass, lumped-mass, and stiffness matrices are then given by
    \begin{equation*}
        M_{ij} =  \int_\Omega \psi_{i} \psi_{j} \mathrm{d}  x,\quad D_{ij} = \delta_{ij} \int_\Omega \mathbf{I}_h \psi_i \mathrm{d}  x, \quad K_{ij} =\int_\Omega \nabla \psi_{j} \cdot \nabla \psi_i  \mathrm{d}  x. 
    \end{equation*}
    Define $A = M + \tau K \in \mathbb{R}^{n\times n}$, $f_i = \int_\Omega (\tau \bar f^{k+1} + u_h^k)\psi_i \,\mathrm{d}x$, let $u \in \mathbb{R}^n$ be the coefficient vector of $u_h^{k+1}$, and set
    \begin{align}
      &\left(F(u)\right)_i = 
        \begin{cases}
          \tau w_{h}^{k+1}\left( x_i\right) D_{ii}  &\text{ for } \eqref{eq:semi_fem_full},\\
          \left(w_{h}^{k+1}-  w_{h}^{k}\right)\left( x_i\right) D_{ii} &\text{ for } \eqref{eq:quasi_fem_full}.\\
        \end{cases}  \label{eq:solver_def_F}
    \end{align}
    Then \eqref{eq:semi_fem_full} and \eqref{eq:quasi_fem_full} can be rewritten in the unified form \eqref{eq:solver_model}.

    Since $w_{h\tau}(x) = \mathcal{F}\left(u_{h\tau}(x,\cdot),w^{0}(x)\right)$ and $u_{h\tau}(x,\cdot)$ is  continuous and piecewise linear in time, we restrict the hysteresis operator $\mathcal{F}$ on the space $C_\mathrm{pm}\left[0,T\right] \times \mathbb{R}$, where
    \begin{equation*}
      C_\mathrm{pm}\left[0,T\right] = \left\{ v\in C\left[0,T\right]\colon \text{ $v$ is piecewise monotone}  \right\}.
    \end{equation*}
    Let $S$ denote the set of all finite sequences of real numbers:
    \[
      S=\left\{\left(v_0, v_1, \ldots, v_N\right) \mid N \in \mathbb{N}_0, v_i \in \mathbb{R}, 0 \leq i \leq N\right\},
    \]
    and define the prolongation operator $\pi\colon S\rightarrow C_\mathrm{pm}\left[0,T\right] $, $\left(v_0, v_1, \ldots, v_N\right) \mapsto v$, where 
    \[
      v\left((k+\alpha) \frac{T}{N}\right):= \alpha v_{k+1} + (1-\alpha) v_k, \quad \alpha\in \left[0,1\right], \quad 0 \leq k \leq N-1.
    \]
    At this stage, piecewise monotonicity in \cref{asp:preli_Op_pw_monotone} can be conveniently formalized in terms of level functions.    
    Given $w^0 \in \mathbb{R}$, the restriction $\left. \mathcal{F}(\cdot, w^0) \right|_{C_\mathrm{pm}\left[0,T\right]}$ is called piecewise monotone if its level functions $l_N: \mathbb{R} \rightarrow \mathbb{R}$ defined by
    \begin{equation}\label{eq:solver_level_func}
      l_N(x)=\left[\mathcal{F}\left( \pi \left(v_0, \ldots, v_{N-1}, x\right), w^0\right)\right]\left(T\right)
    \end{equation}
    are increasing for all $N \in \mathbb{N}_0$ and $(v_0, \dots, v_{N-1}) \in S$. 

    \begin{assumption}[Piecewise $C^{2}$-continuity\footnote{This notion appears to originate from \cite[Def~2.2.14]{brokateHysteresisPhaseTransitions1996}.}]\label{asp:preli_Op_pw_C2}
      The level functions $l_N: \mathbb{R} \rightarrow \mathbb{R}$ defined by \eqref{eq:solver_level_func} for $\left. \mathcal{F}(\cdot, w^0) \right|_{C_\mathrm{pm}\left[0,T\right]}$ are 
      piecewise $C^2$ for any $w^0\in \mathbb{R}$, $N \in \mathbb{N}_0$ and $\left(v_0, \ldots, v_{N-1}\right) \in S$.
    \end{assumption}
    
    Thanks to the above piecewise $C^2$-continuity assumption, the setting of \cref{prob:solver_model} now can be justified in our context.

    \begin{proposition}\label{prop:solver_nonlinearity} 
      Let $F$ be defined by \eqref{eq:solver_def_F}. Then its $i$-th component depends only on the $i$-th argument, i.e., $(F(u))_i = \phi_i(u_i)$.  
      If \cref{asp:preli_Op_RateIndependence,asp:preli_Op_Causality,asp:preli_Op_pw_monotone,asp:preli_Op_pw_lipschitz,asp:preli_Op_pw_C2} hold, then each $\phi_i$ is a nondecreasing piecewise $C^2$-function with derivative bounded almost everywhere.
    \end{proposition}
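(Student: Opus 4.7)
The plan is to reduce the proposition to the level-function formalism \eqref{eq:solver_level_func} and then to propagate the structural assumptions on $\mathcal{F}$ to $\phi_i$ through the affine relation \eqref{eq:solver_def_F}.

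\textbf{Step 1 (component decoupling).} The Lagrangian property $\psi_\ell(x_i)=\delta_{i\ell}$ gives $u_{h\tau}(x_i,t_j)=u_h^j(x_i)$, and in particular $u_h^{k+1}(x_i)=u_i$. Hence the trace $u_{h\tau}(x_i,\cdot)|_{[0,t_{k+1}]}$ is piecewise linear in $t$ and fully determined by the scalar sequence $(u_h^0(x_i),\dots,u_h^k(x_i),u_i)$. The pointwise-in-space definition \eqref{eq:preli_SpaceOp} of $\mathcal{W}$ together with causality (\cref{asp:preli_Op_Causality}) then forces $w_h^{k+1}(x_i)$ to depend only on this sequence and on $w^0(x_i)$. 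Since the prefix $(u_h^0(x_i),\dots,u_h^k(x_i))$ is frozen from earlier iterations, $w_h^{k+1}(x_i)$ is a function of the single scalar $u_i$; the same observation at the previous time step shows that $w_h^k(x_i)$ enters \eqref{eq:solver_def_F} only as a constant. Combined with $D_{ii}>0$, this yields $(F(u))_i=\phi_i(u_i)$.

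\textbf{Step 2 (identification with a level function).} I show that $w_h^{k+1}(x_i)=l_{k+1}(u_i)$, where $l_{k+1}$ is the level function in \eqref{eq:solver_level_func} built from initial state $w^0(x_i)$ and prefix $(u_h^0(x_i),\dots,u_h^k(x_i))$. Extending $u_{h\tau}(x_i,\cdot)|_{[0,t_{k+1}]}$ constantly to $[0,T]$ by the value $u_i$ yields $\tilde q\in C[0,T]$; by causality the output at $t_{k+1}$ is unchanged, and a constant input forces the output to be constant on $[t_{k+1},T]$. Setting $p:=\pi(u_h^0(x_i),\dots,u_h^k(x_i),u_i)$, the piecewise-affine map $s\colon[0,T]\to[0,T]$ that linearly sends $[0,t_{k+1}]$ onto $[0,T]$ and stays at $T$ throughout $[t_{k+1},T]$ is continuous and non-decreasing with $s(0)=0$, $s(T)=T$, and satisfies $p\circ s=\tilde q$. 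Rate independence (\cref{asp:preli_Op_RateIndependence}) evaluated at $t=T$ then delivers the claimed identification. Consequently $\phi_i=\tau D_{ii}\,l_{k+1}$ (semilinear) or $\phi_i=D_{ii}(l_{k+1}-w_h^k(x_i))$ (quasilinear).

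\textbf{Step 3 (inherited regularity).} By the reformulation preceding \cref{asp:preli_Op_pw_C2}, \cref{asp:preli_Op_pw_monotone} gives $l_{k+1}$ nondecreasing and \cref{asp:preli_Op_pw_C2} gives $l_{k+1}$ piecewise $C^2$. For the derivative bound I again invoke rate independence to reparametrize the last affine piece of $\pi$ so that its terminal value sweeps monotonically through any target; piecewise Lipschitz continuity (\cref{asp:preli_Op_pw_lipschitz}) then yields $|l_{k+1}'|\le L$ almost everywhere. Since $D_{ii}>0$, the affine relation from Step 2 transports monotonicity, piecewise-$C^2$ smoothness, and the derivative bound directly to $\phi_i$.

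The main obstacle is Step 2: \cref{asp:preli_Op_RateIndependence} is stated for reparametrizations of $[0,T]$ fixing both endpoints, whereas the natural rescaling between $\{j\tau\}_{j=0}^{k+1}$ and the equispaced grid $\{jT/(k+1)\}_{j=0}^{k+1}$ underlying $\pi$ does not preserve $T$ when $k+1<K$. The constant-tail extension combined with the non-strictly increasing two-piece $s$ resolves this; strict monotonicity of $s$, if required by convention, is recovered by continuity from a strictly increasing approximation. Once this identification is in place, the remainder of the proof is a routine transfer of properties.
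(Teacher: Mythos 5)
Your proposal is correct and follows essentially the same route as the paper's own proof: component decoupling via causality and the pointwise action of $\mathcal{W}$, identification of $w_h^{k+1}(x_i)$ with the level function $l_{k+1}$ through a constant-tail extension plus a rate-independence reparametrization, and then transfer of monotonicity, piecewise $C^2$-smoothness, and the Lipschitz derivative bound from \cref{asp:preli_Op_pw_monotone,asp:preli_Op_pw_lipschitz,asp:preli_Op_pw_C2}. The only cosmetic difference is that the paper realizes the extension by repeating the value $u_h^{k+1}(x_i)$ over the remaining $K-k$ time nodes instead of constantly extending the interpolant, and your worry about strict monotonicity of $s$ can be settled within the stated hypotheses without any approximation argument, since a constant input is both nondecreasing and nonincreasing, so \cref{asp:preli_Op_pw_monotone} already forces the output to be constant on $[t_{k+1},T]$.
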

    \begin{proof}
      Since $u_{h\tau}$ is affine on $\left[t_k,t_{k+1}\right]$ and $\left.u_{h\tau}\right|_{\left[0,t_k\right]}$ is known, setting 
      \[
        v_{x_i} = \mathcal{F}\left(\pi \left(u_h^0(x_i),\dots, u_h^k(x_i), \underbrace{ u_h^{k+1}(x_i), \dots, u_h^{k+1}(x_i)}_{K-k}\right),w^{0}(x)\right) \in C[0,T],
      \]
      \cref{asp:preli_Op_Causality} gives
      \(
        w_{h}^{k+1}\left( x_i\right) =  v_{x_i} (t_{k+1}),
      \)
      and hence $(F(u))_i = \phi_i(u_i)$ by definition. Moreover, \cref{asp:preli_Op_RateIndependence} yields
      \begin{equation*}
          v_{x_i} (t_{k+1}) =  v_{x_i} (T)  
          =  \mathcal{F}\left(\pi \left(u_h^0(x_i),\dots, u_h^k(x_i),  u_h^{k+1}(x_i)\right),w^{0}(x)\right) (T) =: l_{k+1}(u_i).
      \end{equation*}
      Hence, by \cref{asp:preli_Op_pw_monotone,asp:preli_Op_pw_C2}, each $\phi_i$ is increasing and piecewise $C^2$ in $u_i$.  
      Finally, suppose $y_2 > y_1 > u_i$ (or $y_2 < y_1 < u_i$). Let $T'= T- \frac{y_2-y_1}{y_2-u_i} \cdot\frac{T}{k+1} $ and $s := \left(u_h^0(x_i),\dots, u_h^k(x_i),  y_2\right)$.   
      Then, by \cref{asp:preli_Op_pw_lipschitz},
      \begin{equation*}
        \left| l_{k+1}(y_1) -   l_{k+1}(y_2)\right| =   \left|  \mathcal{F}\left(\pi(s),w^{0}(x)\right) \left(T\right) - \mathcal{F}\left(\pi(s),w^{0}(x)\right) \left(T'\right)  \right| \leq L \left| y_1- y_2\right|.
      \end{equation*}
      Therefore, $\phi_i'$ is bounded almost everywhere.  
    \end{proof}


  \subsection{Global and local convergence}
    Since each component function $\phi_i$ of $F$ is $PC^1$, for any $x_i \in \mathbb{R} $, there exists $\delta > 0$ and two $C^1$ functions $\phi_{i,1}, \phi_{i,2} : (x_i - \delta, x_i + \delta) \to \mathbb{R}$ such that 
    \[
    \phi_i = \phi_{i,1} \text{ on } (x_i - \delta, x_i], 
    \qquad 
    \phi_i = \phi_{i,2} \text{ on } [x_i, x_i + \delta).
    \]
    Hence, the B-subdifferential $\partial_B F_i$ at each point can be written explicitly as
    \begin{equation}\label{eq:solver_component_Bdiff}
      \partial_B F_i(x) =
        \{\phi_{i,1}'(x_i),\, \phi_{i,2}'(x_i)\}.
    \end{equation}
    This also leads to explicit Clarke generalized Jacobian and Qi's $C$-subdifferential by definition,
    \begin{gather*}
      \partial F(x) = \operatorname{co}\!\big(\partial_B F_1(x) \times \cdots \times \partial_B F_n(x)\big), \\
      \partial_C F(x) = \operatorname{co}\!\big(\partial_B F_1(x)\big) \times \cdots \times \operatorname{co}\!\big(\partial_B F_n(x)\big).
    \end{gather*}
    It is also well known that every $PC^2$-function is strongly semismooth \cite{scholtes_introduction_2012,ulbrich_semismooth_2011}.

    \begin{remark}
      The equation~\eqref{eq:solver_model} can therefore be solved by the semismooth Newton method, whose locally quadratic convergence was established in~\cite{kojima_extension_1986}.  
      Nevertheless, global convergence remains delicate.  
      The damped semismooth Newton method~\cite{de_luca_semismooth_1996} achieves globalization by employing a smooth merit function in the nonlinear complementarity setting, ensuring convergence of line search steps.  
      In contrast, in our context the merit function~\eqref{eq:solver_merit} is still nonsmooth.
    \end{remark}


    \begin{lemma}\label{lem: level_set}
      For any $x_0\in\mathbb{R}^n$, $\alpha\in(0,1)$, the level set
      \begin{equation*}
        D_0=\qty{x\in\mathbb{R}^n:\ \theta(x)\le(1+\alpha)^2\theta(x^0)}
      \end{equation*}
      is bounded.
    \end{lemma}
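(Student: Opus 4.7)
The plan is to exploit the strong monotonicity of $H$ in \eqref{eq:solver_strong_monotone} to obtain a coercivity bound of the form $\|H(x)\| \gtrsim \|x\|$ for large $\|x\|$, which immediately yields boundedness of any sublevel set of $\theta$.

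First I would fix an arbitrary reference point, say $0 \in \mathbb{R}^n$, and apply \eqref{eq:solver_strong_monotone} with $u_1 = x$ and $u_2 = 0$ to deduce
\begin{equation*}
  \lambda_{\min}(A)\,\|x\|^2 \;\le\; (H(x)-H(0),\, x) \;\le\; \|H(x)-H(0)\|\,\|x\|
\end{equation*}
by Cauchy--Schwarz. Dividing by $\|x\|$ (trivially true when $x=0$) gives
\begin{equation*}
  \|H(x)\| \;\ge\; \|H(x)-H(0)\| - \|H(0)\| \;\ge\; \lambda_{\min}(A)\,\|x\| - \|H(0)\|.
\end{equation*}

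Next I would use the definition of $D_0$: for any $x\in D_0$,
\begin{equation*}
  \tfrac{1}{2}\|H(x)\|^2 = \theta(x) \le (1+\alpha)^2\theta(x^0) = \tfrac{(1+\alpha)^2}{2}\|H(x^0)\|^2,
\end{equation*}
so that $\|H(x)\| \le (1+\alpha)\|H(x^0)\|$. Combining the two bounds yields
\begin{equation*}
  \|x\| \;\le\; \frac{(1+\alpha)\|H(x^0)\| + \|H(0)\|}{\lambda_{\min}(A)},
\end{equation*}
which is a uniform bound over $D_0$ and proves the claim.

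There is no real obstacle here: the argument is a textbook consequence of strong monotonicity, and the only prerequisite beyond \cref{prob:solver_model} itself is the already-established inequality \eqref{eq:solver_strong_monotone}. The only thing to be slightly careful about is handling the degenerate case $x=0$ (trivial) and noting that $\lambda_{\min}(A)>0$ because $A$ is symmetric positive definite.
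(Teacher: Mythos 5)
Your proof is correct and follows essentially the same route as the paper: both exploit the strong monotonicity \eqref{eq:solver_strong_monotone} with the reference point $0$ and the Cauchy--Schwarz inequality to obtain the coercivity bound $\|H(x)\|\ge\lambda_{\min}(A)\|x\|-\|H(0)\|$ (the paper merely writes $\|H(0)\|\le\|F(0)\|+\|f\|$), which makes every sublevel set of $\theta$ bounded. Your explicit bound on $\|x\|$ over $D_0$ is just a slightly more quantitative phrasing of the same argument.
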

    \begin{proof}
      It suffices to show that $\|H(x)\| \to \infty$ as $\|x\| \to \infty$.  
      By the strong monotonicity \eqref{eq:solver_strong_monotone},
      \[
        (x, H(x)) = (x-0, H(x) - H(0)) + (x, H(0))
        \ge \lambda_{\min} \|x\|^2 - \|x\|(\|F(0)\| + \|f\|).
      \]
      Hence $\|H(x)\| \ge \lambda_{\min}\|x\| - \|F(0)\| - \|f\| \to \infty$ as $\|x\|\to\infty$.
    \end{proof}

    The next results concern the properties of the smoothing function $\tilde F$.
    \begin{lemma}\label{lem: smooth_nonsingular}
      If each component of $\tilde{F}(x,\varepsilon)$ is continuously differentiable and satisfies monotonicity preservation, then for any $\varepsilon>0$, $\tilde H'_x(x,\varepsilon)$ is nonsingular.
    \end{lemma}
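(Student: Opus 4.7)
The plan is to exploit the separable structure of $\tilde F$ together with the symmetric positive-definiteness of $A$. Since $\tilde H(x,\varepsilon) = Ax + \tilde F(x,\varepsilon) - f$, the derivative with respect to $x$ decomposes as
\[
  \tilde H'_x(x,\varepsilon) = A + \tilde F'_x(x,\varepsilon),
\]
so it suffices to understand the Jacobian of the smoothing term.

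First I would observe that by the assumption of \cref{prob:solver_model}, each component $\tilde F_i(x,\varepsilon) = \tilde\phi_i(x_i,\varepsilon)$ depends only on the single variable $x_i$. Consequently $\tilde F'_x(x,\varepsilon)$ is the diagonal matrix
\[
  \tilde F'_x(x,\varepsilon) = \diag\!\left(\tfrac{\partial \tilde\phi_i}{\partial x_i}(x_i,\varepsilon)\right)_{i=1}^n,
\]
which is well defined because each $\tilde\phi_i(\cdot,\varepsilon) \in C^1(\mathbb{R})$ by the smoothness condition.

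Next I would use monotonicity preservation: for every $\varepsilon>0$ and each $i$, the map $\tilde\phi_i(\cdot,\varepsilon)$ is nondecreasing and continuously differentiable, so $\partial_{x_i}\tilde\phi_i(x_i,\varepsilon) \ge 0$ for all $x_i$. Hence $\tilde F'_x(x,\varepsilon)$ is symmetric and positive semidefinite.

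Finally, combining this with the hypothesis that $A$ is symmetric positive definite, the sum $A + \tilde F'_x(x,\varepsilon)$ is symmetric positive definite, in particular nonsingular. This completes the argument. I do not anticipate a genuine obstacle here; the only subtlety is ensuring the derivative is nonnegative, which follows immediately since a $C^1$ nondecreasing scalar function has nonnegative derivative everywhere.
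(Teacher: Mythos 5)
Your argument is essentially identical to the paper's: both write $\tilde H'_x = A + \tilde F'_x$, identify $\tilde F'_x(x,\varepsilon)$ as a diagonal matrix with entries $\partial_{x_i}\tilde\phi_i(x_i,\varepsilon)\ge 0$ by monotonicity preservation and $C^1$-smoothness, and then conclude that $A + \tilde F'_x$ is symmetric positive definite (hence nonsingular) because $A$ is symmetric positive definite and $\tilde F'_x$ is positive semidefinite. No gaps.
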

    \begin{proof}
      A direct computation gives diagonal $\tilde F'_x(x,\varepsilon) = \sum_{i=1}^n \frac{\partial \tilde \phi_i}{\partial x_i}(x_i,\varepsilon) e_i e_i^T$.   
      Since monotonicity is preserved, $\frac{\partial \tilde \phi_i}{\partial x_i}(x_i,\varepsilon) \ge 0$ for all $i$, implying that $\tilde F'_x(x,\varepsilon)$ is positive semidefinite.   
      Because $A$ is symmetric and positive-definite, the matrix
      \[
        \tilde H'_x(x,\varepsilon) = A + \tilde F'_x(x,\varepsilon)
      \]
      is symmetric and positive-definite and thus nonsingular.
    \end{proof}

    \begin{lemma}\label{lem: Jacobian consistency}
      If each component of $\tilde{F}(x,\varepsilon)$ is continuously differentiable and satisfies intermediate slope property, 
      then for any $x\in\mathbb{R}^n$, 
      \begin{equation*}
        \lim_{\varepsilon\rightarrow 0^+} \mathrm{dist}\qty(\tilde H'_x(x,\varepsilon),\partial_C H(x)) = 0.
      \end{equation*}
    \end{lemma}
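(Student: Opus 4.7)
The plan is to exploit the separable diagonal structure of both the smoothing Jacobian and the $C$-subdifferential. First, because $A$ is constant, one has $\tilde H'_x(x,\varepsilon) = A + \tilde F'_x(x,\varepsilon)$ and $\partial_C H(x) = A + \partial_C F(x)$, where $\partial_C F(x)$ is understood as the set of diagonal matrices whose $i$-th entry lies in $\operatorname{co}(\partial_B F_i(x))$. Cancelling the common term $A$, the claim reduces to showing
\[
\lim_{\varepsilon\to 0^+}\mathrm{dist}\qty(\tilde F'_x(x,\varepsilon),\partial_C F(x)) = 0.
\]

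Next, observe that $\tilde F'_x(x,\varepsilon) = \diag\!\qty(\frac{\partial \tilde\phi_i}{\partial x_i}(x_i,\varepsilon))$, while by \eqref{eq:solver_component_Bdiff} the set $\partial_C F(x)$ consists of all diagonal matrices whose $i$-th entry lies in $\operatorname{co}\{\phi_{i,1}'(x_i),\phi_{i,2}'(x_i)\}$. Choosing the Frobenius norm, the distance decouples componentwise as
\[
\mathrm{dist}\qty(\tilde F'_x(x,\varepsilon),\partial_C F(x))^{2} = \sum_{i=1}^{n}\mathrm{dist}\!\qty(\tfrac{\partial \tilde\phi_i}{\partial x_i}(x_i,\varepsilon),\operatorname{co}\{\phi_{i,1}'(x_i),\phi_{i,2}'(x_i)\})^{2},
\]
so it suffices to control each scalar term separately and sum.

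For each fixed $i$, I would invoke the intermediate slope property: for all sufficiently small $\varepsilon > 0$ there exist $\delta_1(\varepsilon),\delta_2(\varepsilon)\ge 0$ with $\delta_j \to 0^+$ such that $\frac{\partial \tilde\phi_i}{\partial x_i}(x_i,\varepsilon) \in \operatorname{co}\{\phi_i'(x_i-\delta_1),\phi_i'(x_i+\delta_2)\}$. Since $\phi_i$ coincides with $\phi_{i,1}\in C^1$ on $(x_i-\delta,x_i]$ and with $\phi_{i,2}\in C^1$ on $[x_i,x_i+\delta)$, continuity of $\phi_{i,1}'$ and $\phi_{i,2}'$ gives $\phi_i'(x_i-\delta_1)\to\phi_{i,1}'(x_i)$ and $\phi_i'(x_i+\delta_2)\to\phi_{i,2}'(x_i)$ as $\varepsilon\to 0^+$. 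Writing $\frac{\partial \tilde\phi_i}{\partial x_i}(x_i,\varepsilon) = \alpha_1\phi_i'(x_i-\delta_1)+\alpha_2\phi_i'(x_i+\delta_2)$ with $\alpha_1,\alpha_2\ge 0$, $\alpha_1+\alpha_2=1$, and comparing with the analogous convex combination $\alpha_1\phi_{i,1}'(x_i)+\alpha_2\phi_{i,2}'(x_i)\in\operatorname{co}\{\phi_{i,1}'(x_i),\phi_{i,2}'(x_i)\}$, the triangle inequality bounds the componentwise distance by $|\phi_i'(x_i-\delta_1)-\phi_{i,1}'(x_i)|+|\phi_i'(x_i+\delta_2)-\phi_{i,2}'(x_i)|$, which tends to zero.

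The only delicacy is making sure that when the intermediate slope property is applied, the perturbations $\delta_1,\delta_2$ stay within the one-sided $C^1$ neighborhoods supporting $\phi_{i,1}$ and $\phi_{i,2}$, so that $\phi_i'(x_i\pm\delta_j)$ is actually well-defined as $\phi_{i,1}'(x_i-\delta_1)$ and $\phi_{i,2}'(x_i+\delta_2)$. This is automatic once $\varepsilon$ is chosen below both the threshold $\varepsilon_0$ supplied by the intermediate slope property and a value that forces $\delta_j<\delta$; the finiteness of the number of components then lets the finitely many $\varepsilon$-thresholds be combined into a single one, after which summation of the componentwise limits yields the result. I do not expect a substantive obstacle beyond this bookkeeping, since the heavy lifting has been done by the intermediate slope axiom itself.
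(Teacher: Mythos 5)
Your proposal is correct and follows essentially the same route as the paper's proof: reduce to the componentwise scalar distance via the diagonal structure, write $\frac{\partial \tilde\phi_i}{\partial x_i}(x_i,\varepsilon)$ as a convex combination of $\phi_i'(x_i-\delta_1)$ and $\phi_i'(x_i+\delta_2)$ using the intermediate slope property, compare with the same-coefficient element of $\operatorname{co}\{\phi_{i,1}'(x_i),\phi_{i,2}'(x_i)\}$, and conclude by the one-sided $C^1$ continuity of the pieces. Your extra bookkeeping (cancelling $A$, the Frobenius-norm decoupling, and keeping $\delta_1,\delta_2$ inside the one-sided smooth neighborhoods) only makes explicit what the paper leaves implicit.
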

    \begin{proof}
      It suffices to show that for each $i$ and any $x_i \in\mathbb{R}$,
      \begin{equation*}
        \lim_{\varepsilon\rightarrow 0^+} \mathrm{dist}\qty(\frac{\partial \tilde \phi_i}{\partial x_i}(x_i,\varepsilon)  ,\partial \phi_i(x_i)) = 0.
      \end{equation*}
      By the intermediate slope property, when $\varepsilon$ is sufficient small, there exists $\lambda(\delta_1,\delta_2) \in [0,1]$ such that 
      \begin{equation*}
        \frac{\partial \tilde \phi_i}{\partial x_i}(x_i,\varepsilon) = \lambda\phi_i'(x_i-\delta_1) + (1-\lambda)\phi_i'(x_i+\delta_2).
      \end{equation*} 
      From~\eqref{eq:solver_component_Bdiff}, we also have
      $
        \partial \phi_i(x_i)
        = \{\alpha \phi'_{i,1}(x_i) + (1-\alpha)\phi'_{i,2}(x_i) : \alpha \in [0,1]\}.
      $
      As $\varepsilon \to 0^+$, $\delta_1,\delta_2 \to 0^+$, and
      {\small
      \begin{equation*}
        \begin{aligned}
        \text{dist}\qty(\frac{\partial \tilde \phi_i}{\partial x_i}(x_i,\varepsilon),\partial \phi_i(x_i)) \leq  & \left|\lambda \left(\phi_i'(x_i - \delta_1) - \phi_{i,1}'(x_i) \right) + (1-\lambda)\left(\phi_i'(x_i+\delta_2) -\phi_{i,2}'(x_i)\right)\right| \\ 
        \leq & \left| \phi_i'(x_i - \delta_1) - \phi_{i,1}'(x_i) \right| + \left| \phi_i'(x_i+\delta_2) - \phi_{i,2}'(x_i) \right| \rightarrow 0,
        \end{aligned}
      \end{equation*}
      }
      since $\phi_i$ is piecewise continuously differentiable.
    \end{proof}

    \begin{theorem}[Convergence]\label{thm:solver_convergence}
      If $\tilde F(x,\varepsilon)$ is admissible for \cref{prob:solver_model}, then Algorithm~\ref{alg: jacobian_smoothing} is well defined for solving~\eqref{eq:solver_model}.  
      Moreover, the generated sequence $\{x^k\}$ converges globally and quadratically to the unique solution $x^*$ of $H(x) = 0$.
    \end{theorem}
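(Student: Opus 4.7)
The plan is to establish, in order, (i) that each iteration of Algorithm~\ref{alg: jacobian_smoothing} is well defined, (ii) global convergence of $\{x^k\}$ to the unique root of $H$, and (iii) local Q-quadratic convergence. All three rely only on the four admissibility conditions on $\tilde F$, together with the auxiliary Lemmas~\ref{lem: level_set}, \ref{lem: smooth_nonsingular}, and~\ref{lem: Jacobian consistency}.

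For well-definedness, the Newton direction $d^k$ from \eqref{eq: smoothing_linearization_problem} exists uniquely since $\tilde H'_x(x^k,\varepsilon^k)$ is nonsingular by Lemma~\ref{lem: smooth_nonsingular}. For the line search, a direct computation using $\tilde H'_x(x^k,\varepsilon^k)d^k=-H(x^k)$ gives
\[
\nabla\theta_k(x^k)^Td^k \;=\; \tilde H(x^k,\varepsilon^k)^T\tilde H'_x(x^k,\varepsilon^k)d^k \;=\; -\tilde H(x^k,\varepsilon^k)^TH(x^k).
\]
Writing $\tilde H=H+(\tilde H-H)$ and using the admissibility bound $\|\tilde H(x^k,\varepsilon^k)-H(x^k)\|\le\mu\varepsilon^k\le\tfrac{\alpha}{2}\beta_k\le\tfrac{\alpha}{2}\|H(x^k)\|$, I would obtain $\nabla\theta_k(x^k)^Td^k\le -(1-\tfrac{\alpha}{2})\|H(x^k)\|^2\le -2\sigma\,\theta(x^k)/(1-\tfrac{\alpha}{2})\cdot(\ldots)$, and since $\sigma<\tfrac{1}{2}(1-\alpha)$, the standard Armijo argument gives a finite $m_k$ satisfying \eqref{neq: linesearch}. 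Finally, the update of $\varepsilon^{k+1}$ is feasible because Lemma~\ref{lem: Jacobian consistency} allows us to pick $\varepsilon^{k+1}$ small enough to satisfy both the halving bound and the distance-to-$\partial_CH$ condition.

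For global convergence I would split into two cases depending on how often the test \eqref{neq: decrease} succeeds. \emph{Case (a).} If \eqref{neq: decrease} holds for infinitely many indices, then on that subsequence $\beta_{k+1}\le\eta\beta_k$ or $\beta_{k+1}\le\alpha^{-1}\mu\varepsilon^k$; combined with $\varepsilon^{k+1}\le\varepsilon^k/2$ this forces $\beta_k\to0$ and $\varepsilon^k\to0$, and since $\|H(x^k)\|\le\beta_k$ eventually, $H(x^k)\to0$. Strong monotonicity \eqref{eq:solver_strong_monotone} then gives $x^k\to x^\ast$. \emph{Case (b).} Suppose \eqref{neq: decrease} fails for all $k\ge k_0$, so $\beta_k\equiv\bar\beta$ and $\varepsilon^k\equiv\bar\varepsilon$ are frozen. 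By Lemma~\ref{lem: level_set} the sequence $\{x^k\}$ stays in the bounded level set $D_0$, and along this tail \eqref{neq: linesearch} combined with $\sigma<\tfrac{1}{2}(1-\alpha)$ and the descent estimate above forces $\theta(x^k)$ to contract by a constant factor per step, eventually driving $\theta(x^k)<\tfrac{1}{2}\eta^2\bar\beta^2$. This contradicts the assumed failure of \eqref{neq: decrease}, which required $\|H(x^{k+1})\|>\eta\bar\beta$, so Case (b) cannot occur.

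For local Q-quadratic convergence, I would argue that for $k$ large the conditions $\varepsilon^{k+1}\le\tfrac{\alpha}{2\mu}\beta_{k+1}$ and $\mathrm{dist}(\tilde H'_x(x^{k+1},\varepsilon^{k+1}),\partial_CH(x^{k+1}))\le\gamma\beta_{k+1}$ allow the update $x^{k+1}=x^k+d^k$ to be interpreted as a Qi-type semismooth Newton step on $H$. Since each $\phi_i$ is $PC^2$, $F$ and thus $H$ are strongly semismooth, and every element of $\partial_CH(x)$ is symmetric positive-definite (because $A$ is SPD and the generalized slopes of $F$ are nonnegative). Combining the Jacobian-consistency error $O(\beta_k)=O(\|H(x^k)\|)$ with the strong-semismoothness residual $O(\|x^k-x^\ast\|^2)$ and the uniform bound $\|\tilde H-H\|\le\mu\varepsilon^k=O(\|H(x^k)\|)$, the classical Qi-Sun type estimate yields $\|x^{k+1}-x^\ast\|=O(\|x^k-x^\ast\|^2)$. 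It remains to verify that full steps $t_k=1$ are accepted near $x^\ast$, which follows from the usual Dennis-Moré-style expansion of $\theta_k$.

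The principal obstacle is Case (b) of the global-convergence argument. The difficulty is that the Armijo test is measured with the smoothed merit $\theta_k$, while the stopping/test quantity in \eqref{neq: decrease} is the non-smooth residual $\|H(\cdot)\|$; to extract a uniform per-step decrease for $\theta$ from a decrease of $\theta_k$ and to show this ultimately triggers \eqref{neq: decrease}, one must carefully bookkeep the gap $\|\tilde H(\cdot,\bar\varepsilon)-H(\cdot)\|\le\mu\bar\varepsilon$ together with the inequalities $\bar\varepsilon\le\tfrac{\alpha}{2\mu}\bar\beta$ and $\sigma<\tfrac{1}{2}(1-\alpha)$, which is where the precise constants in the algorithm enter in an essential way.
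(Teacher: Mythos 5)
Your proposal does not follow the paper's route: the paper's proof is short precisely because it only \emph{verifies the hypotheses} of the Jacobian smoothing Newton convergence theory and then invokes \cite[Thms~3.1--3.2]{chen_global_1998} — boundedness of the level set (\cref{lem: level_set}), nonsingularity of the smoothed Jacobians (\cref{lem: smooth_nonsingular}), Jacobian consistency (\cref{lem: Jacobian consistency}), strong semismoothness of the piecewise-$C^2$ map $H$, and nonsingularity of $\partial_C H(x^*)$ — adding only the strong-monotonicity step \eqref{eq:solver_strong_monotone} to upgrade $H(x^k)\to 0$ to $x^k\to x^*$ (a step you do have). You instead attempt to reprove the cited theorems from scratch, which would be legitimate if complete, but the bookkeeping you build it on is incorrect. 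You use $\beta_k\le\norm{H(x^k)}$ in the descent estimate and the opposite inequality $\norm{H(x^k)}\le\beta_k$ in Case (a); neither holds in general, since after a failed test \eqref{neq: decrease} one has $\beta_{k+1}=\beta_k$ while $\norm{H(x^{k+1})}$ is only known to exceed $\eta\beta_k$, so it may lie on either side of $\beta_k$. The invariant the algorithm actually maintains is $\norm{\tilde H(x^k,\varepsilon^k)-H(x^k)}\le\alpha\norm{H(x^k)}$: after a successful test it follows from $\mu\varepsilon^{k}\le\tfrac{\alpha}{2}\beta_{k}=\tfrac{\alpha}{2}\norm{H(x^{k})}$, and during failure phases it follows from the \emph{second} branch of the max in \eqref{neq: decrease}, which your argument never invokes. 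This invariant is what makes $d^k$ a descent direction with margin $-2(1-\alpha)\theta(x^k)$, matching the requirement $\sigma<\tfrac12(1-\alpha)$ in \eqref{neq: linesearch}; and, combined with the monotone decrease of the frozen smoothed merit, it yields $\norm{H(x^k)}\le(1+\alpha)\beta_k$, which is what keeps the iterates inside the set $D_0$ of \cref{lem: level_set} — a containment you use but never establish.

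The decisive gap is the one you yourself flag: Case (b) of the global argument, which is exactly where the content of the theorem lies, is asserted rather than proved. ``Contracts by a constant factor per step'' requires a uniform positive lower bound on the accepted step sizes $t_k$, which in turn needs (i) the iterates to remain in the compact set $D_0$, (ii) a uniform bound on $\norm{\tilde H'_x(x,\bar\varepsilon)^{-1}}$ over that set (so that $\norm{d^k}$ is controlled and the directional derivative estimate is uniform), and (iii) a standard Armijo argument for the fixed $C^1$ merit $\tfrac12\norm{\tilde H(\cdot,\bar\varepsilon)}^2$ together with the gap bound $\mu\bar\varepsilon\le\tfrac{\alpha}{2}\bar\beta$ to show the residual test is eventually triggered; none of this is carried out. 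Likewise, the local phase — acceptance of unit steps under the line search and the Q-quadratic rate from Jacobian consistency plus strong semismoothness — is only sketched. These unfinished pieces are precisely the statements the paper imports from \cite{chen_global_1998}. To repair the proposal, either cite that convergence theorem after checking its hypotheses, as the paper does, or complete the descent/step-size bookkeeping described above; as written, the proof is not complete.
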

    \begin{proof}
      By \cref{lem: Jacobian consistency}, the admissible smoothing approximation $\tilde F(x,\varepsilon)$ satisfies the required Jacobian consistency. 
      Thus, applying
      \cref{lem: smooth_nonsingular,lem: level_set} yields $\lim_{k\rightarrow\infty}H(x^k) = 0$ by \cite[Thm 3.1]{chen_global_1998}. If the sequence $\{x^k\}$ does not reach $x^*$ in finitely many steps, the strong monotonicity \eqref{eq:solver_strong_monotone} implies
      \begin{equation*}
      \lambda_{\min}(A)\norm{x^k-x^*} \le \norm{H(x^k) - H(x^*)},
      \end{equation*}
      and therefore $x^k \to x^*$.  
      Moreover, since $\partial_C H (x^*) = A + \partial_C F (x^*)$ is nonsingular and  $H$ is strongly semismooth, the local Q-quadratic convergence follows directly from~\cite[Thm.~3.2]{chen_global_1998}.
    \end{proof}

  \subsection{An arc-based smoothing strategy}

    To render \cref{alg: jacobian_smoothing} computationally feasible, we now introduce an arc-based smoothing strategy that yields an admissible and implementable approximation of $F$.
    This construction performs localized smoothing near derivative discontinuities via geometric arc interpolation, leveraging the tangent-extension results established in \cref{sec:app_tangent_extension}.

    Assume that $\phi_i(x_i)$ has discontinuous derivatives at finitely many points $\left\{a_j\right\}_{j=1}^m$ with $a_1 < a_2 < \dots < a_m$. Let $a_0=-\infty$, $a_{m+1}=+\infty$ and define $I_j = \left(a_{j-1},a_j\right)$. On each subinterval $I_j$, denote by $\phi_{i,j}:=\left.\phi_i\right|_{I_j} \in C^{2}\qty(\bar{I}_j)$ for $1\leq j \leq m+1$. Since every $\phi_{i,j}$ can be extended to a continuously differentiable function on $\mathbb{R}$ by affine extension, the original function can be decomposed as
    \begin{equation}
      \phi_i(x_i)=\sum_{j=1}^m g_{i,j}(x_i)-\sum_{j=2}^{m} \phi_{i,j}(x_i),
    \end{equation}
    where each auxiliary function $ g_{i,j}\colon \mathbb{R}\rightarrow \mathbb{R}$ possesses only a single point of derivative discontinuity, defined by
    \begin{equation}
      g_{i,j}(x_i):= \begin{cases}\phi_{i,j}(x_i), & x \leqslant a_j, \\ \phi_{i,j+1}(x_i), & x_i>a_j.\end{cases}
    \end{equation}
    Consequently, it suffices to construct a smooth approximation family $\tilde g_{i,j}(\cdot,\varepsilon)$ for each $g_{i,j}$, from which a smoothed version of $\phi_i$ and $F$ is obtained as
    \begin{gather}
      \tilde \phi_i(x_i,\varepsilon) :=\sum_{j=1}^m \tilde g_{i,j}(x_i,\varepsilon) - \sum_{j=2}^{m} \phi_{i,j}(x_i), \label{eq:solver_smoothing_arc_phi} \notag \\ 
      \tilde{{F}}( {x},\varepsilon):=
      \qty(\tilde \phi_1(x_1,\varepsilon),\dots,\tilde \phi_n(x_n,\varepsilon))^T. \label{eq:solver_smoothing_arc_F}
    \end{gather}

    \begin{figure} 
      \begin{subfigure}[b]{0.45\textwidth}
          \centering
          \includegraphics[width=\textwidth]{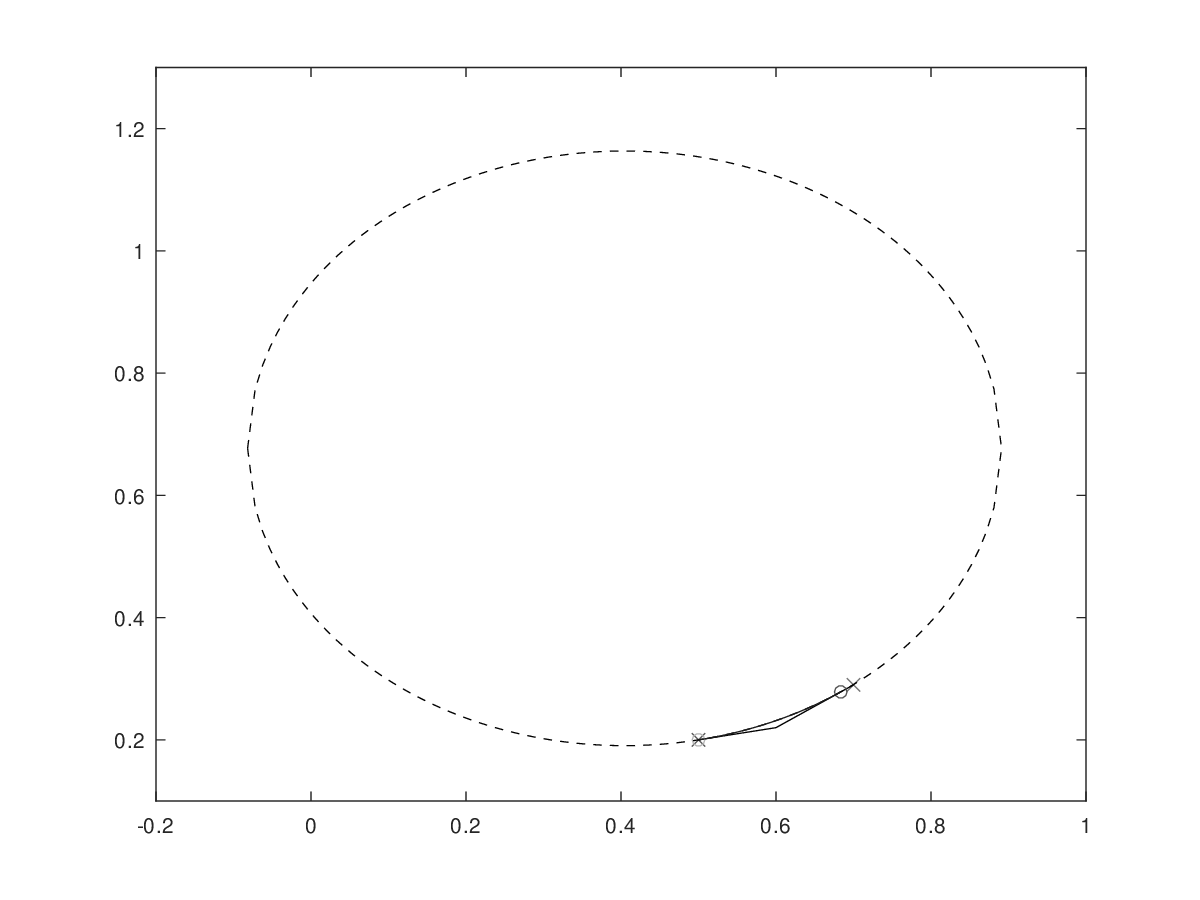}
          \caption{}
      \end{subfigure}
      \qquad
      \begin{subfigure}[b]{0.45\textwidth} 
          \centering
          \includegraphics[width=\textwidth]{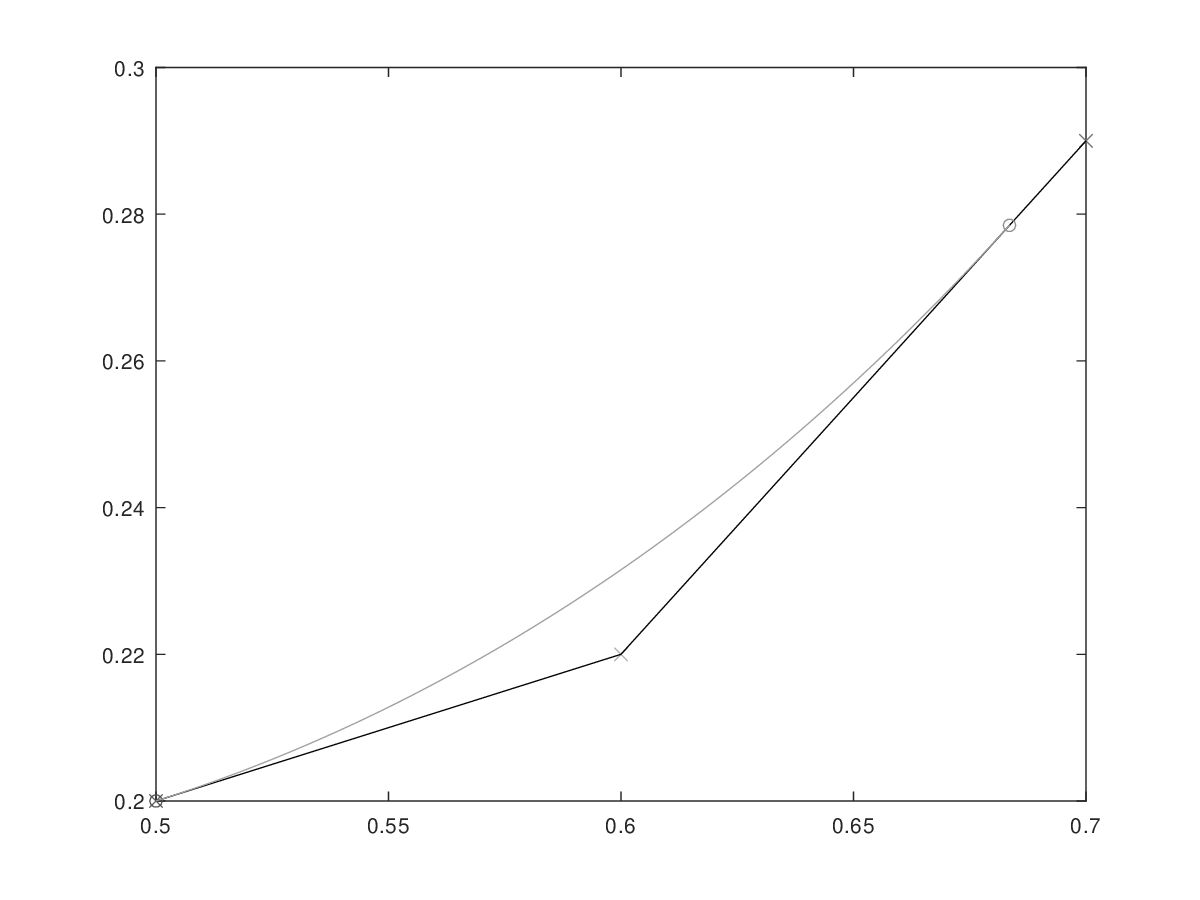}
          \caption{}
      \end{subfigure}
      \caption{\enspace Local smoothing approximation of the angle by an arc.}
      \label{fig: smoothing_arc}
    \end{figure}

    According to \cref{alg:app_solver_btwindow} and \cref{lem:locally_tangent_extension}, there exists a strictly decreasing and positive sequence $ \{\varepsilon_{i,j}^k\}_{k=1}^\infty$ with $\varepsilon_{i,j}^k \to 0$, such that $(a_j- \varepsilon_{i,j}^k, a_j+ \varepsilon_{i,j}^k) \subset \qty(a_{j-1}, a_{j+1})$, and  the function $g_{i,j}$ is tangent-extendable in $(a_j - \varepsilon_{i,j}^k, a_j + \varepsilon_{i,j}^k)$. That is, there exists a piecewise affine function $\psi_{i,j}$ with at most one derivative discontinuity at some $x_0 \in (a_j- \varepsilon_{i,j}^k, a_j+ \varepsilon_{i,j}^k)$\footnote{If no derivative discontinuity is present, $x_0 \in (a_j- \varepsilon_{i,j}^k, a_j+ \varepsilon_{i,j}^k)$ may be chosen arbitrarily.} satisfying
    \begin{equation*}
      \psi_{i,j}^{(r)}(a_j-\varepsilon_{i,j}^k) = g_{i,j}^{(r)}(a_j-\varepsilon_{i,j}^k), \quad \psi_{i,j}^{(r)}(a_j+\varepsilon_{i,j}^k) = g_{i,j}^{(r)}(a_j+\varepsilon_{i,j}^k),\quad r=0,1,
    \end{equation*}
    where the superscript denotes the $r$-th derivative. For each $k$, the tangent extension of $g_{i,j}$ is thus given by 
    \begin{equation}\label{eq:solver_tangent_extension_g}
      \bar{g}_{i,j}(x):=\begin{cases}
        g_{i,j}(x), & x \in(-\infty, a_j-\varepsilon_{i,j}^k], \\ 
        g_{i,j}(a_j-\varepsilon_{i,j}^k) + g_{i,j}^{\prime}(a_j-\varepsilon_{i,j}^k)(x-a_j+\varepsilon_{i,j}^k), & x \in\left(a_j-\varepsilon_{i,j}^k, x_0\right], \\ 
        g_{i,j}(a_j+\varepsilon_{i,j}^k) + g_{i,j}^{\prime}(a_j+\varepsilon_{i,j}^k)(x-a_j-\varepsilon_{i,j}^k), & x \in\left(x_0, a_j+\varepsilon_{i,j}^k\right], \\ 
        g_{i,j}(x), & x \in(a_j+\varepsilon_{i,j}^k,+\infty).
      \end{cases}
    \end{equation}
    Let $P_0(x_0,\bar{g}_{i,j}(x_0))$, $P_1(x_1,\bar{g}_{i,j}(x_1))$ and $P_2(x_2,\bar{g}_{i,j}(x_2))$ be points satisfying $x_1 < x_0 < x_2$ and $\qty|\overline{P_1P_0}|=\qty|\overline{P_0P_2}|$. On the interval $\qty[x_1, x_2]$, there exists an arc with center $P_c(x_c,y_c)$ given by
    \begin{equation*}
      \left\{\begin{array}{l}
      x_c=\frac{x_2 g_{i,j}^{\prime}(x_1)-x_1 g_{i,j}^{\prime}(x_2)-\left(g_{i,j}(x_2)-g_{i,j}(x_1)\right) g_{i,j}^{\prime}(x_1) g_{i,j}^{\prime}(x_2)}{g_{i,j}^{\prime}(x_1)-g_{i,j}^{\prime}(x_2)}, \\
      y_c=\frac{(x_1-x_2) + g_{i,j}(x_1) g_{i,j}^{\prime}(x_1)-g_{i,j}(x_2) g_{i,j}^{\prime}(x_2)}{g_{i,j}^{\prime}(x_1)-g_{i,j}^{\prime}(x_2)},
      \end{array}\right.
    \end{equation*}
    which is tangent to $\bar{g}_{i,j}$ at both $P_1$ and $P_2$. The corresponding smoothing function is then defined as
    \begin{equation}\label{eq:solver_smoothing_arc_g}
      \begin{aligned}
      &\tilde{g}_{i,j}(x,\varepsilon_{i,j}^k)= 
        \begin{cases}
          g_{i,j}(x), & x \in(-\infty, a_j-\varepsilon_{i,j}^k], \\ 
          g_{i,j}(a_j-\varepsilon_{i,j}^k) + g_{i,j}^{\prime}(a_j-\varepsilon_{i,j}^k)(x-a_j+\varepsilon_{i,j}^k), & x \in\left(a_j-\varepsilon_{i,j}^k, x_1\right], \\ 
          y_c+\frac{\phi_{i,j}^{\prime}(a_j)-\phi_{i,j+1}^{\prime}(a_j)}{\left|\phi_{i,j}^{\prime}(a_j)-\phi_{i,j+1}^{\prime}(a_j)\right|} \sqrt{r^2-\left(x-x_c\right)^2}, & x \in\left(x_1, x_2\right], \\ 
          g_{i,j}(a_j+\varepsilon_{i,j}^k) + g_{i,j}^{\prime}(a_j+\varepsilon_{i,j}^k)(x-a_j-\varepsilon_{i,j}^k), & x \in\left(x_2, a_j+\varepsilon_{i,j}^k\right], \\ 
          g_{i,j}(x), & x \in(a_j+\varepsilon_{i,j}^k,+\infty).
        \end{cases}
      \end{aligned}
    \end{equation}
    Let $\varepsilon_{i,j}^0 = +\infty$ and for $\varepsilon \in ({\varepsilon}_{i,j}^k, {\varepsilon}_{i,j}^{k-1})$, we set $\tilde g_{i,j}(x,\varepsilon)= \tilde g_{i,j}(x,{\varepsilon}_{i,j}^k)$.

    The advantage of the arc-based smoothing lies in its ability to ensure that both the function values and their derivatives vary monotonically within each smoothing interval. Consequently, the monotonicity preservation and the intermediate slope property are simultaneously satisfied.
    It is straightforward to verify that \eqref{eq:solver_smoothing_arc_F} remains admissible for \cref{prob:solver_model}, provided that the approximation property below holds.
    \begin{proposition}[Approximation]\label{prop:solver_arc_approximation}
      If the smoothing function \eqref{eq:solver_smoothing_arc_g} is employed, there exists a constant $\mu_i>0$ such that, for all $x\in\mathbb{R}$ and $\varepsilon > 0$,
      \begin{equation*}
        \left|{\tilde \phi_i(x,\varepsilon)- \phi_i(x)}\right|\le \mu_i\varepsilon.
      \end{equation*}
    \end{proposition}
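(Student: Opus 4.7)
The plan is to reduce the proposition to a per-singularity estimate and then exploit the geometric structure of the arc construction. By the decomposition \eqref{eq:solver_smoothing_arc_F} together with the defining formula for $\tilde{\phi}_i$, the unmodified pieces $\phi_{i,j}$ cancel telescopically, so that
\begin{equation*}
\tilde{\phi}_i(x,\varepsilon) - \phi_i(x) = \sum_{j=1}^m \bigl[\tilde{g}_{i,j}(x,\varepsilon) - g_{i,j}(x)\bigr].
\end{equation*}
It therefore suffices to prove $\lvert \tilde{g}_{i,j}(x,\varepsilon) - g_{i,j}(x)\rvert \leq c_{i,j}\,\varepsilon$ for each fixed $j$, after which the claim follows by summation with $\mu_i = \sum_j c_{i,j}$.

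I would then fix $j$ and, given $\varepsilon>0$, select the unique $k$ with $\varepsilon \in (\varepsilon_{i,j}^k,\varepsilon_{i,j}^{k-1}]$, so that $\tilde{g}_{i,j}(\cdot,\varepsilon) = \tilde{g}_{i,j}(\cdot,\varepsilon_{i,j}^k)$ and $\varepsilon_{i,j}^k < \varepsilon$. Outside the smoothing window $W_k = (a_j - \varepsilon_{i,j}^k, a_j + \varepsilon_{i,j}^k)$ the two functions coincide by \eqref{eq:solver_smoothing_arc_g}, so the estimate is trivial there. Inside $W_k$, since $W_k\subset(a_{j-1},a_{j+1})$ and $\phi_{i,j},\phi_{i,j+1}\in C^2(\bar{I}_j)$, we may fix a uniform Lipschitz constant $L_{i,j}$ for $g_{i,j}$ on the fixed compact set $\overline{W_1}$. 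A direct Lipschitz bound gives $\lvert g_{i,j}(x)-g_{i,j}(a_j)\rvert \leq L_{i,j}\,\varepsilon_{i,j}^k$ for $x\in W_k$, so the real task becomes bounding $\lvert \tilde{g}_{i,j}(x,\varepsilon_{i,j}^k) - g_{i,j}(a_j)\rvert$ by $O(\varepsilon_{i,j}^k)$ uniformly in $k$.

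On the two tangent-line subintervals of $W_k$ appearing in \eqref{eq:solver_smoothing_arc_g}, the bound is immediate: the tangent lines have slopes $g'_{i,j}(a_j\pm\varepsilon_{i,j}^k)$, uniformly bounded by $L_{i,j}$, and pass through points within $L_{i,j}\varepsilon_{i,j}^k$ of $g_{i,j}(a_j)$. For the arc subinterval, I would use a geometric argument: the arc is tangent to both tangent lines at $P_1,P_2$, hence lies in the closed triangle $P_1 V P_2$, where $V$ is the intersection of the two tangent lines. A direct computation of $V$ from its defining linear system shows
\begin{equation*}
x_V - a_j = \frac{(y_2 - y_1) - (m_1+m_2)\varepsilon_{i,j}^k}{m_1 - m_2}, \qquad y_V - g_{i,j}(a_j) = O\!\left(\frac{\varepsilon_{i,j}^k}{\lvert m_1-m_2\rvert}\right),
\end{equation*}
with $m_1 = g'_{i,j}(a_j - \varepsilon_{i,j}^k)$, $m_2 = g'_{i,j}(a_j + \varepsilon_{i,j}^k)$. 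Since $a_j$ is a point of derivative discontinuity of $\phi_i$, one has $\phi'_{i,j}(a_j)\neq \phi'_{i,j+1}(a_j)$, whence $\lvert m_1-m_2\rvert \to \lvert \phi'_{i,j}(a_j) - \phi'_{i,j+1}(a_j)\rvert > 0$ as $k\to\infty$. Therefore, by choosing the sequence $\{\varepsilon_{i,j}^k\}$ small enough from some index $k_0$ onward (which we may always do by passing to a subsequence), $\lvert m_1-m_2\rvert$ is uniformly bounded away from $0$, and the triangle $P_1 V P_2$ has diameter $\leq C_{i,j}\,\varepsilon_{i,j}^k$.

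The main obstacle, and the reason for the paragraph above, is precisely the uniform control of the tangent-slope gap $\lvert m_1-m_2\rvert$ across all $k$. For the finitely many indices $k<k_0$ one handles the bound by compactness, since on $\overline{W_{k_0}}$ the two continuous functions $\tilde{g}_{i,j}(\cdot,\varepsilon_{i,j}^k)$ and $g_{i,j}$ are uniformly bounded and $\varepsilon_{i,j}^k$ is bounded below by a positive constant, so the ratio $\lvert \tilde{g}_{i,j}-g_{i,j}\rvert/\varepsilon_{i,j}^k$ stays finite. For $k\geq k_0$ the triangle bound above yields $\lvert \tilde{g}_{i,j}(x,\varepsilon_{i,j}^k) - g_{i,j}(a_j)\rvert \leq C_{i,j}'\,\varepsilon_{i,j}^k$, and combining with the Lipschitz bound on $g_{i,j}$ and $\varepsilon_{i,j}^k<\varepsilon$ gives the desired estimate $\lvert \tilde{g}_{i,j}(x,\varepsilon)-g_{i,j}(x)\rvert \leq c_{i,j}\,\varepsilon$, completing the proof.
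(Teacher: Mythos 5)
Your argument is correct and yields the stated bound, but it takes a different route from the paper for the core estimate. The paper compares $\tilde g_{i,j}$ first with the tangent extension $\bar g_{i,j}$ of \eqref{eq:solver_tangent_extension_g}: since $\tilde g_{i,j}$ and $\bar g_{i,j}$ differ only between the two tangency points, their gap is bounded by the diameter of the smoothing window, $\sqrt{\left(\phi_i(a_j-\varepsilon_{i,j}^k)-\phi_i(a_j+\varepsilon_{i,j}^k)\right)^2+(2\varepsilon_{i,j}^k)^2}\le 2\varepsilon\sqrt{1+L_i^2}$, and then $|g_{i,j}-\bar g_{i,j}|\le 2M_i(\varepsilon_{i,j}^k)^2$ follows from the a.e.\ bound $M_i=\|\phi_i''\|_{L^\infty}$; summing over $j$ gives $\mu_i$. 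You instead compare $\tilde g_{i,j}$ directly with the constant value $g_{i,j}(a_j)$, bounding the arc piece by containment in the triangle spanned by the two tangent lines and their intersection $V$. That works, but the detour through the slope gap $|m_1-m_2|$, the index $k_0$, and the compactness argument for finitely many $k$ is unnecessary: by the construction of the window via \cref{lem:locally_tangent_extension} and \cref{alg:app_solver_btwindow}, the intersection point $x_0$ (your $V$) already lies inside $(a_j-\varepsilon_{i,j}^k,\,a_j+\varepsilon_{i,j}^k)$, so the whole triangle sits in a strip of width $2\varepsilon_{i,j}^k$ whose height is controlled by the Lipschitz constant alone, with no lower bound on $|m_1-m_2|$ needed. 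Two small slips in your write-up are harmless but worth fixing: you cannot "pass to a subsequence" of $\{\varepsilon_{i,j}^k\}$, since that sequence is fixed by the construction of $\tilde g_{i,j}$ (you do not need to --- the gap estimate holds for all $k\ge k_0$ anyway); and in the finitely-many-$k$ case the relevant compact set is $\overline{W_k}$ (or the largest window $\overline{W_1}$), not $\overline{W_{k_0}}$, since for $k<k_0$ the difference is supported on the larger window. What each approach buys: the paper's proof is shorter and produces an explicit constant $2(M_i\varepsilon_{i,j}^k+\sqrt{1+L_i^2})$, at the price of invoking the second-derivative bound; your proof needs only Lipschitz information once the redundant slope-gap machinery is removed, but as written it is longer and leans on a case split the construction already makes superfluous.
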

    \begin{proof}
      For $\varepsilon \in [ \varepsilon_{i,j}^k,   \varepsilon_{i,j}^{k-1})$, we have $\tilde g_{i,j}(x,\varepsilon) = \tilde g_{i,j}(x, \varepsilon_{i,j}^k) $ and 
      \begin{equation*}
        \qty|\tilde g_{i,j}(x,\varepsilon) - \bar g_{i,j}(x)| 
        \le \sqrt{\qty(\phi_i(a_j-\varepsilon_{i,j}^k)-\phi_i(a_j+\varepsilon_{i,j}^k))^2+(2\varepsilon_{i,j}^k)^2} 
        \le 2\varepsilon\sqrt{1+L_i^2},
      \end{equation*}
      where $L_i$ is the Lipschitz constant of $\phi_i$.
      Since $\phi_i$ is piecewise $C^2$ with derivative bounded almost everywhere, it follows that
      \begin{equation*}
      \qty|g_{i,j} - \bar g_{i,j}|\le 2 M_i\left(\varepsilon_{i,j}^k\right)^2,
      \end{equation*} 
      where $M_i = \left\|\phi_i'' \right\|_{L^\infty}$. 
      Combining the above inequalities yields
      \begin{equation*}
        \qty|\tilde g_{i,j} - g_{i,j}|\le 2\qty(M_i \varepsilon_{i,j}^k + \sqrt{1+L_i^2})\varepsilon,
      \end{equation*}
      and
      \(
          \left|{\tilde \phi_i(x,\varepsilon)- \phi_i(x)}\right|
          \le \sum_{j=1}^{m_i} \qty|\tilde g_{i,j} - g_{i,j}|
          \le \mu_i\varepsilon,
      \)
      where $\mu_i$ is independent of $\varepsilon$.
    \end{proof}


\section{Numerical experiments}
\label{sec:experiments}
  The objectives of this section are twofold. First, to verify the theoretical convergence rates established for the fully discrete parabolic problems with hysteresis; and second, to demonstrate the efficiency and robustness of the proposed arc-based smoothing Newton solver in comparison with the existing solvers summarized in \cref{sec:app_solvers}.
  All simulations are performed using our in-house finite element code built upon the libMesh library \cite{libMeshPaper}.   
 
  \subsection{Convergence rate validation} 
    We consider a model problem on $\Omega = (0,1)^n$ with linear play operators as pointwise hysteresis relation. The operators are defined in \eqref{eq:preli_Op_linear_play} with $b = -a =\frac{1}{2}$, $c=2$, and $w^0 = 0$.
    Specifically, for $d = 1,2,3$, the following representative cases are examined:
    \begin{equation*}
      \begin{aligned}
        (\mathrm{Case\:2d-1}) \quad
        &\begin{cases}
          \frac{\partial }{\partial t} u  - \Delta u + \mathcal{W}\qty(u, w^{0}) = 0 \quad\;\;\: \text{in}\; \Omega \times (0,T), \\
          \left. u\right|_{t=0} = u^0,
          \left. u\right|_{\partial \Omega} = g,
        \end{cases} \\ 
        (\mathrm{Case\:2d}) \quad
        &\begin{cases}
          \frac{\partial }{\partial t} \qty[u+\mathcal{W}\qty(u, w^{0})]  - \Delta u = 0\quad \text{in}\; \Omega \times (0,T), \\
          \left. u\right|_{t=0} = u^0,
          \left. u\right|_{\partial \Omega} = g.
        \end{cases}
      \end{aligned}
    \end{equation*}
    The parameters, together with the initial and boundary conditions, are summarized in \cref{tab:exp_err_rate_params}, where
    \begin{equation*}
      g_0(t) =2 t \sin (2\pi t),\quad  h(x)=x\left(\frac{1}{2}-x\right)(1-x).
    \end{equation*}

    \begin{table}[htbp]
      \centering
      \caption{Parameters and data.}
      \label{tab:exp_err_rate_params}
      \begin{tabular}{ccccccccc}
      \toprule
      $d$ & Case & $T$ & $K_\mathrm{ref}$ & $N_\mathrm{ref}$ & $K_\mathrm{init}$ & $N_\mathrm{init}$ & $g$ & $u^0$\\\midrule
      \multirow{2}{*}{$ 1$} & 1 & 4.9 & \multirow{2}{*}{262144}& \multirow{2}{*}{${32768}$} & \multirow{2}{*}{$ 256$}  & \multirow{2}{*}{$ 32$}  & \multirow{2}{*}{$ g_0$} & \multirow{2}{*}{$ 0$}\\
       & 2 & 5   &  & & & & & \\ \midrule
      \multirow{2}{*}{$ 2$} & 3 & 0.5 & 5120 & \multirow{2}{*}{${640}$} &  10  & {$5$}  & \multirow{2}{*}{$ (x-\frac{1}{2}) g_0$} & \multirow{2}{*}{$  10^3h(x)h(y)$}  \\
       & 4 & 3   & 7680 & & 40& 10 & & \\ \midrule
       \multirow{2}{*}{$ 3$} & 5 & 1 & 2000 & \multirow{2}{*}{${100}$} &  \multirow{2}{*}{${10}$}  & \multirow{2}{*}{$5$}  & \multirow{2}{*}{$ (x-\frac{1}{2}) g_0$} & \multirow{2}{*}{$  10^4h(x)h(y)h(z)$}  \\
       & 6 & 3   & 900 & & &  & & \\ 
      \bottomrule
      \end{tabular}
    \end{table}

    \begin{figure}[htbp]
      \centering
      \begin{subfigure}[b]{0.49\textwidth}
          \includegraphics[width=\textwidth]{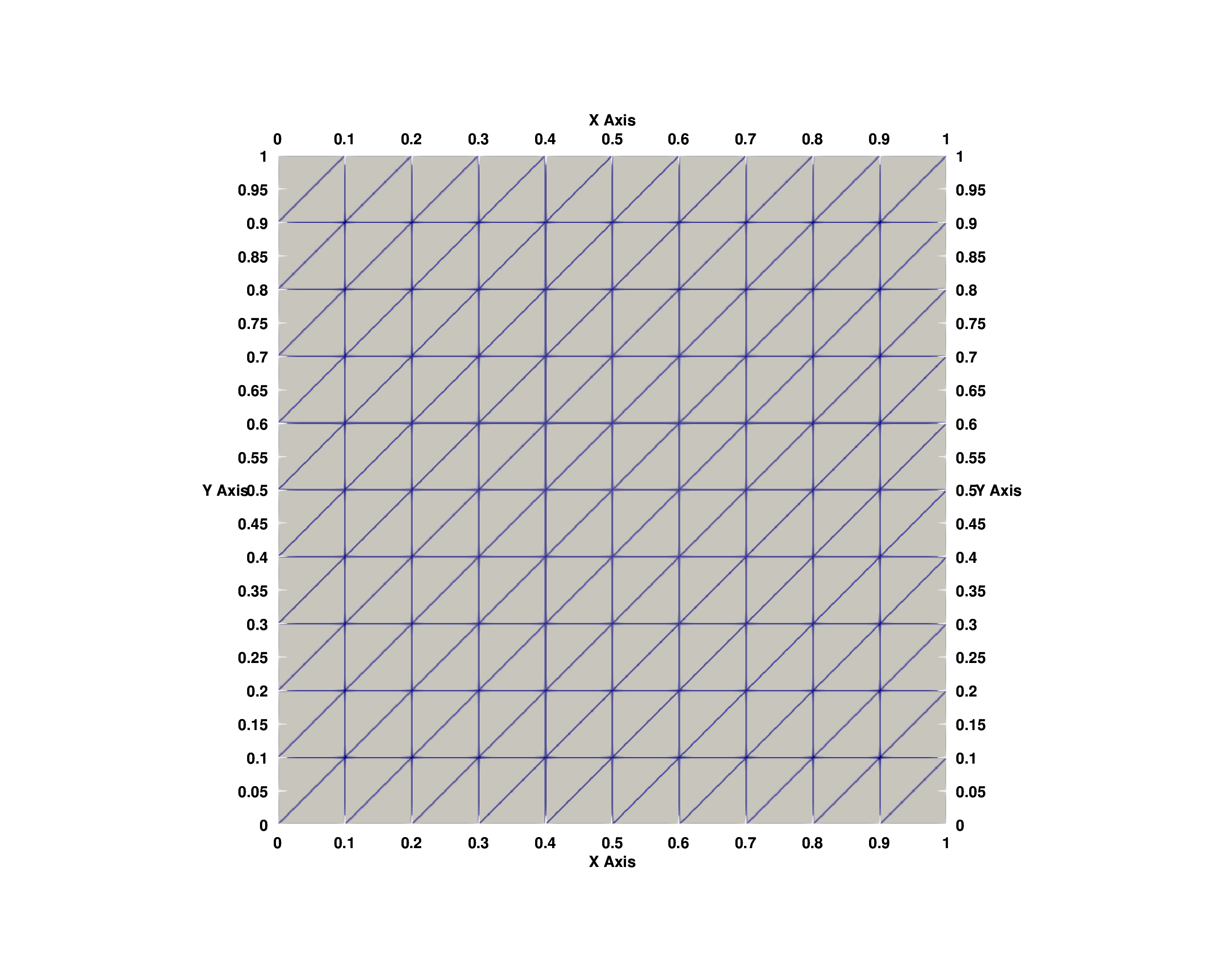}
          \caption{}
          \label{fig:solver_exp_2d_mesh_N10} 
      \end{subfigure}%
      ~
      \begin{subfigure}[b]{0.49\textwidth}
          \includegraphics[width=\textwidth]{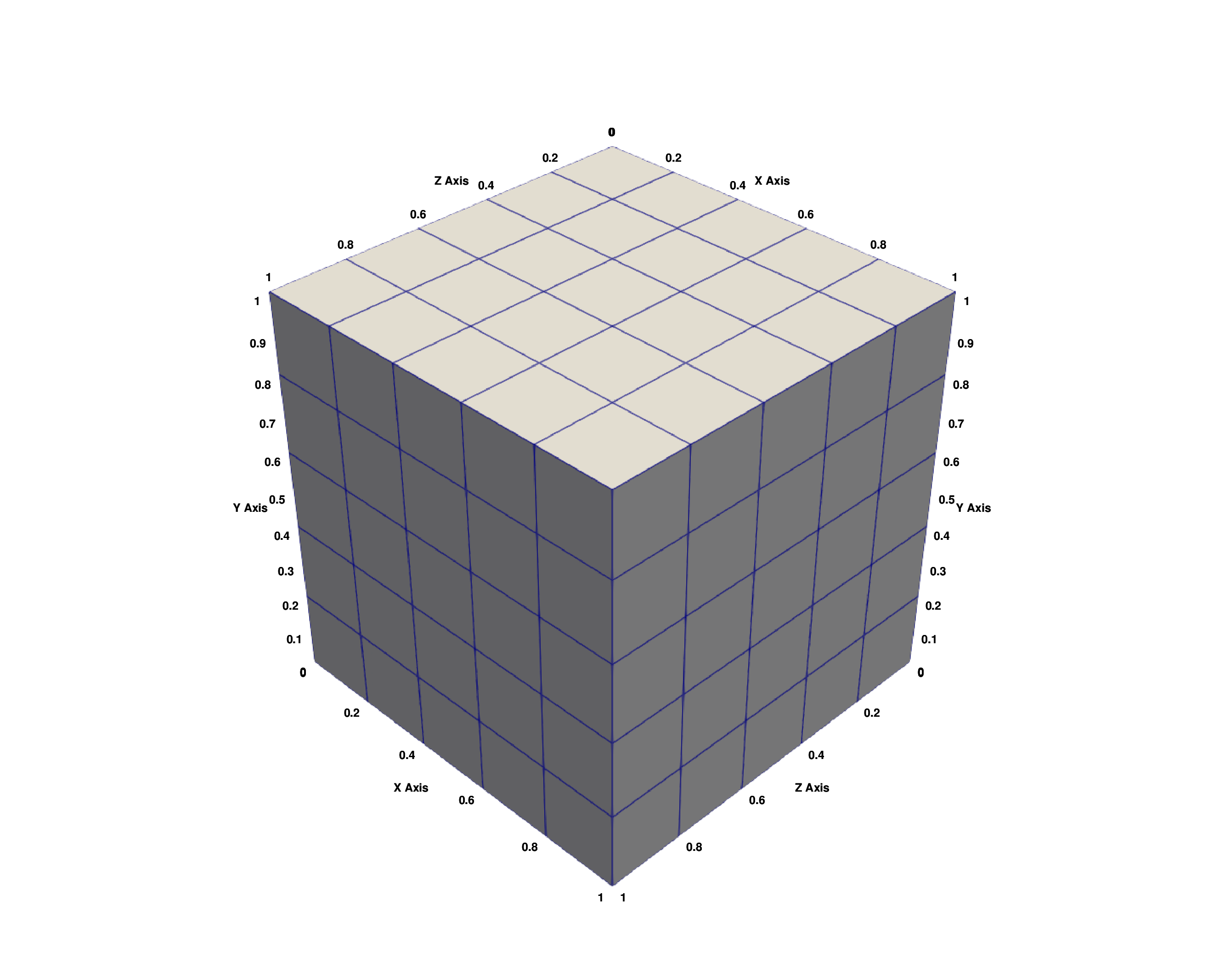}
          \caption{} 
          \label{fig:solver_exp_3d_mesh_N40}
      \end{subfigure} 
      \caption{Initial meshes for convergence verification. (a) Two dimensional mesh ($N_{\mathrm{init}}=10$). (b) Three dimensional mesh ($N_{\mathrm{init}}=5$).}
      \label{fig:DFN_exp_mesh_init} 
    \end{figure}

    The computational domain is discretized using initially uniform meshes with grid size $h = 1/N_\mathrm{init}$ (see \cref{fig:DFN_exp_mesh_init}). 
    The temporal grid employs a uniform step size $\tau = T/K_\mathrm{init}$, and both space and time resolutions are refined uniformly by levels $R_h$ and $R_\tau$, respectively, starting from zero.
    Since the exact analytical solution is unavailable, a reference solution is obtained by computing on a highly refined mesh with $h = 1/N_\mathrm{ref}$ and a very small time step $\tau_\mathrm{ref} = T/K_\mathrm{ref}$.
    
    For a fixed time step $\tau = \tau_\mathrm{ref}$, the spatial discretization errors and the corresponding convergence rates at $t = T$ are reported in \cref{tab:exp_err_h}.
    The observed $H^1$-norm convergence behavior fully agrees with the theoretical predictions, confirming the optimal rate of the schemes. 
    In the $L^2$-norm, although no rigorous theory guarantees such behavior, the numerical results nonetheless exhibit optimal-order convergence, indicating a higher-than-predicted rate.
    \Cref{tab:exp_err_tau} presents the temporal errors and convergence rates with respect to $\tau$, evaluated at $t = T$ with $h = 1/N_\mathrm{ref}$ fixed. 
    Both the semilinear and quasilinear problems demonstrate $\mathcal{O}(\tau)$ convergence, despite the fact that only $\mathcal{O}(\tau^{1/2})$ convergence is expected for the quasilinear cases.

    \begin{table}[htbp]
      \centering
      \footnotesize
      \caption{Error and convergence order for $h$.}\label{tab:exp_err_h}
      \setlength{\tabcolsep}{1pt}
      \renewcommand{\arraystretch}{1.2}
      \begin{subtable}{\textwidth}
          \centering
          \subcaption{$\norm{u(\cdot,t_K) - u_h^k}_{H^1\qty(\Omega)}$}
          \label{tab:exp_err_h_H1}
          \begin{tabular}{ccccccccccccc}
              \toprule
              $R_h$ & Case 1 & Order & Case 2 & Order & Case 3   &Order& Case 4   & Order & Case 5   &Order& Case 6   & Order \\\midrule
              0 & 1.40E-01 &   -  & 1.07E-01 &   -    & 7.85E-01 &   - & 8.11E-01 &   -  & 1.52E-01 & -    & 1.29E+00 & -\\
              1 & 7.01E-02 & 1.00 & 5.49E-02 & 0.96   & 3.93E-01 & 1.00& 4.18E-01 & 0.95 & 7.81E-02 & 0.96 & 6.77E-01 & 0.93\\
              2 & 3.50E-02 & 1.00 & 2.79E-02 & 0.97   & 1.97E-01 & 1.00& 2.11E-01 & 0.99 & 3.89E-02 & 1.01 & 3.40E-01 & 1.00\\
              3 & 1.75E-02 & 1.00 & 1.40E-02 & 1.00   & 9.82E-02 & 1.00& 1.05E-01 & 1.01 & - & - & - & - \\
              \bottomrule
          \end{tabular}                        
      \end{subtable} 
      \\
      \begin{subtable}{\textwidth}
          \centering
          \subcaption{$\norm{u(\cdot,t_K) - u_h^k}_{L^2\qty(\Omega)}$}
          \label{tab:exp_err_h_L2}
          \begin{tabular}{ccccccccccccc}
              \toprule
              $R_h$ & Case 1 & Order & Case 2 & Order & Case 3   & Order & Case 4 & Order  & Case 5   &Order& Case 6   & Order\\
              \midrule
              0 & 1.77E-03 & -    & 9.70E-03 &   -  & 8.79E-03 & -    & 2.78E-02 &   -  & 8.04E-03 & -    & 8.00E-02 & - \\
              1 & 4.42E-04 & 2.00 & 2.15E-03 & 2.17 & 2.09E-03 & 2.07 & 7.28E-03 & 1.93 & 2.12E-03 & 1.92 & 2.08E-02 & 1.95 \\
              2 & 1.10E-04 & 2.00 & 5.84E-04 & 1.88 & 5.44E-04 & 1.94 & 1.88E-03 & 1.95 & 5.29E-04 & 2.00 & 5.28E-03 & 1.97 \\
              3 & 2.75E-05 & 2.00 & 1.43E-04 & 2.03 & 1.32E-04 & 2.05 & 4.69E-04 & 2.01 & -  & - & - & - \\
              \bottomrule
          \end{tabular}                
      \end{subtable}
    \end{table}

    \begin{table}[htbp]
      \centering
      \footnotesize
      \caption{Error and convergence order for $\tau$.}\label{tab:exp_err_tau}
      \setlength{\tabcolsep}{1pt}
      \renewcommand{\arraystretch}{1.2}
      \begin{subtable}{\textwidth}
          \centering
          \subcaption{$\norm{u(\cdot,t_K) - u_h^k}_{H^1\qty(\Omega)}$}
          \label{tab:exp_err_tau_H1}
          \begin{tabular}{ccccccccccccc}
              \toprule
              $R_\tau$ & Case 1 & Order & Case 2 & Order & Case 3   & Order & Case 4 & Order & Case 5   &Order& Case 6   & Order\\
              \midrule
              0 & 5.95E-01 & -    & 2.71E-02 &   -  & 6.71E-01 & -    & 5.72E-01 & -    & 6.30E-02 & -& 7.64E-03 & -\\
              1 & 3.01E-01 & 0.98 & 1.39E-02 & 0.97 & 3.38E-01 & 0.99 & 2.89E-01 & 0.99 & 2.85E-02 & 1.15& 3.48E-03 & 1.13\\
              2 & 1.51E-01 & 0.99 & 7.06E-03 & 0.97 & 1.69E-01 & 1.00 & 1.45E-01 & 0.99 & 1.32E-02 & 1.11& 1.62E-03 & 1.10\\
              3 & 7.57E-02 & 1.00 & 3.53E-03 & 1.00 & 8.45E-02 & 1.00 & 7.19E-02 & 1.02 & 6.27E-03 & 1.08& 7.71E-04 & 1.08\\
              \bottomrule
              \end{tabular}                        
      \end{subtable}
      \begin{subtable}{\textwidth}
          \centering
          \subcaption{$\norm{u(\cdot,t_K) - u_h^k}_{L^2\qty(\Omega)}$}
          \label{tab:exp_err_tau_L2}
          \begin{tabular}{ccccccccccccc}
              \toprule
              $k$ & $R_h=1$ & $R_h=2$ & $R_h=3$ & Order & Case 3   & Order & Case 4 & Order & Case 5   &Order& Case 6   & Order \\
              \midrule
              0 & 1.79E-01 &    - & 3.66E-03 & -    & 1.97E-01 & -    & 7.05E-02 & -   & 1.56E+00 & - &1.69E-01 & -\\
              1 & 9.06E-02 & 0.98 & 1.88E-03 & 0.96 & 9.94E-02 & 0.99 & 3.60E-02 & 0.97& 1.07E+00 & 0.55 &1.14E-01 & 0.57 \\
              2 & 4.55E-02 & 0.99 & 9.56E-04 & 0.98 & 5.00E-02 & 0.99 & 1.83E-02 & 0.98& 5.11E-01 & 1.06 &5.70E-02 & 1.00 \\
              3 & 2.27E-02 & 1.00 & 4.77E-04 & 1.00 & 2.50E-02 & 1.00 & 9.12E-03 & 1.01& 2.50E-01 & 1.03 &2.82E-02 & 1.01 \\
              \bottomrule
          \end{tabular}                
      \end{subtable}
    \end{table}

  \subsection{Solver performance}
    We consider a three-dimensional model problem formulated as
    \begin{equation*}
      \begin{cases}
        \frac{\partial }{\partial t} \qty[u+\mathcal{W}\qty(u, w^{0})]  - \Delta u = f\quad \text{in}\; \Omega \times (0,T), \\
        \left. u\right|_{t=0} = 0, \,  w^0 =0, \, 
        \left. u\right|_{\partial \Omega} = 0,
      \end{cases}
    \end{equation*}
    where $\Omega = [0,1]^3$, $T=1$ and $f = 2000\sin(3 \pi t)$. 
    The constitutive relation is described by the Preisach hysteresis operator \eqref{eq:def_preisach} whose distribution function adopts a factorized Lorentzian form with respect to $\rho_1 = \sigma -r$ and $\rho_2 = \sigma + r$, i.e.,
    \begin{equation*}
    \omega(r,\sigma) =\frac{N}{2} \qty( 1 + \qty(\frac{ \sigma+r-\mu}{\gamma\mu})^2)^{-1} \qty(1 + \qty(\frac{\sigma-r+\mu}{\gamma\mu})^2)^{-1} ,  
    \end{equation*}
    where $N = 0.080422$, $\gamma = 0.27382$ and $\mu = 91.24317$. 
    For the numerical implementation in \cref{alg: jacobian_smoothing}, the parameters are chosen as $\rho = \alpha = \eta = 10^{-1}$, $\gamma = 10$, $\sigma = 10^{-4}$, and $\mu = 10^{2}$, with the initial guess $x_0$ set to the solution vector from the previous time step. The discrete grid parameters are set to $h=1/60$ and $\tau=1/320$. 

    To validate the implementation of the Preisach model, we follow the benchmark proposed in \cite{benabou_comparison_2003}. 
    The input $u$ is first increased from the negative saturation state to $u=308.672$ and then returned to $u=0$; the resulting memory configuration is subsequently used as the initial state for validation (indicated by the red dots in \cref{fig:exp_preisach}).

    \begin{figure}
      \centering
      \begin{subfigure}[b]{0.49\textwidth}
          \includegraphics[width=\textwidth]{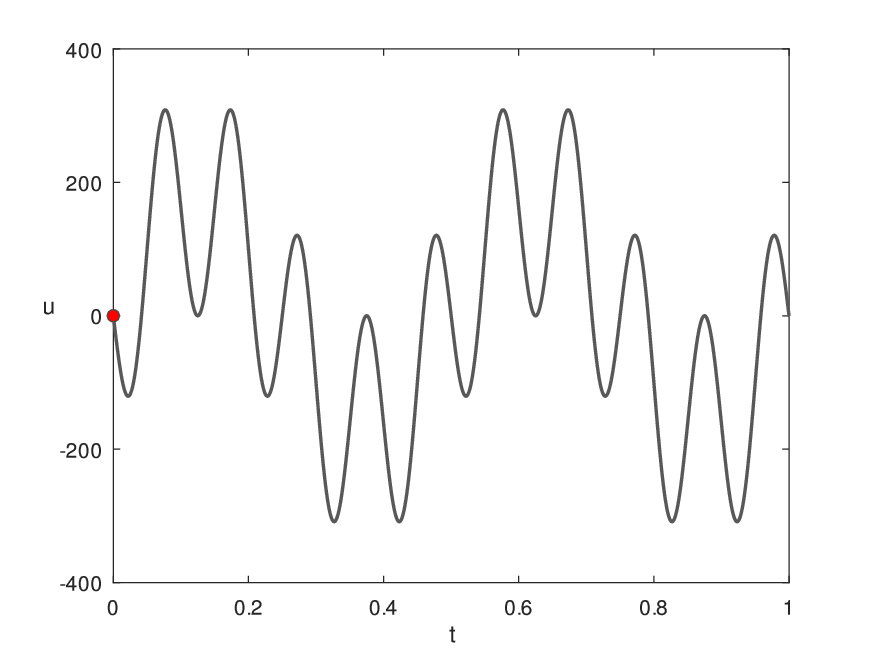}
          \caption{}
          \label{fig:solver_exp_preisach_input}  
      \end{subfigure}%
      ~
      \begin{subfigure}[b]{0.49\textwidth}
          \includegraphics[width=\textwidth]{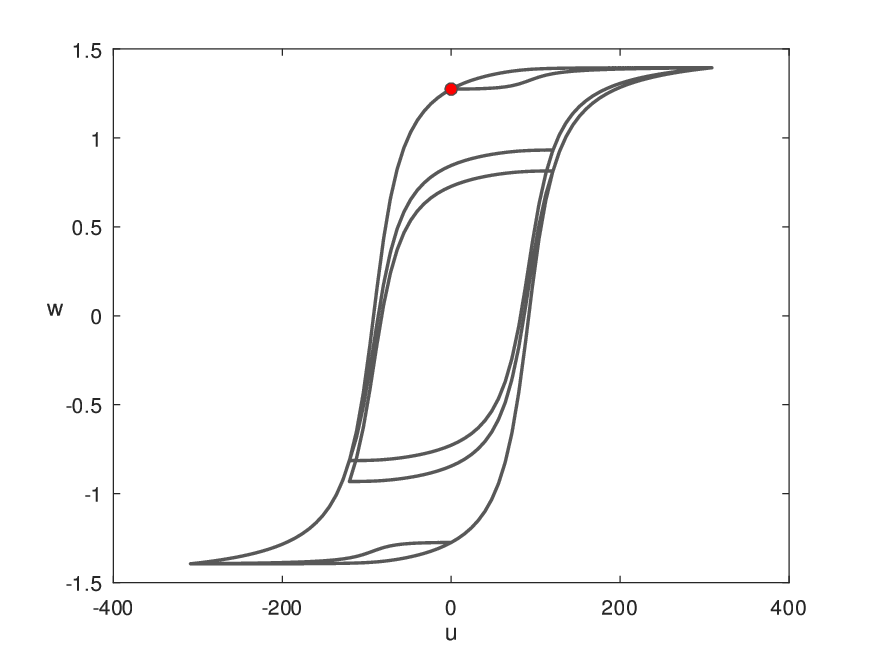}
          \caption{} 
          \label{fig:solver_exp_preisach_HB}
      \end{subfigure} 
      \caption{Benchmark for the Preisach model. (a) Excitation: $H(t) = 170\sin(4\pi t) + 170\sin(20\pi t + \pi)$. (b) Input-output $u$-$w$ curve.}
      \label{fig:exp_preisach} 
    \end{figure}
    \begin{table}
      \centering
      \small
      \setlength{\tabcolsep}{4pt}
      \renewcommand{\arraystretch}{1.2}
      \begin{tabular}{cc*{4}{c}}
          \toprule
          Solver&	Time(s)&	Nonlinear Its	&Linear Its	&Func Eval	&Jac Eval \\
          \midrule
          Fixed Point ($\beta = 0$)&	79.94 &	35&	4782&	36&	1 \\
          
          Dual Iteration ($\beta =0$, $\lambda = 1$)&	68.89&	17&	955&	18&	1 \\
           
          Smoothing Newton  &	13.25&	3&	362&	4&	3\\
        \bottomrule
      \end{tabular}   
      \caption{\enspace Computational efficiency comparison.}   
      \label{tab:solver_effeciency}     
    \end{table}
    \begin{figure}[!htbp]  
      \centering
      \begin{subfigure}[b]{0.45\textwidth} \label{subfig:residual_cmp}
          \includegraphics[width=\textwidth]{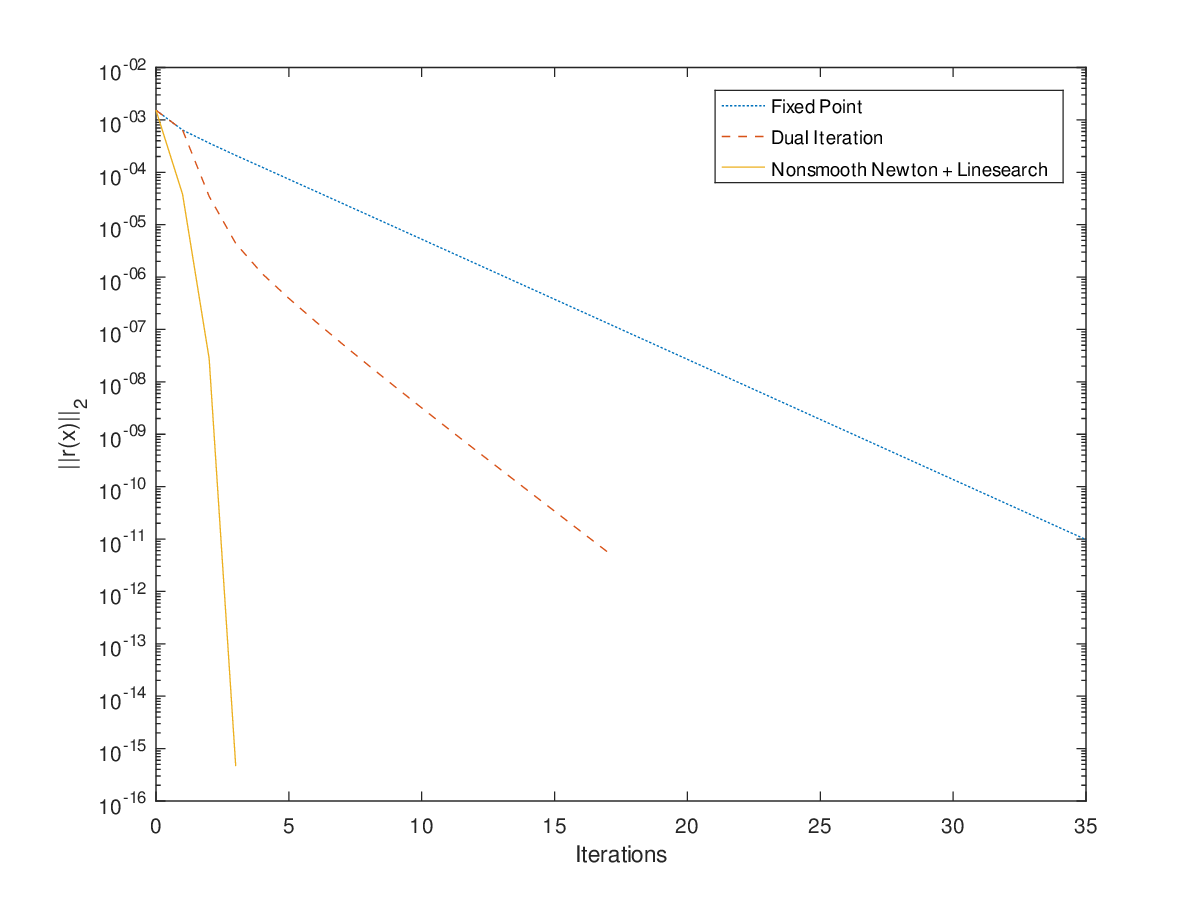}
          \caption{}
      \end{subfigure}
      \qquad
      \begin{subfigure}[b]{0.45\textwidth} \label{subfig:solver_iterations_times}
          \centering
          \includegraphics[width=\textwidth]{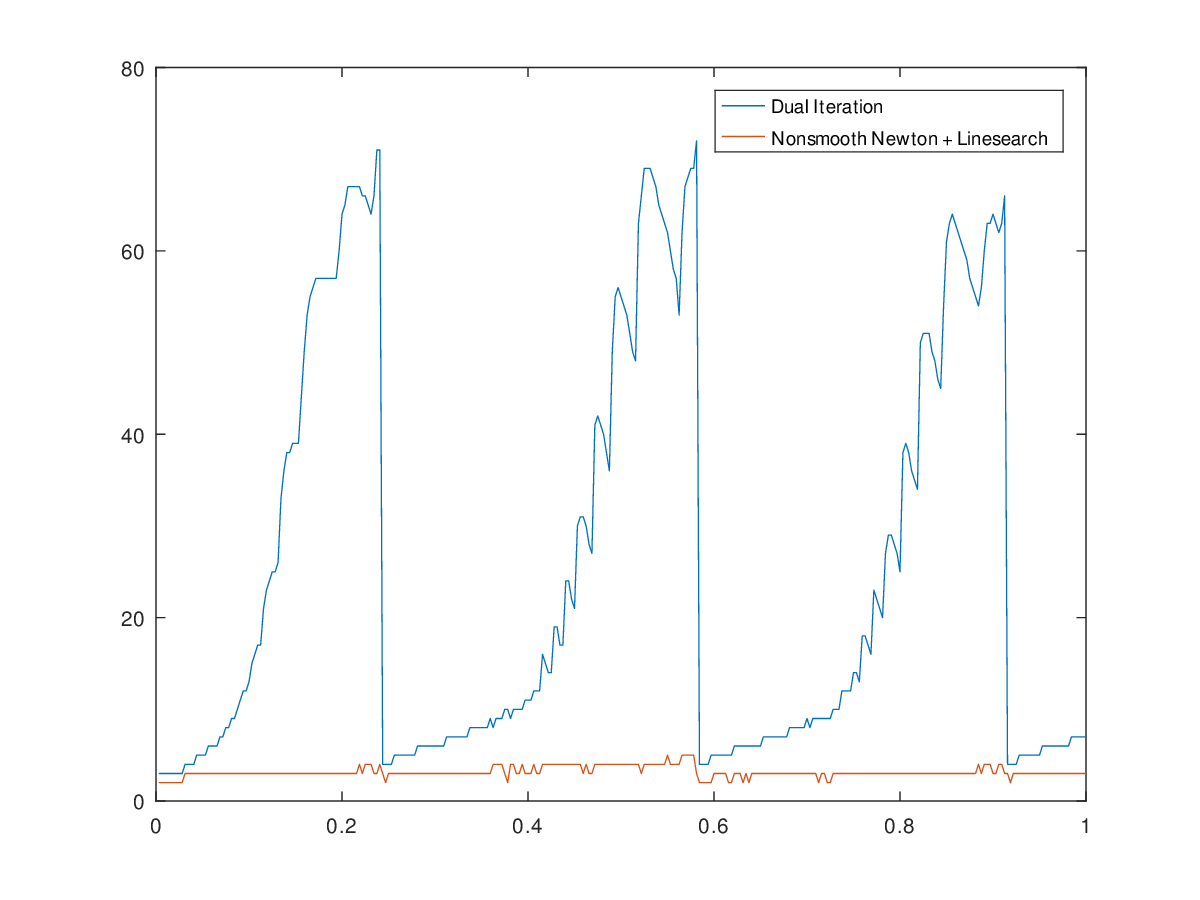}
          \caption{}
      \end{subfigure}
      \caption{\enspace Convergence comparison. (a) Residual at $t_{35}=0.109375$. (b) Outer iterations for the entire transient problem. }
      \label{fig:solver_iterations_times}
    \end{figure}

    We first examine the single-step problem to assess the convergence efficiency of different algorithms at $t_{35}=0.109375$, using the residual norm $\norm{r_k}_2 < 10^{-11}$ as the convergence criterion. 
    As shown in \cref{tab:solver_effeciency}, the smoothing Newton method with backtracking line search clearly outperforms the other competitors in terms of wall time, nonlinear iterations, and inner linear solves. 
    Since the Jacobian reconstruction cost in \eqref{eq:newton_jacobian} is low, its overall efficiency is further enhanced. 
    Figure~\ref{fig:solver_iterations_times}(a) illustrates the locally quadratic convergence of the smoothing Newton method. 
    The dual iteration algorithm improves upon the fixed-point scheme by introducing an alternating correction process, which enhances its convergence rate.

    Next, we compare solver performance for the full nonstationary problem. 
    Figure~\ref{fig:solver_iterations_times}(b) shows that the damped smoothing Newton method remains robust, fast, and largely insensitive to parameter choices. 
    In contrast, when the parameters are fixed, the dual iteration solver may exhibit deteriorated convergence during the input-turning periods—i.e., the time intervals where the input of the hysteresis operator changes its monotonicity. 
    Such behavior can be inferred from the variation of the source term $f$ in this example. 
    For clarity, the fixed-point solver is omitted from \cref{fig:solver_iterations_times}(b) due to its extremely slow convergence.




\appendix
\section{Tangent extension}\label{sec:app_tangent_extension}
  \begin{definition}[Tangent extension]\label{def:app_tangent_extension}
    Let $f\colon D \subset \mathbb{R} \to \mathbb{R}$, where $D$ is an open set containing two distinct points $a$, $b$ with $a<b$. Suppose that the right derivative $f'_+(a)$ and the left derivative $f'_-(b)$ exist.  
    If the tangents to $f$ at $x=a$ and $x=b$ either coincide or intersect at some point $x_0 \in [a,b]$, then $f$ is said to be tangent extendable on the closed interval $[a,b]$.  
    Define the local tangent lines $f_a(x)=f'_+(a)(x-a)+f(a)$ and $f_b(x)=f'_-(b)(x-b)+f(b)$. Then the function
    \begin{equation*}
    \bar{f}(x):=\begin{cases}
    f_a(x),\ &\text{if}\ x\in [a,x_0], \\
    f_b(x),\ &\text{if}\ x\in [x_0,b], \\
    f(x),\   &else, \\
    \end{cases}
    \end{equation*}
    is called the tangent extension of $f$ on $D\cup[a,b]$.
  \end{definition}
    
  \begin{proposition}\label{prop:tangent_extension}
    Let $f\colon D \subset \mathbb{R} \to \mathbb{R}$ be defined on an open set containing $a$, $b$ with $a<b$.  
    Then $f$ is tangent extendable on $[a,b]$ if and only if
    \begin{equation}\label{neq:tangent_intersection}
    \min\qty{f'_+(a),f'_-(b)} \le \frac{f(b)-f(a)}{b-a} \le \max\qty{f'_+(a),f'_-(b)}.
    \end{equation}
  \end{proposition}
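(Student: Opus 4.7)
The plan is to prove the equivalence by direct computation of the intersection abscissa of the two tangent lines, followed by a short case analysis on whether the endpoint slopes coincide. Write $m := f'_+(a)$, $n := f'_-(b)$, and let $s := (f(b)-f(a))/(b-a)$ denote the secant slope. The two tangent lines are $f_a(x) = m(x-a) + f(a)$ and $f_b(x) = n(x-b) + f(b)$, and \cref{def:app_tangent_extension} requires either $f_a \equiv f_b$ or a common point $x_0 \in [a,b]$.

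First I would treat the generic case $m \neq n$. Solving $f_a(x_0) = f_b(x_0)$ yields the unique intersection abscissa
\begin{equation*}
x_0 \;=\; \frac{ma - nb + f(b) - f(a)}{m-n}.
\end{equation*}
A short algebraic rearrangement then gives
\begin{equation*}
x_0 - a \;=\; \frac{(s-n)(b-a)}{m-n}, \qquad b - x_0 \;=\; \frac{(m-s)(b-a)}{m-n},
\end{equation*}
so the requirement $a \le x_0 \le b$ is equivalent to asking both fractions to be nonnegative. Since $b-a > 0$, this is precisely the condition that $s-n$ and $m-s$ have the same sign as $m-n$, which in turn says $s$ lies between $m$ and $n$; that is, $\min\{m,n\} \le s \le \max\{m,n\}$. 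The boundary cases $x_0 = a$ and $x_0 = b$ correspond to the equalities $s = n$ and $s = m$, matching the weak inequalities in \eqref{neq:tangent_intersection}.

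Next I would dispose of the degenerate case $m = n$. Then $f_a$ and $f_b$ are parallel, so they are tangent-extendable iff they coincide, iff $f(a) + m(b-a) = f(b)$, iff $s = m = n$. This is exactly the case where \eqref{neq:tangent_intersection} collapses to equality on both sides, so the equivalence holds here as well. Combining the two cases yields the proposition.

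There is no real obstacle here beyond careful bookkeeping; the only point that deserves attention is making sure the weak inequalities in \eqref{neq:tangent_intersection} correctly absorb the boundary cases $x_0 \in \{a,b\}$ and the parallel-lines limit $m=n$, which is immediate from the formulas above.
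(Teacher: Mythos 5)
Your proposal is correct and follows essentially the same route as the paper's proof: compute the intersection abscissa of the two tangent lines and observe that its membership in $[a,b]$ is equivalent to the secant slope lying between the one-sided endpoint slopes. In fact you are slightly more careful than the paper, since you explicitly handle the parallel case $f'_+(a)=f'_-(b)$ and the boundary cases $x_0\in\{a,b\}$ corresponding to the weak inequalities, which the paper's brief argument leaves implicit.
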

    
  \begin{proof}
    Let the tangents at $x=a$ and $x=b$ be given by $f_a(x)=f'_+(a)(x-a)+f(a)$ and $f_b(x)=f'_-(b)(x-b)+f(b)$. If $f'_+(a)\neq f'_-(b)$, these two lines intersect at
    \begin{equation*}
      \begin{cases}
        x_0=&\frac{1}{2}(a+b)+ \frac{f(b)-f(a)-\frac{1}{2}(b-a)\qty(f'_+(a)+f'_-(b))}{f'_+(a)-f'_-(b)},\\ y_0=&f(a) + f'_+(a)\frac{f(b)+(a-b)f'_-(b)-f(a)}{f'_+(a)-f'_ -(b)}. 
      \end{cases} 
    \end{equation*} 
    The condition $x_0\in(a,b)$ is equivalent to \eqref{neq:tangent_intersection}, proving the claim.
  \end{proof}

  \begin{lemma}\label{lem:locally_tangent_extension}
    Let $f\colon \mathbb{R} \to \mathbb{R}$ be a $PC^2$ function.  
    If $f'$ is discontinuous at a point $x_d$, then there exists $\varepsilon>0$ such that for every $\delta \in (0,\varepsilon)$, the function $f$ is tangent extendable on the interval $[x_d-\delta,\,x_d+\delta]$.
  \end{lemma}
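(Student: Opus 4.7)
The plan is to reduce the claim to an application of \cref{prop:tangent_extension} by verifying the chord-slope condition \eqref{neq:tangent_intersection} for all sufficiently small symmetric intervals around $x_d$. Because $f$ is $PC^2$ and $f'$ has a jump at $x_d$, on a one-sided neighborhood of $x_d$ the function $f$ coincides with two $C^2$ branches meeting continuously at $x_d$. More precisely, there exist $\varepsilon_0>0$ and $C^2$ functions $f_L,f_R$ defined on $(x_d-\varepsilon_0,x_d+\varepsilon_0)$ such that $f=f_L$ on $(x_d-\varepsilon_0,x_d]$, $f=f_R$ on $[x_d,x_d+\varepsilon_0)$, $f_L(x_d)=f_R(x_d)$, and $m_L:=f_L'(x_d)\neq f_R'(x_d)=:m_R$. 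Without loss of generality, assume $m_L<m_R$.

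For $\delta\in(0,\varepsilon_0)$ set $a=x_d-\delta$ and $b=x_d+\delta$, so that $f'_+(a)=f_L'(x_d-\delta)$ and $f'_-(b)=f_R'(x_d+\delta)$. By the $C^2$-regularity of $f_L,f_R$ at $x_d$, the Taylor expansions give
\begin{equation*}
f_L(x_d-\delta)=f(x_d)-m_L\delta+O(\delta^2),\qquad f_R(x_d+\delta)=f(x_d)+m_R\delta+O(\delta^2),
\end{equation*}
so that the chord slope satisfies
\begin{equation*}
\frac{f(b)-f(a)}{b-a}=\frac{f_R(x_d+\delta)-f_L(x_d-\delta)}{2\delta}=\frac{m_L+m_R}{2}+O(\delta).
\end{equation*}
Similarly, $f_L'(x_d-\delta)=m_L+O(\delta)$ and $f_R'(x_d+\delta)=m_R+O(\delta)$.

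Since $m_L<m_R$, we have $m_L<\tfrac{m_L+m_R}{2}<m_R$, and both inequalities are strict. By continuity of the three expressions in $\delta$, there exists $\varepsilon\in(0,\varepsilon_0)$ such that for every $\delta\in(0,\varepsilon)$,
\begin{equation*}
f_L'(x_d-\delta)\;<\;\frac{f(x_d+\delta)-f(x_d-\delta)}{2\delta}\;<\;f_R'(x_d+\delta),
\end{equation*}
which is precisely the condition \eqref{neq:tangent_intersection} (the case $m_L>m_R$ is symmetric). Applying \cref{prop:tangent_extension} on $[x_d-\delta,x_d+\delta]$ yields the tangent extendability of $f$ on this interval, completing the proof.

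I do not anticipate a serious obstacle: the only delicate point is ensuring that the chord slope lies \emph{strictly} between the two one-sided derivatives for small $\delta$, but this is immediate from the fact that the limiting value $(m_L+m_R)/2$ is the midpoint of two \emph{distinct} numbers $m_L,m_R$, so the strict comparison survives the $O(\delta)$ perturbations.
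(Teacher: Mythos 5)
Your proof is correct, and it reaches the same target—verifying the chord-slope condition \eqref{neq:tangent_intersection} and then invoking \cref{prop:tangent_extension}—by a slightly cleaner route than the paper. The paper's proof works quantitatively: it introduces the jump size $d = f'_-(x_d) - f'_+(x_d)$, the Lipschitz constant $L$ of $f'$ on a one-sided neighborhood, fixes an explicit window $\varepsilon = \min\{\varepsilon_0, \tfrac{2}{3}\tfrac{d}{L}\}$, and deduces the two chord inequalities using the mean value theorem together with three auxiliary estimates. Your proof instead observes that the centered chord slope $\frac{f(x_d+\delta)-f(x_d-\delta)}{2\delta}$ tends (by second-order Taylor expansion of the two $C^2$ branches) to the midpoint $(m_L+m_R)/2$, which lies strictly between the distinct one-sided derivatives $m_L$ and $m_R$; continuity then yields a (non-quantitative) $\varepsilon$. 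The paper's version has the minor advantage of producing an explicit lower bound on the admissible window size—potentially useful for the backtracking detection in \cref{alg:app_solver_btwindow}—while your argument is shorter, requires less bookkeeping, and makes the mechanism more transparent. Both are sound.
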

    
  \begin{proof}
    Without loss of generality, assume $f'_-(x_d)>f'_+(x_d)$ and denote $d=f'_-(x_d)-f'_+(x_d)$. 
    Since $f\in PC^{2}$, there exists $\varepsilon_0>0$ such that \[\left. f\right|_{[x_d-\varepsilon_0,x_d]}, \left. f\right|_{[x_d, x_d+\varepsilon_0]}  \in C^2,\] and for all $\delta_1, \delta_2, \delta_3 \in (0,\varepsilon_0)$,
    \begin{gather}
      f'(x_d-\delta_1)-f'(x_d+\delta_2)>\frac{d}{3}, \label{eq:solver_discontinuity_gap} \\ 
      f(x_d-\delta_3)-f(x_d)+\delta f'(x_d-\delta_3) \ge -\frac{1}{2}L\delta_3^2, \label{eq:solver_tangent_error_lb}\\
      f(x_d+\delta_3)-f(x_d)-\delta f'(x_d+\delta_3) \ge -\frac{1}{2}L\delta_3^2, \label{eq:solver_tangent_error_ub}
    \end{gather}
    where $L$ is the Lipschitz constant of $f'$ on $(x_d-\varepsilon_0,\,x_d+\varepsilon_0)$.
    Set $\varepsilon = \min\!\qty{\varepsilon_0, \tfrac{2}{3}\tfrac{d}{L}}$.  
    For any $\delta \in (0,\varepsilon)$, the mean value theorem yields  
    \[
      f(x_d+\delta) - f(x_d) = f'(\xi_+)\delta, \quad \xi_+ \in (x_d,x_d+\delta).
    \]
    and then by \eqref{eq:solver_discontinuity_gap} and \eqref{eq:solver_tangent_error_lb} we have
    \begin{equation*}
      \begin{split}
        &f'(x_d-\delta)\delta + \qty[f(x_d-\delta)- f(x_d)+\delta f'(x_d-\delta)] \\ 
    \ge& \left(f'(\xi_+) + \frac{d}{3}\right)\delta - \frac{1}{2}L\delta^2 \\ 
      =&f(x_d+\delta) - f(x_d) +\qty(\frac{d}{3} - \frac{1}{2}L\delta)\delta  \\ 
    \ge& f(x_d+\delta) - f(x_d). 
      \end{split} 
    \end{equation*}
    Thus,
    \begin{equation*} 
      \frac{f(x_d+\delta)-f(x_d-\delta)}{2\delta} \le f'(x_d-\delta). 
    \end{equation*} 
    Similarly, applying  \eqref{eq:solver_discontinuity_gap}, \eqref{eq:solver_tangent_error_ub} and the mean value theorem  gives
    \begin{equation*} 
      f'(x_d+\delta)\delta - \qty[f(x_d+\delta)- f(x_d)-\delta f'(x_d+\delta)] \le f(x_d) - f(x_d-\delta),
    \end{equation*}
    and hence 
    \begin{equation*}
      \frac{f(x_d+\delta)-f(x_d-\delta)}{2\delta} \ge f'(x_d+\delta).
    \end{equation*}
    Therefore, condition \eqref{neq:tangent_intersection} holds, and by Proposition~\ref{prop:tangent_extension}, the function $f$ is tangent extendable on $[x_d-\delta,\,x_d+\delta]$.
  \end{proof}

  \cref{lem:locally_tangent_extension} ensures that one can always detect a tangent-extendable window around a derivative discontinuity by means of backtracking; see \cref{alg:app_solver_btwindow}.
       
  \begin{algorithm}
    \caption{Backtracking window detection for tangent extension}
    \label{alg:app_solver_btwindow}
    \begin{algorithmic}[1]
    \REQUIRE A $PC^2$ function $f\colon \mathbb{R} \to \mathbb{R}$, a discontinuity point $x_d$, an initial half-window width $\delta^0>0$ such that $f'$ is discontinuous only at $x_d$ in $[x_d-\delta^0,\,x_d+\delta^0]$, and a contraction factor $\alpha\in(0,1)$.
    \ENSURE Half-window width $\delta$.
    \STATE $\delta \gets \delta^0$
    \FOR{$n = 1, 2, \dots$}
        \STATE $d_{\max} \gets \max\!\qty{f'_+(x_d-\delta),\,f'_-(x_d+\delta)}$
        \STATE $d_{\min} \gets \min\!\qty{f'_+(x_d-\delta),\,f'_-(x_d+\delta)}$
        \IF{$d_{\min} \le \dfrac{f(x_d+\delta)-f(x_d-\delta)}{2\delta} \le d_{\max}$}
            \STATE \textbf{break}
        \ENDIF
        \STATE $\delta \gets \alpha\,\delta$
    \ENDFOR
    \RETURN $\delta$
    \end{algorithmic}
  \end{algorithm}

\section{Existing solvers}\label{sec:app_solvers}

  \subsection{Fixed-point iteration algorithm}
    To separate the linear and nonlinear parts of $F(u)$, we introduce $F^\beta(u) = F(u) - \beta u$, $\beta\ge0$. Then $F(u) = \beta u + F^\beta(u)$ and the model equation \eqref{eq:solver_model} can be rewritten as
    \begin{equation}
    (A+\beta I) u + F^\beta(u) = f.
    \label{eq:app_solver_fp_split}
    \end{equation}
    This naturally leads to the following fixed-point iteration scheme:
    \begin{equation}\label{eq:app_solver_fixedpoint}
    (A+\beta I) u^{n+1} = f - F^\beta(u^n). 
    \end{equation}
    The main advantage of \eqref{eq:app_solver_fixedpoint} is that the coefficient matrix of the linear system remains constant when 
    $\beta$ is fixed. Consequently, matrix factorization or the construction of a suitable preconditioner only needs to be performed once, significantly reducing computational overhead.

  \subsection{Dual iteration algorithm}
    Let $F^\beta_\lambda(u) = \frac{u - J^\beta_\lambda(u)}{\lambda}$, where $J^\beta_\lambda = (I+\lambda F^\beta)^{-1}$ denotes the resolvent of $F^\beta(u)$. Combining \eqref{eq:app_solver_fp_split} with the identity
    \begin{equation*}
    F^\beta(u) = F^\beta_\lambda(u+\lambda F^\beta(u)),
    \end{equation*}
    the following iterative scheme can be constructed for \eqref{eq:solver_model}:
    \begin{equation*}
      \begin{cases}
      (A+\beta I) u^{n+1} = f - q^{\beta,n}, \\
      q^{\beta,n+1} = F^\beta_\lambda(u^{n+1}+\lambda q^{\beta,n}),
      \end{cases}
      \label{eq:parabolic_solver_dual_iter}
    \end{equation*}
    where each iteration involves solving a linear system and $n$ nonlinear scalar equations.

\section*{Acknowledgments}
The authors would like to thank the reviewers and editors for their suggestions that helped to improve the paper.
The computations were done on the high performance computers of State Key Laboratory of Scientific and Engineering Computing, Chinese Academy of Sciences.

\bibliographystyle{siamplain}
\bibliography{Bib/app.bib,Bib/constitution.bib,Bib/pde.bib,Bib/ref.bib,Bib/fem.bib}
\end{document}